\newcommand{\jump}[1]{[\![ #1 ]\!]}
\newcommand{\llangle}{\langle\!\langle}
\newcommand{\rrangle}{\rangle\!\rangle}
\newcommand{\triple}[1]{|\!|\!| #1 |\!|\!|}
\newcommand{\bff}[1]{\mathbf{#1}}
\newcommand{\bs}[1]{\boldsymbol{#1}}
\newcommand{\Rd}{{\mathbb{R}^d}}
\newcommand{\RdG}{{\mathbb{R}^d\setminus\Gamma}}
\newcommand{\C}{\mathbb C}
\newcommand{\td}{\mathrm{TD}}
\renewcommand{\SS}{{\mathrm S}}
\newcommand{\DD}{{\mathrm D}}
\newcommand{\VV}{{\mathrm V}}
\newcommand{\WW}{{\mathrm W}}
\newcommand{\KK}{{\mathrm K}}
\newcommand{\II}{{\mathrm I}}
\newcommand{\bigO}{\mathcal{O}}
\newtheorem{proposition}{Proposition}[section]
\newtheorem{theorem}[proposition]{Theorem}
\numberwithin{equation}{section}
\newcommand{\includetikz}[1]{\tikzexternalenable \tikzsetnextfilename{#1}  {\include{figures/#1}} \tikzexternaldisable}
\newcommand{\includetikz}[1]{\includegraphics{figures_pdf/#1}}
\title{Time-domain boundary integral equation modeling of heat transmission problems}
\author{Tianyu Qiu\footnote{Department of Mathematical Sciences, University of Delaware, Newark DE 19716, USA.  (E-mail: \href{mailto:qty@udel.edu}{\texttt{qty@udel.edu}})} \and
Alexander Rieder\footnote{Institut f\"ur Analysis und Scientific Computing, TU Wien,  A-1040 Vienna, Austria.  (E-mail: \href{mailto:alexander.rieder@tuwien.ac.at}{\texttt{alexander.rieder@tuwien.ac.at}}) Funded by the Austrian Science Fund(FWF) (grant W1245)} \and
Francisco--Javier Sayas\footnote{Department of Mathematical Sciences, University of Delaware, Newark DE 19716, USA.  (E-mail: \href{mailto:fjsayas@udel.edu}{\texttt{fjsayas@udel.edu}}). Partially funded by NSF (grant DMS 1216356)}\and
Shougui Zhang\footnote{College of Mathematics Science, Chongqing Normal University, People's Republic of China.  (E-mail: \href{shgzhang9621@sina.com}{\texttt{shgzhang9621@sina.com}}). Supported by China Scholarship Council}}
\date{\today}
\begin{document}

\maketitle

\begin{abstract}
This paper investigates the numerical modeling of a time-dependent heat transmission problem by the convolution quadrature boundary element method. It introduces the latest theoretical development into the error analysis of the numerical scheme. Semigroup theory is applied to obtain stability in spatial semidiscrete scheme. Functional calculus is employed to yield convergence in the fully discrete scheme. In comparison to the traditional Laplace domain approach, we show our approach gives better estimates. \\
{\bf AMS Classification.} {65N38, 65N12, 65N15, 80M15}.\\
{\bf Keywords.} heat equation, boundary integral equations, functional calculus, semigroup theory.
\end{abstract}

\section{Introduction}

Since the inception of the boundary integral equation method, the thermal engineering community has been exploiting its potential in solving transient heat conduction problems \cite{WrBr1979}. The method is also a popular choice among environmental scientists in the study of pollutant transport problem \cite{LiLi1983}. Recent applications include photothermal spectroscopy \cite{RaSa2007} and diffusion in variable media \cite{AbCaMa2013, AlRaWrSk2012}. The method enjoys sustained interest due to its remarkably simple way to handle problems on infinite domains. 

Various schemes have emerged to discretize time domain boundary integral equations associated to parabolic problems. The theoretical basis for much of this work originated with \cite{ArNo1989,Costabel1990} and focused on the development of Galerkin discretization of the basic integral equations for problems on the exterior of a bounded domain. While we will focus on a particular class of methods (Galerkin in space, Convolution Quadrature in time), let us mention that there is a large literature on other families of schemes applied to exterior problems for the heat equation \cite{GrLi2000, Tausch2007, Tausch2009, MeScTa2015, MeScTa2014}.

Our context is that of a Convolution Quadrature Boundary Element Method applied to transmission problems for the heat equation. Methods combining CQ and Galerkin BEM for heat diffusion problems appeared first in \cite{LuSc1992} (a combination of the ideas of CQ first set in \cite{Lubich1988}, with the then recent results on the single layer operator for the heat equation) and then reinterpreted as a Rothe-type method in \cite{ChKr1997}. From the point of view of the algorithm itself, we here combine a Costabel-Stephan formulation for transmission problems \cite{CoSt1985, QiSa2014}, with Galerkin semidiscretization in space and multistep or multistage CQ in time \cite{Lubich1988, LuOs1993, Banjai2010}. We set our main goals in the proof of stability and convergence of the method avoiding Laplace domain estimates (more on this later), using instead techniques of evolutionary equations associated to infinitesimal generators of strongly continuous analytic semigroups in certain Hilbert spaces. We obtain three kinds of results: (a) long term stability of the system after semidiscretization in space; (b) optimal order of convergence in time (with explicitly expressed behavior of the bounds with respect to the time variable) for BDF-based CQ schemes; (c) reduced (but higher than stage) order of convergence for RK-based CQ schemes. 

Let us next try to clarify what kind of mathematical techniques are used for our analysis and where the novelty of this work lies. To do that, we first need to discuss the mathematical background for the field. The modern analysis for time domain boundary integral equations (TDBIE) traces its roots back to the seminal paper \cite{BaHa1986}, where the bounds for boundary integral operators and layer potentials of the Helmholtz equation are derived by carrying out inversion using a Plancherel formula in anisotropic Sobolev spaces. In the regime of parabolic equations, the paper \cite{LuSc1992} converts the Laplace domain results to the time domain by bounding the inverse Laplace transform with pseudodifferential calculus, a theoretical tool not available for non-smooth domain problems. As already mentioned, \cite{ArNo1989} and \cite{Costabel1990} contain the seeds of a space-and-time coercivity analysis for the thermal boundary integral operators using anisotropic Sobolev spaces, while \cite{LuSc1992} adopts a separate strategy for the space and time variables, focusing on Sobolev regularity in space and H\"older regularity in time.
Working on a different parabolic problem (Stokes flow around a moving obstacle),  \cite{BaHaHsSa2015} gave an alternative proof of the bounds applicable to non-smooth boundaries, improving past results by revealing how all constants depend on time. Note that the usual approach was the study of mapping properties for the TDBIE and its inverse operator and for the associated potentials. Some kind of Laplace-domain coercivity was used to justify space or space-and-time Galerkin discretization. Instead, the work of \cite{LaSa2009} understood Galerkin semidiscretization in space as part of the problem set-up and its effects were analyzed as a continuous problem, instead of as a discretization of an existing continuous problem. The time-domain translation of this approach (using any variant of Laplace inversion or Payley-Wiener estimates) typically yields estimates that are suboptimal, due to the passage through the Laplace domain. (This effect can be seen for the TDBIE associated to the wave equation in \cite{Sayas2016}.) 

What we do in this paper is related to the purely time-domain analysis of TDBIE initiated in \cite{DoSa2013} for wave propagation problems. This theory has seen different extensions and refinements: for instance, \cite{QiSa2015} extends the results to the Maxwell equations and \cite{HaQiSaSa2015} is the realization that a first order in time formulation makes the analysis much simpler. The goal of exploring purely time-domain techniques is multiple. First of all, in comparison with the Laplace domain approach, and when compared on the same type of bounds (the Laplace domain also provides estimates in weighted Sobolev norms that are not available with other techniques), time-domain estimates provide: (a) lower needs for the regularity of the input data to obtain the same estimates (i.e., refined mapping properties); (b) better bounds for the estimates as time grows. The time-domain analysis also emphasizes that a dynamical process is occurring through the entire discretization problem, a process that is hidden when we focus on transfer function estimates directly attached to the boundary integral operators. 

The previous two paragraphs dealt with integral formulations, mapping properties, and semidiscretization in time. We now turn our attention to Convolution Quadrature. The multistep version of CQ, considered as a method to approximate causal convolutions and convolution equations, originated in \cite{Lubich1988}. A multistage version of the method was derived only a couple of years later in \cite{LuOs1993}. CQ techniques are now widely used in the realm of TDBIE, especially for wave propagation phenomena \cite{MoScSt2011, BaLaSa2015, LiMoWe2015}, but they are also useful in the context of TDBIE for parabolic problems \cite{LuSc1992, BaHaHsSa2015}. The Laplace domain analysis of CQ has a black-box nature that makes it very attractive: it deals with general families of operators as long as their Laplace transforms (transfer functions) satisfy certain properties. However, as already observed in the seminal work of Lubich, the CQ process applied to TDBIE can be rewritten as the application of a background ODE solver to the associated PDE in the exterior domain. In fact, rewriting the CQ process as a time-stepping procedure expressed through $\zeta$-transforms puts into evidence the fact that we are approximating a non-standard evolutionary PDE with non-homogeneous boundary conditions using an ODE solver. Along those lines, this paper offers two non-trivial contributions. First of all, we use functional calculus techniques and classical analysis of BDF methods \cite{Thomee2006} to show a direct-in-time analysis of BDF-CQ methods applied to the semidiscrete system of TDBIE. Second, we borrow heavily on difficult results by Alonso and Palencia \cite{AlPa2003} to offer an analysis of RK-CQ methods applied to the same problem. While we can prove estimates that improve the basic stage order (which is what you could obtain with direct Laplace domain analysis \cite{LuOs1993, BaLu2011, BaLuMe2011}), our numerical experiments will show that we are still slightly suboptimal and some additional work is needed.

The paper is organized as follows. Section~\ref{sect:model_problem} introduces the time domain boundary integral equation formulation (TDBIE) for the heat equation transmission problem. Section~\ref{sect:galerkin} proves the stability and convergence of the Galerkin-semidiscretization-in-space scheme. Sections~\ref{sect:multistep} and~\ref{sec:6} prove the convergence of BDFCQ and RKCQ in respectively. Finally, Section~\ref{sect:numerics} provides several numerical experiments. An appendix presents some needed background material to ease readability.

\paragraph{Notation.} For Banach spaces $X$ and $Y$, $\mathcal{B}(X,Y)$ will be used to denote the space of bounded linear operators from $X$ to $Y$. We use standard function space notations: $\mathcal C^k(I;X)$ for the space of $k$ times continuously differentiable functions of a real variable in the interval $I$ with values in the Banach space $X$, $L^2(\mathcal O)$ for the space of square integrable functions on a domain $\mathcal O$, and the Sobolev spaces
\begin{alignat*}{6}
 H^1(\mathcal O) & := \{ f\in L^2(\mathcal O) \,:\, \nabla f\in L^2(\mathcal O)^d\}, \\
 H_\Delta^1(\mathcal O)&:= \{ f\in H^1(\mathcal O)\,:\, \Delta f \in L^2(\mathcal O)\}.
\end{alignat*}
If $\Gamma$ is the boundary of a Lipschitz domain,  $H^{1/2}_\Gamma$ will be the trace space, $H^{-1/2}_\Gamma$ its dual, and $\langle \cdot, \cdot \rangle_\Gamma$ will denote the duality product of $H_\Gamma^{-1/2}\times H_\Gamma^{1/2}$. We will use the following convention
\begin{equation}\label{eq:00}
\|\,\cdot\,\|_{\mathcal O}, \qquad \|\,\cdot\,\|_{1,\mathcal O},
\qquad \| \,\cdot\,\|_{1/2,\Gamma}, \qquad \|\,\cdot\,\|_{-1/2,\Gamma},
\end{equation}
for the norms of the spaces $L^2(\mathcal O)$, $H^1(\mathcal O)$, $H^{1/2}_\Gamma$ and $H^{-1/2}_\Gamma$ respectively. We will not have a special notation for the natural norm of $H^1_\Delta(\mathcal O)$. We will use the same notation \eqref{eq:00} for the norms on Cartesian products of several copies of the same spaces.
Finally, we will denote $\mathbb R_+:=[0,\infty)$.

\section{Model problem and TDBIE Formulation}
\label{sect:model_problem}

We are concerned with a transmission problem for the heat equation in free space in presence of a single homogeneous inclusion.
 Both the inclusion and the free space medium possess homogeneous and isotropic thermal transmission properties, characterized by two positive constants: $\kappa$ as the thermal conductivity and $\rho$ as the density scaled by heat capacity. Let $\Omega_-\subset \Rd(d=2,3)$ be a bounded Lipschitz domain with boundary $\Gamma$. The normal vector field $\nu:\Gamma\to\Rd$ is defined almost everywhere on the boundary, pointing from the interior $\Omega_-$ to the exterior domain $\Omega_+:=\Rd\setminus \overline{\Omega_-}$. We can thus define two trace operators $\gamma^\pm:H^1(\RdG)\to H^{1/2}_\Gamma$, two normal derivative operators $\partial_\nu^\pm:H^1_\Delta(\RdG)\to H^{-1/2}_\Gamma$ and the jumps
\[
\jump{\gamma u}:=\gamma^-u-\gamma^+u,
	\qquad
\jump{\partial_\nu u}:=\partial_\nu^-u-\partial_\nu^+u.
\]
Given $\beta_0:[0,\infty)\to H^{1/2}_\Gamma$, and $\beta_1:[0,\infty)\to H^{-1/2}_\Gamma$, we look for $u:[0,\infty) \to H_\Delta^1(\mathbb R^d\backslash\Gamma)$ satisfying
\begin{subequations} \label{eqn:1}
\begin{alignat}{4}
\rho \dot{u} (t) &= \kappa\Delta u (t) &&\quad \text{ in } \Omega_-, &\quad \forall t\geq 0, \\
\dot{u}(t) &= \Delta u(t) &&\quad \text{ in } \Omega_+, &\quad \forall t\geq 0,\\
\gamma^- u(t) - \gamma^+ u(t) &= \beta_0(t) &&\quad \text{ on } \Gamma, &\quad \forall t \geq 0,\\
\kappa \partial_\nu^- u (t) - \partial_\nu^+ u(t) &=\beta_1(t) &&\quad \text{ on } \Gamma, &\quad \forall t \geq 0,\\
u(0) &= 0,
\end{alignat}
\end{subequations}
where upper dots denote differentiation in time.

We next give a crash course (based on \cite{Sayas2016}) on the few ingredients that are needed to have a rigorous setting for the weak definition of the heat boundary integral operators applied in the sense of distributions. Let $\mathrm F:\mathbb C_+:=\{s\in \mathbb C\,:\,\mathrm{Re}\,s>0\} \to X$ be an analytic function such that
\begin{equation}\label{eq:CCC1}		
\| \mathrm F(s)\|_{X}\leq C_{\mathrm F}(\mathrm{Re} \,s) |s|^\mu	\quad \forall s\in\mathbb{C}_+,		
\end{equation}
where $C_{\mathrm F}:(0,\infty)\to (0,\infty)$ is non-increasing and is allowed to blow-up as a rational function at the origin, i.e., there exists a constant $C>0$ and $\ell\geq 0$ such that $C_{\mathrm F}(\sigma)\leq C\sigma^{-\ell}$ when $\sigma\to 0$. It is then possible to prove \cite[Chapter 3]{Sayas2016} that there exists an $X$-valued causal tempered distribution  $f$ whose Laplace transform is $\mathrm F$,  i.e., $\mathcal L\{ f\}=\mathrm F$. The precise set of distributions whose Laplace transforms satisfy the above conditions is described in \cite[Chapter 3]{Sayas2016} and denoted $\td(X)$. These Laplace transforms (symbols, or transfer functions) include the ones that appear in parabolic problems (see \cite{BaHaHsSa2015}) where now $\mathrm F$ is well defined and analytic in $\mathbb C_\star:=\mathbb C\setminus (-\infty,0]$ and satisfies
\begin{equation}\label{eq:CCC2}
\| \mathrm F(s)\|_{X}\leq D_{\mathrm F}(\mathrm{Re} \,s^{1/2}) |s|^\mu	\quad \forall s\in\mathbb{C}_\star,
\end{equation}
where $D_{\mathrm F}:(0,\infty)\to (0,\infty)$ has the same properties as $C_{\mathrm F}$ above. Since
\[
\min\{ 1,\mathrm{Re}\, s^{1/2}\} \ge \min\{1,\mathrm{Re}\,s\} \qquad \forall s\in \mathbb C_+,
\]
a symbol satisfying \eqref{eq:CCC2} satisfies \eqref{eq:CCC1} with $C_{\mathrm F}(\sigma):=D_{\mathrm F}(\min\{1,\sigma\})$. Therefore, if $\mathrm F$ satisfies \eqref{eq:CCC2}, it is the Laplace transform of a causal distribution.

Let $s\in \mathbb C_\star$, $\phi\in H^{1/2}_\Gamma$, and $\lambda \in H^{-1/2}_\Gamma$. The transmission problem
\begin{alignat*}{6}
U\in H^1(\RdG),  \qquad & \Delta U-s U=0 \qquad &\mbox{in $\RdG$},\\
				  \jump{\gamma U}=\phi, \qquad& \jump{\partial_\nu U}=\lambda,&
\end{alignat*}
is equivalent to
\begin{subequations}\label{eq:GG1}
\begin{alignat}{6}
U\in H^1(\RdG), &\quad \jump{\gamma U} = \phi,\\
&\quad (\nabla U,\nabla V)_{\RdG} + s(U,V)_{\Rd} = \langle \lambda,\gamma V\rangle_\Gamma\quad \forall V\in H^1(\Rd),
\end{alignat}
\end{subequations}
and therefore admits a unique solution. To see that, note that the associated bilinear form is coercive as
\[
\mathrm{Re}\left(\overline s^{1/2} \left( 
	\| \nabla U\|_{\RdG}^2+ s \| U\|_{\RdG}^2\right)\right)
 =(\mathrm{Re}\, s^{1/2}) \left (\|\nabla U\|_\RdG^2+ |s| \| U\|_\RdG^2\right).
\]
This solution of \eqref{eq:GG1} can be expressed using two $s-$dependent bounded operators acting on the data:
\[
U=\SS(s)\lambda-\DD(s)\phi.
\]
This gives a simultaneous variational  definition of the single and double layer heat potentials in the Laplace domain.
By definition,
\[
\jump{\gamma}\SS(s)=0, 
	\qquad 
\jump{\partial_\nu}\SS(s) = \II,
	\qquad
\jump{\gamma}\DD(s)=-\II,
	\qquad
\jump{\partial_\nu}\DD(s)=0.
\]
We can now define the four associated boundary integral operators by taking averages of the traces and normal derivatives of the single and double layer potentials:
\begin{alignat*}{6}
& \VV(s):=\gamma^\pm \SS(s),
	&\qquad
		& \KK(s):=\tfrac12(\gamma^-+\gamma^+)\DD(s),\\
& \KK^T(s):=\tfrac12(\partial_\nu^-+\partial_\nu^+)\SS(s),
	&\qquad
		&\WW(s):=-\partial_\nu^\pm \DD(s).
\end{alignat*}
Once again by definition, the following limit relations hold:
\[
\partial_\nu^\pm \SS(s)=\mp\tfrac12\II+\KK^T(s),
	\qquad
\gamma^\pm \DD(s)=\pm\tfrac12\II+\KK(s).
\]

%We collect the following bounds from \cite[Theorem 2.2]{HaQiSaSa2015}.
\begin{theorem}\label{th:bounds}
For $s\in \C_+$, denote $\sigma:=\mathrm{Re}\,s^{1/2}>0$ and $\underline\sigma:=\min\{1,\sigma\}$. There exists a constant $C$ only depending on the boundary $\Gamma$ for all $s\in \mathbb C_\star$ such that
\begin{alignat*}{6}
\| \SS(s)\|_{H^{-1/2}_\Gamma \to H^1(\Rd)} &&\leq C \frac{|s|^{1/2}}{\sigma\underline\sigma^2}, \quad &
\| \DD(s)\|_{H^{1/2}_\Gamma \to H^1(\RdG)} &&\leq C\frac{|s|^{3/4}}{\sigma\underline\sigma^{3/2}},\\
\| \VV(s)\|_{H^{-1/2}_\Gamma \to H^{1/2}_\Gamma}  && \leq C \frac{|s|^{1/2}}{\sigma\underline\sigma^2}, \quad &
\| \KK^T(s)\|_{H^{-1/2}_\Gamma \to H^{-1/2}_\Gamma}  && \leq C\frac{|s|^{3/4}}{\sigma\underline\sigma^{3/2}},\\
\| \KK(s)\|_{H^{1/2}_\Gamma \to H^{1/2}_\Gamma} &&\leq C\frac{|s|^{3/4}}{\sigma\underline\sigma^{3/2}}, \quad & \| \WW(s)\|_{H^{1/2}_\Gamma\to H^{-1/2}_\Gamma} && \leq C\frac{|s|}{\sigma\underline\sigma}.
\end{alignat*}
\end{theorem}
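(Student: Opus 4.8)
The plan is to reduce everything to two things: a single coercivity/energy estimate for the solution operator of the transmission problem \eqref{eq:GG1}, and the mapping properties of the trace and normal-derivative operators. First I would estimate the potentials $\SS(s)$ and $\DD(s)$, and then obtain the boundary operators $\VV,\KK,\KK^T,\WW$ by composing with $\gamma^\pm$ and $\partial_\nu^\pm$. Throughout, $\sigma=\mathrm{Re}\,s^{1/2}$, $\underline\sigma=\min\{1,\sigma\}$, and the key algebraic input is the identity already displayed in the excerpt, namely that multiplying the bilinear form of \eqref{eq:GG1} by $\overline{s}^{1/2}$ yields
\begin{equation*}
\mathrm{Re}\Big(\overline{s}^{1/2}\big(\|\nabla U\|_\RdG^2 + s\|U\|_\RdG^2\big)\Big) = \sigma\big(\|\nabla U\|_\RdG^2 + |s|\,\|U\|_\RdG^2\big),
\end{equation*}
so that the natural $s$-weighted energy norm $|||U|||_s^2 := \|\nabla U\|_\RdG^2 + |s|\,\|U\|_\RdG^2$ is controlled coercively.

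For the single layer potential, the variational problem defining $U=\SS(s)\lambda$ is \eqref{eq:GG1} with $\phi=0$; testing with $V=U$, taking real parts after multiplication by $\overline{s}^{1/2}$, and bounding the right-hand side $\langle\lambda,\gamma U\rangle_\Gamma$ by $\|\lambda\|_{-1/2,\Gamma}\,\|\gamma U\|_{1/2,\Gamma}$, I then use boundedness of the trace $\gamma:H^1(\Rd)\to H^{1/2}_\Gamma$ together with the fact that $\|U\|_{1,\Rd}^2 \le \max\{1,|s|^{-1}\}\,|||U|||_s^2 \le \underline\sigma^{-2}|s|^{-1}\cdot(\text{something})$; more precisely one gets $\sigma\,|||U|||_s^2 \lesssim \|\lambda\|_{-1/2,\Gamma}\,\|\gamma U\|_{1/2,\Gamma} \lesssim \|\lambda\|_{-1/2,\Gamma}\,\|U\|_{1,\Rd}$, and since $\|U\|_{1,\Rd}\le \underline\sigma^{-1}|s|^{-1/2}|||U|||_s$ (using $\min\{1,|s|\}\ge\underline\sigma^2$ and $|s|^{1/2}\le|s|$ appropriately), this closes to give $\|U\|_{1,\Rd}=\|\SS(s)\lambda\|_{1,\Rd} \lesssim \tfrac{|s|^{1/2}}{\sigma\underline\sigma^2}\|\lambda\|_{-1/2,\Gamma}$. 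The bound for $\VV(s)=\gamma^\pm\SS(s)$ is then immediate from trace boundedness with the same constant. For the double layer potential $U=-\DD(s)\phi$, the data enters through the constraint $\jump{\gamma U}=\phi$ rather than the right-hand side, so I would lift $\phi$ to a function in $H^1(\Rd\setminus\Gamma)$ (say supported near $\Gamma$) with controlled norm, subtract it off, and reduce to a problem with homogeneous jump but a nonzero right-hand side involving both $\nabla(\text{lift})$ and $s\cdot(\text{lift})$; the appearance of the extra factor $s$ from the zeroth-order term, combined with the $|s|^{1/2}$ already present, is exactly what produces the worse power $|s|^{3/4}$ and the exponent $3/2$ on $\underline\sigma$ after optimizing the lift (balancing the $H^1$ and weighted $L^2$ contributions, which is where the fractional power comes from). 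Then $\KK(s)$ follows from $\gamma^\pm\DD(s)=\pm\tfrac12\II+\KK(s)$, and $\KK^T(s)$ symmetrically from $\partial_\nu^\pm\SS(s)=\mp\tfrac12\II+\KK^T(s)$ after estimating $\partial_\nu^\pm U$.

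The remaining delicate point is the normal derivative, needed both for $\KK^T(s)$ and especially for $\WW(s)=-\partial_\nu^\pm\DD(s)$, because $\partial_\nu^\pm:H^1_\Delta(\RdG)\to H^{-1/2}_\Gamma$ requires control of $\Delta U$ in $L^2$, not just the $H^1$ norm. Here I would use the PDE itself: since $\Delta U = sU$ on each side, $\|\Delta U\|_\RdG = |s|\,\|U\|_\RdG$, which is already controlled by $|s|^{1/2}|||U|||_s$. Combining the $H^1$ bound and this $\Delta$ bound through the standard estimate $\|\partial_\nu^\pm U\|_{-1/2,\Gamma} \lesssim \|U\|_{1,\RdG} + \|\Delta U\|_{\RdG}$ (the continuity of the normal trace, valid on Lipschitz domains), and tracking which term dominates, gives the $\KK^T$ bound at order $|s|^{3/4}$ and, applied to the double layer, the $\WW$ bound at the full order $|s|/(\sigma\underline\sigma)$ — the worst of all six because it combines the double-layer loss with the extra factor $|s|$ from differentiating. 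The main obstacle is bookkeeping: keeping the dependence on $\underline\sigma$ sharp throughout (one must never overestimate $\min\{1,\sigma\}$ by $1$, nor $\min\{1,|s|\}$ by $1$, since those are precisely the small-$s$ blow-up rates being advertised), and choosing the lift of $\phi$ so that its two competing norm contributions scale with matching powers of $|s|$, which is what pins down the fractional exponents $3/4$ and $3/2$ rather than integer ones.
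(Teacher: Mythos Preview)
The paper's own proof is a one--line citation: it simply observes that the equation $\Delta u-su=0$ is the Helmholtz resolvent equation $\Delta u-(s^{1/2})^2u=0$ and invokes Table~1 of Laliena--Sayas (2009), replacing the Helmholtz parameter there by $s^{1/2}$. Your proposal instead attempts to reproduce that table from scratch, which is a legitimate and more self--contained route, and the estimate you give for $\SS(s)$ (modulo a spurious factor $|s|^{-1/2}$ in your inequality for $\|U\|_{1,\Rd}$; the correct intermediate bound is $\|U\|_{1,\Rd}\le C\,\underline\sigma^{-1}|||U|||_s$, which still closes to the stated result) and the lifting argument for $\DD(s)$ are fine.

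There is, however, a genuine gap in your treatment of the normal derivative. The crude bound
\[
\|\partial_\nu^\pm U\|_{-1/2,\Gamma}\;\lesssim\;\|U\|_{1,\RdG}+\|\Delta U\|_\RdG
\]
together with $\Delta U=sU$ does \emph{not} give the exponents you claim. For $U=\SS(s)\lambda$ one has $\|\Delta U\|_\RdG\le |s|^{1/2}|||U|||_s$, so this route yields $\|\partial_\nu^\pm\SS(s)\|\lesssim |s|/(\sigma\underline\sigma)$, which is the $\WW$--type bound, not the $|s|^{3/4}/(\sigma\underline\sigma^{3/2})$ you need for $\KK^T$. For $U=\DD(s)\phi$ the same crude estimate produces $|s|^{5/4}/(\sigma\underline\sigma^{1/2})$, strictly worse than the claimed $|s|/(\sigma\underline\sigma)$ for $\WW$. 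The missing idea is to estimate $\langle\partial_\nu^\pm U,\psi\rangle_\Gamma$ through Green's formula against a Bamberger--HaDuong lift $W_\psi$ of the \emph{test} function, i.e.\ the same optimized lift you already invoke for the Dirichlet datum in the $\DD$ argument, but now used by duality on the Neumann side: $|\langle\partial_\nu^\pm U,\psi\rangle|\le |||U|||_s\,|||W_\psi|||_s$ with $|||W_\psi|||_s\lesssim |s|^{1/4}\underline\sigma^{-1/2}\|\psi\|_{1/2,\Gamma}$. This recovers exactly the stated powers for $\KK^T$ and $\WW$, and is in fact how the cited reference proves them.
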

\begin{proof}
Since we can write the heat equation in Laplace domain as $\Delta u-(s^{1/2})^2 u=0$ for $s^{1/2}\in \mathbb{C}_+$, i.e., $s\in \mathbb C_\star$, $s$ in the estimates in \cite[Table 1]{LaSa2009} can be replaced by $s^{1/2}$.
\end{proof}

Applying the results of \cite[Chapter 3]{Sayas2016}, we can define the operator-valued distributions in the time domain through the inverse Laplace transform, using an inverse diffusivity parameter $m>0$
\begin{align*}
\mathcal S_m :=\mathcal L^{-1}\{\SS (\cdot/m) \} &\in \td(\mathcal B(H^{-1/2}_\Gamma,H^1(\Rd))),\\ 
\mathcal D_m:=\mathcal L^{-1}\{\DD (\cdot/m) \} & \in \td(\mathcal B(H^{1/2}_\Gamma,H^1(\RdG))),\\
\mathcal V_m:=\mathcal L^{-1}\{\VV(\cdot/m) \} & \in \td(\mathcal B(H^{-1/2}_\Gamma,H^{1/2}_\Gamma)), \\
\mathcal K_m:=\mathcal L^{-1}\{\KK(\cdot/m) \} & \in \td(\mathcal B(H^{1/2}_\Gamma,H^{1/2}_\Gamma)),\\
 \mathcal K^T_m:=\mathcal L^{-1}\{\KK^T(\cdot/m) \} & \in \td(\mathcal B(H^{-1/2}_\Gamma,H^{-1/2}_\Gamma)), \\
\mathcal W_m:=\mathcal L^{-1}\{\WW(\cdot/m) \} & \in \td(\mathcal B(H^{1/2}_\Gamma,H^{-1/2}_\Gamma)).
\end{align*}
When $m=1$, the subscript will be omitted. As is well known, convolutions in time correspond to multiplications in the Laplace domain.
For instance, if the Laplace transform of $\lambda \in \td (H^{-1/2}_\Gamma)$ is $\Lambda=\mathcal L\{ \lambda\}$, then $\mathcal L(\mathcal S_m*\lambda)=\SS(s/m)\Lambda (s)$ and $\mathcal S_m* \lambda \in \td(H^1(\Rd))$. The convolution operator $\lambda \mapsto \mathcal S_m * \lambda$ is the heat single layer potential. More details about the distributional convolution can be found in \cite[Section 3.2]{Sayas2016}, with a more general theory given in \cite{Treves1967}. The next theorem is the Green's representation theorem for the heat equation, which is a consequence of the analogous result in the Laplace domain.

\begin{theorem} \label{th:2}
Given $\phi\in \td(H^{1/2}_\Gamma)$ and $\lambda\in \td(H^{-1/2}_\Gamma)$, $u=\mathcal S_m*\lambda-\mathcal D_m*\phi$ is the unique solution to the problem
\begin{subequations} \label{eqn:0}
\begin{alignat}{6} 
& u\in \td(H^1_\Delta(\mathbb R^d\setminus\Gamma)) \qquad & \dot u=m\Delta u,\\
& \jump{\gamma u}=\phi,&\qquad \jump{\partial_\nu u}=\lambda.
\end{alignat}
\end{subequations}
\end{theorem}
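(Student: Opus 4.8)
The plan is to transfer the known Laplace-domain representation to the time domain via the inverse Laplace transform, using the calculus of $\td(X)$-valued distributions from \cite[Chapter 3]{Sayas2016}. First I would take Laplace transforms: with $\Phi := \mathcal L\{\phi\}$ and $\Lambda := \mathcal L\{\lambda\}$, both satisfy bounds of the form \eqref{eq:CCC2} by hypothesis (they lie in $\td$ of the respective spaces), and Theorem~\ref{th:bounds} gives the polynomial-in-$|s|$, rational-at-the-origin bounds on $\SS(s/m)$ and $\DD(s/m)$ on $\mathbb C_\star$. Hence the product $U(s) := \SS(s/m)\Lambda(s) - \DD(s/m)\Phi(s)$ is analytic on $\mathbb C_+$ and satisfies \eqref{eq:CCC1}, so it is the Laplace transform of a causal tempered distribution $u = \mathcal S_m * \lambda - \mathcal D_m * \phi \in \td(H^1(\Rd))$ (indeed with values in $H^1(\RdG)$ on the two sides of $\Gamma$).

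Next I would verify the PDE. For each fixed $s\in\mathbb C_\star$, the function $U(s)$ solves the Laplace-domain transmission problem $\Delta U(s) - (s/m) U(s) = 0$ in $\RdG$, with $\jump{\gamma U(s)} = \Phi(s)$ and $\jump{\partial_\nu U(s)} = \Lambda(s)$: this is exactly the defining property of $\SS(s/m)$ and $\DD(s/m)$ together with the jump relations $\jump{\gamma}\SS = 0$, $\jump{\partial_\nu}\SS = \II$, $\jump{\gamma}\DD = -\II$, $\jump{\partial_\nu}\DD = 0$ recorded above. In particular $\Delta U(s) = (s/m) U(s) \in L^2(\Rd)$, so $U(s) \in H^1_\Delta(\RdG)$ with the expected bound, and therefore $u \in \td(H^1_\Delta(\RdG))$. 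Applying the inverse Laplace transform and using that $\mathcal L\{\dot u\}(s) = s\,U(s)$ (valid for causal distributions) and that $\Delta$ commutes with $\mathcal L^{-1}$ (it is a fixed bounded-below operator acting in the spatial variable, so it passes through the Bromwich integral), the identity $m\,\Delta U(s) = s\,U(s)$ transforms into $m\,\Delta u = \dot u$ in the sense of $\td$-distributions. Likewise the two jump conditions, being $s$-independent continuous linear maps applied to $U(s)$, transform into $\jump{\gamma u} = \phi$ and $\jump{\partial_\nu u} = \lambda$.

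For uniqueness, suppose $w \in \td(H^1_\Delta(\RdG))$ satisfies the homogeneous problem $\dot w = m\Delta w$, $\jump{\gamma w} = 0$, $\jump{\partial_\nu w} = 0$. Taking Laplace transforms, $W(s) := \mathcal L\{w\}(s)$ solves $\Delta W(s) - (s/m)W(s) = 0$ in $\RdG$ with zero jumps, i.e.\ $W(s) \in H^1(\Rd)$ and $(\nabla W(s), \nabla V)_\RdG + (s/m)(W(s),V)_\Rd = 0$ for all $V \in H^1(\Rd)$; testing with $V = \overline{W(s)}$ and using the coercivity identity displayed before \eqref{eq:GG1} (now with $s$ replaced by $s/m$) forces $W(s) = 0$ for every $s \in \mathbb C_\star$, hence $w = 0$ by injectivity of the Laplace transform on causal tempered distributions. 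This gives uniqueness within the stated class.

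The main obstacle I anticipate is purely bookkeeping rather than conceptual: one has to make sure that all manipulations — taking $\mathcal L^{-1}$ of a product of transfer functions, commuting the spatial operators $\Delta$, $\gamma^\pm$, $\partial_\nu^\pm$ past the inverse transform, and identifying $\mathcal L^{-1}\{s\,U(s)\}$ with $\dot u$ — are legitimate inside the distributional framework of \cite[Chapter 3]{Sayas2016}, which requires checking that every intermediate object still satisfies a bound of type \eqref{eq:CCC1} with the right kind of growth in $|s|$ and at worst rational blow-up at $s=0$. The estimates in Theorem~\ref{th:bounds} are precisely tailored to supply these, and the convolution theorem $\mathcal L\{\mathcal S_m * \lambda\} = \SS(s/m)\Lambda(s)$ quoted in the text after the definitions handles the product step, so no genuinely new analysis is needed — the proof is essentially "apply $\mathcal L^{-1}$ to the Laplace-domain result, which is Theorem~\ref{th:bounds} plus the variational characterization \eqref{eq:GG1}."
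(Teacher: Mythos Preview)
Your proposal is correct and follows exactly the route the paper indicates: the paper does not give a detailed proof but simply states, just before the theorem, that it ``is a consequence of the analogous result in the Laplace domain,'' and your argument (Laplace transform, use the defining transmission problem and jump relations of $\SS(\cdot/m)$ and $\DD(\cdot/m)$, invoke the bounds of Theorem~\ref{th:bounds} to stay within the $\td$ framework, then invert) is precisely that consequence spelled out. The uniqueness step via coercivity of \eqref{eq:GG1} at $s/m$ is also the natural one and is implicit in the paper's remark that \eqref{eq:GG1} ``admits a unique solution.''
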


Even though we will not need them for our numerical scheme, we now write down the explicit expression for the heat layer potentials. The time domain fundamental solution of the heat equation is
\[
\mathcal G_m(\bff{x},t):= (4\pi mt)^{-d/2} \exp \left( -{|\bff{x}|^2\over 4mt}\right) \chi_{(0,\infty)} (t).
\]
The single layer potential is then given by
\[
(\mathcal S_m* \lambda) (\bff x,t):= \int_0^t\int_\Gamma \mathcal G_m(\bff x-\bff y,t-\tau)\lambda(\bff y,\tau)\,\mathrm{d} \Gamma_{\bff y}\,\mathrm{d} \tau
\]
while the integral form of the double layer potential is
\[
(\mathcal D_m* \phi) (\bff x,t) := \int_0^t\int_\Gamma {\partial \over \partial \nu_{\bff y}} \mathcal G_m(\bff x-\bff y,t-\tau)\phi(\bff y,\tau)\,\mathrm{d} \Gamma_{\bff y}\,\mathrm{d} \tau.
\]
These integral operators are well defined for smooth enough densities $\lambda$ and $\phi$ and they coincide with the distributional defintions. Precise mapping properties in anisotropic space-time Sobolev spaces are given in the fundamental work of Martin Costabel \cite{Costabel1990}.

The transmission problem in the sense of distributions is a weak version of \eqref{eqn:1}. The data are now $\beta_0\in \td(H^{1/2}_\Gamma)$ and $\beta_1\in \td(H^{-1/2}_\Gamma)$ and we look for $u\in \td(H^1_\Delta(\mathbb R^d\setminus\Gamma)$ satisfying
\begin{subequations}\label{eqn:2}
\begin{alignat}{3}
\rho \dot{u}  & =  \kappa\Delta u, &&\quad (\text{in } L^2(\Omega_-)), \\
\dot{u} & = \Delta u, &&\quad (\text{in } L^2(\Omega_+)), \\
\gamma^- u - \gamma^+ u & =\beta_0, &&\quad (\text{in } H^{1/2}_\Gamma),\\
\kappa \partial_\nu^- u  - \partial_\nu^+ u & =\beta_1 ,&&\quad (\text{in } H^{-1/2}_\Gamma).
\end{alignat}
\end{subequations}
The upper dot is now the distributional differentiation with respect to the time variable.
The bracket in the right-hand sides of the equations in \eqref{eqn:2} clarifies where the equations hold. For instance, when we say $\rho \dot u=\kappa\Delta u$ in $L^2(\Omega_-)$, we mean that both sides of the equation are equal as $L^2(\Omega_-)$-valued distributions. A rigorous understanding of such an equation requires the elementary but careful use of steady-state operators, like distributional differentiation in the space variables or the restriction of a function to a subdomain.  

\paragraph{An integral system.} We finally derive a system of time domain boundary integral equations (TDBIE) that is equivalent to the transmission problem \eqref{eqn:2}. This follows exactly the same pattern as the work of Costabel and Stephan for steady-state (or time-harmonic) problems \cite{CoSt1985}, recently extended to transmission problems for the wave equation \cite{QiSa2014}. Since the ideas are exactly the same as in those references, we will just sketch the process. We first choose the interior trace and normal derivative of $u$ from \eqref{eqn:2} as unknowns
\begin{equation} \label{eq:2.4}
\phi := \gamma^- u,\qquad \lambda :=\partial_\nu^- u.
\end{equation}
We then define two scalar fields
\begin{subequations}\label{eq:2.7}
\begin{align}
u_-  & := \mathcal S_m * \lambda - \mathcal D_m * \phi, \quad \mbox{with $m:=\rho^{-1}\kappa$,}\\
u_+ & :=- \mathcal S * (\kappa \lambda - \beta_1)+ \mathcal D * (\phi-\beta_0),
\end{align}
\end{subequations}
each of them {\em defined on both sides of the boundary}, and related to the solution of \eqref{eqn:2} by
\[
u_- = u\chi_{\Omega_-}, \qquad u_+ = u\chi_{\Omega_+},
\]
where the symbol $\chi_{\mathcal O}$ is used to the denote the characteristic function of the set $\mathcal O$. In theory, this doubles the number of unknowns of the problem, even if we know that $u_-$ and $u_+$ vanish identically in $\Omega_+$ and $\Omega_-$ respectively.  Later on, it will  be clear that the doubling of unknowns is a natural byproduct of semidiscretization in space. The solution of \eqref{eqn:2} can be reconstructed as $u=u_-+u_+$, where $u_-,u_+\in \td(H^1_\Delta(\mathbb R^d\setminus\Gamma))$ satisfy
\begin{subequations}\label{eq:2.9}
\begin{alignat}{3}
\rho \dot{u}_-  & = \kappa\Delta u_-, &&\quad (\text{in } L^2(\RdG)), \\
\dot{u}_+ & = \Delta u_+, &&\quad (\text{in } L^2(\RdG)), \\
\gamma^+ u_- - \gamma^- u_+ & =0, &&\quad (\text{in } H^{1/2}_\Gamma), \\
\kappa \partial_\nu^+ u_-  - \partial_\nu^- u_+ & =0, &&\quad (\text{in } H^{-1/2}_\Gamma), \\
\jump{\gamma u_-} + \jump{\gamma u_+} & =\beta_0, &&\quad (\text{in } H^{1/2}_\Gamma), \\
\kappa \jump{\partial_\nu u_-}  + \jump{\partial_\nu u_+} & =\beta_1, &&\quad (\text{in } H^{-1/2}_\Gamma).
\end{alignat}
\end{subequations}
If we now substitute the representation formula \eqref{eq:2.7} in (\ref{eq:2.9}c)-(\ref{eq:2.9}d), it follows that $\lambda$ and $\phi$ satisfy
\begin{equation}\label{eq:2.8}
\left[\begin{array}{cc}
	 \mathcal V_{m}+\kappa \mathcal V & - \mathcal K_{m} - \mathcal K \\
	 \mathcal K^T_{m}+ \mathcal K^T   & \mathcal W_{m} +\frac{1}{\kappa} \mathcal W
\end{array}\right] *
\left[\begin{array}{c} \lambda \\ \phi \end{array}\right]
=
\frac12\left[\begin{array}{c} \beta_0 \\ \frac{1}{\kappa} \beta_1 \end{array}\right]
+ \left[\begin{array}{cc}
	 \mathcal V &- \mathcal K \\
	\frac{1}{\kappa} \mathcal K^T & \frac{1}{\kappa} \mathcal W
\end{array}\right] *
\left[\begin{array}{c} \beta_1 \\ \beta_0 \end{array}\right].
\end{equation}
We summarize the relations between the boundary integral equations and the partial differential equation in the following proposition. Its proof follows from elementary arguments using the jump relations of potentials and the definitions of the associate boundary integral operators.

\begin{proposition}
Assume that $(\lambda,\phi)\in \td(H^{-1/2}_\Gamma\times H^{1/2}_\Gamma)$ is a solution of \eqref{eq:2.8} and let $(u_-,u_+)\in \td(H^1_\Delta(\mathbb R^d\setminus\Gamma)^2)$ be defined by \eqref{eq:2.7}. The pair $(u_-,u_+)$ is then a solution of \eqref{eq:2.9} and $u:=u_-\chi_{\Omega_-} + u_+\chi_{\Omega_+}\in \td(H^1_\Delta(\mathbb R^d\setminus\Gamma))$ is the solution to \eqref{eqn:2}.
Reciprocally, if $u\in\td(H^1_\Delta(\mathbb R^d\setminus\Gamma))$ solves \eqref{eqn:2}, then the pair $(\lambda,\phi)$ defined by \eqref{eq:2.4} is a solution of \eqref{eq:2.8}.
\end{proposition}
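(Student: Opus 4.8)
The plan is to verify the equivalence in both directions by translating between the PDE formulation and the boundary integral system via the jump relations recorded before the statement. The key observation is that the decomposition $u = u_-\chi_{\Omega_-} + u_+\chi_{\Omega_+}$ reduces the transmission problem \eqref{eqn:2} to the doubled system \eqref{eq:2.9}, so it suffices to connect \eqref{eq:2.8} with \eqref{eq:2.9}.

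For the forward direction, I would start from a solution $(\lambda,\phi)$ of \eqref{eq:2.8} and set $(u_-,u_+)$ by the representation formula \eqref{eq:2.7}. By Theorem~\ref{th:2} applied with diffusivity $m=\rho^{-1}\kappa$ to the pair $(\phi,\lambda)$, the function $u_-$ lies in $\td(H^1_\Delta(\mathbb R^d\setminus\Gamma))$ and satisfies $\rho\dot u_- = \kappa\Delta u_-$ together with $\jump{\gamma u_-}=\phi$, $\jump{\partial_\nu u_-}=\lambda$. Similarly, applying Theorem~\ref{th:2} with $m=1$ to the densities $(\phi-\beta_0,\kappa\lambda-\beta_1)$ shows $u_+$ satisfies $\dot u_+ = \Delta u_+$, $\jump{\gamma u_+}=-(\phi-\beta_0)+\ldots$; more precisely, reading off the jumps of $\mathcal S$ and $\mathcal D$ gives $\jump{\gamma u_+} = \beta_0-\phi$ and $\jump{\partial_\nu u_+} = -(\kappa\lambda-\beta_1) = \beta_1-\kappa\lambda$. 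Hence (\ref{eq:2.9}e)--(\ref{eq:2.9}f) hold automatically by construction. It remains to verify the \emph{transmission} conditions (\ref{eq:2.9}c)--(\ref{eq:2.9}d), i.e.\ that the "wrong side" traces match. For this I would take $\gamma^\pm$ and $\partial_\nu^\pm$ of the potentials in \eqref{eq:2.7}, express everything through $\VV,\KK,\KK^T,\WW$ using the limit relations $\gamma^\pm\DD = \pm\tfrac12\II+\KK$ and $\partial_\nu^\pm\SS = \mp\tfrac12\II+\KK^T$, and check that (\ref{eq:2.9}c)--(\ref{eq:2.9}d) are \emph{exactly} the two scalar equations encoded by the matrix system \eqref{eq:2.8}: the first row of \eqref{eq:2.8} should reproduce $\gamma^+u_- = \gamma^-u_+$ and the second row $\kappa\partial_\nu^+u_- = \partial_\nu^-u_+$, after collecting the $\mp\tfrac12\II$ terms into the right-hand side. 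Once \eqref{eq:2.9} holds, one sets $u=u_-\chi_{\Omega_-}+u_+\chi_{\Omega_+}$ and checks \eqref{eqn:2} directly: the PDEs hold on each side, the Dirichlet jump is $(\gamma^-u_- - \gamma^-u_+\chi\ldots)$ — more carefully, $\gamma^-u = \gamma^-u_-$ and $\gamma^+u = \gamma^+u_+$ (using that $u_\pm$ vanish on the opposite domain so their relevant one-sided traces are the physical ones), whence $\gamma^-u-\gamma^+u = \gamma^-u_--\gamma^+u_+ = \jump{\gamma u_-}+(\gamma^+u_--\gamma^+u_+) = \jump{\gamma u_-}+\jump{\gamma u_+} = \beta_0$ by (\ref{eq:2.9}c) and (\ref{eq:2.9}e); the Neumann condition is analogous.

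For the reciprocal direction, I would start from $u$ solving \eqref{eqn:2}, define $\phi,\lambda$ by \eqref{eq:2.4}, and use the Green representation (Theorem~\ref{th:2}) applied to $u\chi_{\Omega_-}$ (whose jumps across $\Gamma$ are exactly $(\phi,\lambda)$, since it vanishes in $\Omega_+$) and, after a short computation, to $u\chi_{\Omega_+}$, identifying it with the second line of \eqref{eq:2.7} by matching jumps: $u\chi_{\Omega_+}$ has Dirichlet jump $-\gamma^+u = \beta_0-\phi$ and Neumann jump $-\partial_\nu^+u = \beta_1-\kappa\lambda$, which are precisely the densities of $\mathcal D*(\phi-\beta_0)-\mathcal S*(\kappa\lambda-\beta_1)$. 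Uniqueness in Theorem~\ref{th:2} forces $u_\pm$ to coincide with the potentials in \eqref{eq:2.7}; substituting back into (\ref{eq:2.9}c)--(\ref{eq:2.9}d) — which hold because $u\chi_{\Omega_\pm}$ have zero one-sided traces on the far side — yields \eqref{eq:2.8}.

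The routine but delicate part — and where I expect the main bookkeeping obstacle — is the careful tracking of signs and of which one-sided trace operator ($\gamma^+$ vs.\ $\gamma^-$, $\partial_\nu^+$ vs.\ $\partial_\nu^-$) acts on which potential, together with the fact that $\mathcal S$, $\mathcal D$ in the definition of $u_+$ use diffusivity $m=1$ while those in $u_-$ use $m=\rho^{-1}\kappa$, so the boundary operators carry different subscripts in the two rows of \eqref{eq:2.8}. Everything reduces to the jump and average identities stated in the excerpt plus linearity of convolution, so no genuinely new estimate is needed; the content is purely algebraic, exactly as in \cite{CoSt1985, QiSa2014}, and the proof amounts to organizing this bookkeeping cleanly.
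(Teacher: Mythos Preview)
Your proposal is correct and follows exactly the approach the paper indicates: the paper does not spell out a detailed proof but simply states that the result ``follows from elementary arguments using the jump relations of potentials and the definitions of the associated boundary integral operators,'' and your sketch is precisely a careful unpacking of that bookkeeping. One small remark: your parenthetical ``using that $u_\pm$ vanish on the opposite domain'' is unnecessary (and not a priori available in the forward direction), but your actual computation does not rely on it---the identity $\gamma^- u - \gamma^+ u = \jump{\gamma u_-} + \jump{\gamma u_+}$ follows from (\ref{eq:2.9}c) alone, as you correctly derive.
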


\section{Galerkin semidiscretization in space}
\label{sect:galerkin}

In this section we address the semidiscretization in space of the system of TDBIE \eqref{eq:2.8}, using a completely general Galerkin scheme, and the postprocessing of the boundary unknowns using the potential expressions \eqref{eq:2.7}.

\subsection{The semidiscrete problem}

We start by choosing a pair of finite dimensional subspaces $X_h\subset H^{-1/2}_\Gamma, Y_h\subset H^{1/2}_\Gamma$. (Note that following \cite{LaSa2009} we will only need $X_h$ and $Y_h$ to be closed.) Their respective polar sets are
\begin{alignat*}{6}
X_h^\circ  & := \{ \phi\in H^{1/2}_\Gamma : \langle \mu^h, \phi\rangle_\Gamma =0\quad \forall \mu^h \in X_h \}, \\
Y_h^\circ & := \{ \lambda\in H^{-1/2}_\Gamma : \langle \lambda, \varphi^h\rangle_\Gamma =0\quad \forall \varphi^h \in Y_h \}.
\end{alignat*}
The semidiscrete method looks first for $\lambda^h \in \td(X_h), \phi^h \in \td(Y_h)$ satisfying a weakly-tested version of equations \eqref{eq:2.8}
\begin{equation} \label{eq:8}
\left[\begin{array}{cc}
	 \mathcal V_{m}+\kappa \mathcal V & - \mathcal K_{m} - \mathcal K \\
	 \mathcal K_{m}^T+ \mathcal K^T   & \mathcal W_{m} +\frac{1}{\kappa} \mathcal W
\end{array}\right] *
\left[\begin{array}{c} \lambda^h \\ \phi^h \end{array}\right]
-
\frac12\left[\begin{array}{c} \beta_0 \\ \frac{1}{\kappa} \beta_1 \end{array}\right]
- \left[\begin{array}{cc}
	 \mathcal V &- \mathcal K \\
	\frac{1}{\kappa} \mathcal K^T & \frac{1}{\kappa} \mathcal W
\end{array}\right] *
\left[\begin{array}{c} \beta_1 \\ \beta_0 \end{array}\right] \in X_h^\circ \times Y_h^\circ.
\end{equation}
The expression \eqref{eq:8} is a compacted form of the Galerkin equations: when we write that the residual of the equations is in $X_h^\circ\times Y_h^\circ$, we are equivalently requiring the residual to vanish when tested by elements of $X_h\times Y_h$. Once the boundary unknowns have been computed, the potential representation
\begin{equation}\label{eq:9}
u_-^h = \mathcal S_m * \lambda^h - \mathcal D_m * \phi^h, \qquad u_+^h =- \mathcal S * (\kappa \lambda^h - \beta_1)+ \mathcal D * (\phi^h -\beta_0),
\end{equation}
yields two fields defined on both sides of $\Gamma$ and satisfying the corresponding wave equations. 

If we subtract \eqref{eq:8} by \eqref{eq:2.8}, we obtain the system satisfied by the error of unknown densities on the boundary
\begin{equation} \label{eq:3.8}
\left[\begin{array}{cc}
	 \mathcal V_{m}+\kappa \mathcal V & - \mathcal K_{m} - \mathcal K \\
	 \mathcal K_{m}^T+ \mathcal K^T   & \mathcal W_{m} +\frac{1}{\kappa} \mathcal W
\end{array}\right] *
\left[\begin{array}{c} \lambda^h-\lambda \\ \phi^h-\phi \end{array}\right] \in X_h^\circ \times Y_h^\circ.
\end{equation}
The error corresponding to the posprocessed fields is easily derived by subtracting \eqref{eq:2.7} from \eqref{eq:9},
\begin{subequations} \label{eq:3.9}
\begin{align}
e_-^h & := u_-^h - u_- = \mathcal S_m * (\lambda^h-\lambda) - \mathcal D_m * (\phi^h-\phi), \\ 
e_+^h & := u_+^h - u_+ = - \mathcal S * \kappa ( \lambda^h - \lambda )+ \mathcal D * (\phi^h -\phi).
\end{align}
\end{subequations}

\paragraph{An exotic transmission problem.}
A transmission problem will encompass the solution of the semidiscrete system \eqref{eq:8}-\eqref{eq:9} and the associated error system \eqref{eq:3.8}-\eqref{eq:3.9}. The problem looks for $w_-,w_+\in \td(H^1_\Delta(\mathbb R^d\setminus\Gamma))$ such that 
\begin{subequations}\label{eqn:tdtransmission}
\begin{align}
\label{eqn:tdtransmissionA}
\rho \dot{w}_- & = \kappa \Delta w_-, & \dot{w}_+ & = \Delta w_+, \\
\jump{\gamma w_-} + \jump{\gamma w_+} & =\beta_0, 
&\kappa \jump{\partial_\nu w_-} +\jump{\partial_\nu w_+} & =\beta_1,\\
\gamma^+ w_- - \gamma^- w_+ & \in X_h^\circ,
&\kappa \partial_\nu^+ w_- - \partial_\nu^- w_+ & \in Y_h^\circ,\\
\jump{\gamma w_-} +\phi & \in Y_h,
&\jump{\partial_\nu w_-} +\lambda & \in X_h.
\end{align}
\end{subequations}
Equations \eqref{eqn:tdtransmissionA}  take place in $L^2(\RdG)$, while all six transmission conditions are equalities of $H_\Gamma^{\pm 1/2}$-valued distributions.
It can be shown by Theorem \ref{th:2} that \eqref{eq:8}-\eqref{eq:9} is equivalent to the above system with $\lambda=0,\phi=0$.  
On the other hand, if we set $\beta_0=0,\beta_1=0$, then the spatial semidiscrete error $(e_-^h,e_+^h)$ of \eqref{eq:3.9} is the solution to \eqref{eqn:tdtransmission}. 
 The main results for this section require some additional functional language and will be given in Section \ref{sec:3.2}, after we have embedded a stronger version of the distributional system \eqref{eqn:tdtransmission} in a framework of evolutionary problems on a Hilbert space.

\subsection{Functional framework}

The handling of the double transmission problem \eqref{eqn:tdtransmission} (with two fields defined on both sides of the interface) can be carried out with theory of differential equations associated to the infinitesimal generator of an analytic semigroup. Consider first the following spaces
\[
\bs H:=L^2(\RdG)^2,
	\qquad
\bs V:=H^1(\RdG)^2,
	\qquad
\bs D:=H^1_\Delta(\RdG)^2.
\]
To separate components of the elements of these spaces we will write $\bs w=(w_-,w_+)$. Given a constant $c\neq 0$ (we will need $c\in\{\rho,\rho^{-1},\kappa\}$) we will write $\bs c\bs w:=(c w_-,w_+)$ for the associated multiplication operator acting on the first component of the vector. We will also consider the following bilinear forms
\begin{alignat*}{6}
(\bs w,\bs w')_{\bs H}
	&:= (\bs\rho\bs w,\bs w')_\RdG 
	&&=(\rho w_-,w_-')_\RdG+(w_+,w_+')_\RdG,\\
[\bs w,\bs w']
	&:=(\bs\kappa\nabla \bs w,\nabla \bs w')_\RdG
	&&=(\kappa \nabla w_-,\nabla w_-')_\RdG+(\nabla w_+,\nabla w_+')_\RdG,
\end{alignat*}
and four copies of the boundary spaces, equipped with their product duality pairing
\[
\bs H^{\pm 1/2}_\Gamma:=(H^{\pm 1/2}_\Gamma)^4,
	\qquad
\llangle \bs n,\bs d\rrangle_\Gamma
	:=\sum_{i=1}^4 \langle n_i,d_i\rangle_\Gamma.
\]
The two-sided trace and normal derivative operators
\begin{alignat*}{6}
\bs V \ni \bs v &\quad \longmapsto \quad 
	&&\bs\gamma \bs v 
	& :=(\gamma^-v_-,\gamma^+v_-,\gamma^-v_+,\gamma^+v_+),\\
\bs D \ni \bs v &\quad \longmapsto \quad 
	&&\bs\partial_\nu \bs v 
	& :=(\partial_\nu^-v_-,\partial_\nu^+v_-,
			\partial_\nu^-v_+,\partial_\nu^+v_+),	
\end{alignat*}
are remixed to transmission operators
\[
\bs\gamma_D:=\Theta_D\bs\gamma:\bs V \to \bs H^{1/2}_\Gamma,
	\qquad
\bs\gamma_N:=\Theta_N\bs\partial_\nu:\bs D \to \bs H^{-1/2}_\Gamma,
\]
where the matrices
\[
\Theta_D:=\left[\begin{array}{cccc} 
		1 & -1 & 1 & -1 \\
		0 & 1 & -1 & 0 \\
		1 & -1 & 0 & 0 \\
		0 & 0 & 0 & 1 
		\end{array}\right],
\qquad
\Theta_N:=\left[\begin{array}{cccc} 
		1 & -1 & 1 & 0 \\
		1 & -1 & 0 & 0 \\
		0 & 1 & -1 & 0 \\
		1 & -1 & 1 & -1 
		\end{array}\right]
\]
satisfy
\[
\Theta_N^\top \Theta_D=S:=\mathrm{diag}(1,-1,1,-1).
\]
Note that the first three components of $\bs\gamma_D$ and the last three components of $\bs\gamma_N$ appear in the transmission conditions of \eqref{eqn:tdtransmission} and
\begin{alignat}{6}
\nonumber
(\nabla\bs w,\nabla\bs v)_\RdG+(\Delta\bs w,\bs v)_\RdG
& =\llangle \bs\partial_\nu \bs w,S\bs\gamma \bs v\rrangle_\Gamma 
=\llangle \Theta_N \bs\partial_\nu \bs w,\Theta_D\bs\gamma \bs v\rrangle_\Gamma\\
& =\llangle \bs\gamma_N\bs w,\bs\gamma_D \bs v\rrangle_\Gamma
\qquad \forall \bs w\in \bs D, \quad \bs v\in \bs V.
\label{eq:AA15}
\end{alignat}
This integration by parts formula can be understood as a different way of rephrasing \cite[formula (4.7)]{QiSa2014}.
We finally consider the operator
\begin{equation}
\bs D \ni \bs w \quad \longmapsto \quad 
A_\star\bs w:=\bs\rho^{-1}\bs\kappa \Delta \bs w=
(\rho^{-1} \kappa \Delta w_-,\Delta w_+) \in \bs H,
\end{equation}
and the spaces
\begin{subequations}
\begin{alignat}{6}
\bs M^{1/2} &:=\{ 0\} \times X_h^\circ \times Y_h \times H^{1/2}_\Gamma
	&& \subset \bs H^{1/2}_\Gamma,\\
\bs M^{-1/2} &:=H^{-1/2}_\Gamma \times X_h \times Y_h^\circ \times \{0\}
	&& \subset \bs H^{-1/2}_\Gamma,
\end{alignat}
\end{subequations}
which are respective polar spaces. In the coming paragraphs we will study the following problem: we are given data
\[
\bs\chi_D:[0,\infty)\to \bs H^{1/2}_\Gamma,
	\qquad
\bs\chi_N:[0,\infty)\to \bs H^{-1/2}_\Gamma,
\]
and we look for $\bs w:[0,\infty)\to \bs D$ satisfying
\begin{subequations}\label{eq:AA18}
\begin{alignat}{6}
\dot{\bs w}(t) &=A_\star \bs w(t) & \quad & \forall t\ge 0,\\
\label{eq:AA18b}
\bs\gamma_D \bs w(t)-\bs\chi_D(t) &\in \bs M^{1/2} & \quad & \forall t\ge 0,\\
\label{eq:AA18c}
\bs\gamma_N\bs\kappa \bs w(t)-\bs\chi_N(t) &\in \bs M^{-1/2} 
	& \quad & \forall t\ge 0,\\
\bs w(0)&=0.
\end{alignat}
\end{subequations}
This is a strong form (restricted to the interval $[0,\infty)$ and with strong derivatives, instead of distributional ones) of \eqref{eqn:tdtransmission} when we choose $\bs\chi_D=(\beta_0,0,-\phi,0)$ and $\bs\chi_N=(0,-\kappa\lambda,0,\beta_1)$. 
The last ingredient for our framework consists of two spaces
\begin{subequations}
\begin{alignat}{6}
\bs V_h &:=\{\bs v\in \bs V\,:\, \bs\gamma_D \bs v\in \bs M^{1/2}\},\\
\bs D_h &:=\{\bs w\in \bs D\,:\,\bs\gamma_D \bs w\in \bs M^{1/2},
\bs\gamma_N\bs\kappa\bs w\in \bs M^{-1/2}\},
\end{alignat}
\end{subequations}
and the unbounded operator $A:D(A)\to \bs H$ given by $A\bs w:=A_\star \bs w$, when $\bs w\in D(A):=\bs D_h$. In some future arguments we will find it advantageous to collect the transmission conditions \eqref{eq:AA18b}-\eqref{eq:AA18c} in a single expression, using
\[
\mathcal B\bs w:=(\bs\gamma_D\bs w,\bs\gamma_N\bs\kappa\bs w), \qquad
\bs\chi:=(\bs\chi_D,\bs\chi_N), \qquad
\bs M:=\bs M^{1/2}\times \bs M^{-1/2},
\]
so that \eqref{eq:AA18b}-\eqref{eq:AA18c} can be shortened to $\mathcal B\bs w(t)-\bs\chi(t)\in \bs M$. 

\begin{proposition}\label{prop:4}
With the above notation:
\begin{itemize}
\item[\rm (a)] For every $(\bs g,\bs\xi_D,\bs\xi_N)\in \bs H\times \bs H^{1/2}_\Gamma\times \bs H^{-1/2}_\Gamma$, the steady state problem
\begin{equation}\label{eq:BB20}
\bs w=A_\star\bs w+\bs g, 
	\qquad 
\bs\gamma_D \bs w-\bs\xi_D \in \bs M^{1/2},
	\qquad
\bs\gamma_N\bs\kappa \bs w-\bs\xi_N \in \bs M^{-1/2}
\end{equation}
admits a unique solution and
\[
\|\bs w\|_{1,\RdG}+\| \Delta \bs w\|_\RdG\le C 
	(\|\bs g\|_\RdG
		+\|\bs\xi_D\|_{1/2,\Gamma}+\|\bs\xi_N\|_{-1/2,\Gamma}).
\]
The constant $C$ depends only on $\Gamma$, $\rho$, and $\kappa$.
\item[\rm (b)] The operator $A$ is maximal dissipative and self-adjoint.
\item[\rm (c)] The operator $A$ is the generator of a contractive analytic semigroup in $\bs H$.
\end{itemize}
\end{proposition}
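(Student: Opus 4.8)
The plan is to prove (a) first by reducing the steady-state system \eqref{eq:BB20} to a coercive variational problem on the constrained space $\bs V_h$, then derive (b) and (c) as structural consequences. For (a), I would lift the inhomogeneous Dirichlet-type condition: since the fourth component of $\bs M^{1/2}$ is all of $H^{1/2}_\Gamma$ and the first is $\{0\}$, the constraint $\bs\gamma_D\bs w-\bs\xi_D\in\bs M^{1/2}$ pins down only the first and third components of $\bs\gamma_D\bs w$ (the trace jump data and a $Y_h$-relation); using surjectivity of the individual trace operators $\gamma^\pm$ one constructs a lift $\bs w_0\in\bs V$ with $\bs\gamma_D\bs w_0-\bs\xi_D\in\bs M^{1/2}$ and $\|\bs w_0\|_{1,\RdG}\lesssim\|\bs\xi_D\|_{1/2,\Gamma}$. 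Writing $\bs w=\bs w_0+\bs u$ with $\bs u\in\bs V_h$, the equation $\bs w-A_\star\bs w=\bs g$ together with the Neumann-type constraint \eqref{eq:AA18c} becomes, via the integration-by-parts identity \eqref{eq:AA15} and the fact that $\bs M^{1/2}$ and $\bs M^{-1/2}$ are mutually polar (so test functions in $\bs V_h$ annihilate the $\bs M^{-1/2}$-slack in $\bs\gamma_N\bs\kappa\bs w-\bs\xi_N$ against the $\bs M^{1/2}$-part of their traces), the variational problem: find $\bs u\in\bs V_h$ with
\[
(\bs\rho\bs u,\bs v)_\RdG+(\bs\kappa\nabla\bs u,\nabla\bs v)_\RdG
=(\bs\rho\bs g,\bs v)_\RdG-(\bs\rho\bs w_0,\bs v)_\RdG-(\bs\kappa\nabla\bs w_0,\nabla\bs v)_\RdG+\llangle\bs\xi_N,\bs\gamma_D\bs v\rrangle_\Gamma
\quad\forall\bs v\in\bs V_h.
\]
The bilinear form on the left is exactly $(\cdot,\cdot)_{\bs H}+[\cdot,\cdot]$, which is symmetric, bounded, and coercive on $\bs V$ (hence on the closed subspace $\bs V_h$, closed because $X_h,Y_h$ are closed and $\bs\gamma_D$ is bounded) with the $H^1(\RdG)^2$ norm, so Lax--Milgram gives a unique $\bs u$ with the stated bound. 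Then one checks a posteriori that the distributional Laplacian of $\bs u+\bs w_0$ lands in $L^2$ — i.e. $\bs w\in\bs D$ — by testing with $\bs v\in\mathcal C_c^\infty(\RdG)^2$, which recovers $\Delta\bs w=\bs\kappa^{-1}\bs\rho(\bs w-\bs g)\in\bs H$ and the bound on $\|\Delta\bs w\|_\RdG$; and that the Neumann constraint \eqref{eq:AA18c} holds by running \eqref{eq:AA15} backwards, reading off $\bs\gamma_N\bs\kappa\bs w-\bs\xi_N$ from the variational residual against $\bs\gamma_D\bs v$ for $\bs v\in\bs V_h$ and invoking polarity $\big(\bs V_h\big)^\circ$-type duality once more. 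This also shows $\bs w\in D(A)=\bs D_h$, so \eqref{eq:BB20} with $\bs g\rightsquigarrow$ arbitrary establishes that $I-A:D(A)\to\bs H$ is bijective.

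For (b): dissipativity of $A$ follows from \eqref{eq:AA15} — for $\bs w\in D(A)$, $(A\bs w,\bs w)_{\bs H}=(\bs\kappa\Delta\bs w,\bs w)_\RdG=-[\bs w,\bs w]+\llangle\bs\gamma_N\bs\kappa\bs w,\bs\gamma_D\bs w\rrangle_\Gamma$, and the boundary term vanishes because $\bs\gamma_D\bs w\in\bs M^{1/2}$, $\bs\gamma_N\bs\kappa\bs w\in\bs M^{-1/2}$ are in mutually polar spaces (here I use $\bs\chi_D=\bs\chi_N=0$, i.e. that $\bs M^{1/2},\bs M^{-1/2}$ annihilate each other under $\llangle\cdot,\cdot\rrangle_\Gamma$, which is the content of ``respective polar spaces''). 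Hence $(A\bs w,\bs w)_{\bs H}=-[\bs w,\bs w]\le 0$. Combined with $\mathrm{Range}(I-A)=\bs H$ from (a), the Lumer--Phillips theorem gives that $A$ is maximal dissipative. Self-adjointness: the same computation shows $(A\bs w,\bs v)_{\bs H}=-[\bs w,\bs v]=(\bs w,A\bs v)_{\bs H}$ for $\bs w,\bs v\in D(A)$ (symmetric), and a maximal symmetric operator with $\mathrm{Range}(I-A)=\bs H$ is self-adjoint.

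For (c): a self-adjoint dissipative operator has spectrum in $(-\infty,0]$, so it generates not merely a $C_0$-contraction semigroup but an analytic one (standard: the resolvent estimate $\|(s-A)^{-1}\|\le 1/\mathrm{dist}(s,(-\infty,0])$ holds on all of $\C\setminus(-\infty,0]$, giving analyticity on a sector of opening $\pi$; alternatively quote the theorem that generators of bounded analytic semigroups are exactly those sectorial operators, and a negative self-adjoint operator is sectorial of any angle $<\pi/2$). Contractivity of the semigroup is immediate from dissipativity. The main obstacle I anticipate is not any single deep estimate but the bookkeeping in (a): correctly matching the four components of $\bs\gamma_D,\bs\gamma_N$ against the block structure of $\bs M^{1/2},\bs M^{-1/2}$ so that the Neumann-type constraint \eqref{eq:AA18c} genuinely falls out of the variational formulation on $\bs V_h$ rather than having to be imposed separately — this is exactly where the identity $\Theta_N^\top\Theta_D=S$ and the polarity of $X_h,Y_h$ with their polars $X_h^\circ,Y_h^\circ$ must be used in concert, and it is easy to get a sign or a component wrong. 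Everything else (Lax--Milgram, elliptic regularity to get into $\bs D$, Lumer--Phillips, self-adjoint $\Rightarrow$ analytic) is routine.
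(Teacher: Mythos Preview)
Your proposal is correct and follows essentially the same route as the paper's own proof: reduce \eqref{eq:BB20} to a coercive variational problem on $\bs V_h$ with the bilinear form $(\cdot,\cdot)_{\bs H}+[\cdot,\cdot]$, recover the PDE and hence $\bs w\in\bs D$ by testing against compactly supported functions, and then read off the Neumann-type constraint from the variational residual via the polarity $\bs M^{-1/2}=(\bs M^{1/2})^\circ$; parts (b) and (c) are obtained exactly as you describe, via the identity $(A\bs w,\bs v)_{\bs H}=-[\bs w,\bs v]$ for $\bs w,\bs v\in\bs D_h$, surjectivity of $I-A$ from (a), and the standard passage from self-adjoint maximal dissipative to contractive analytic semigroup. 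The only cosmetic difference is that you make the Dirichlet lift $\bs w_0$ explicit, whereas the paper states the constrained variational problem directly and leaves the lift implicit in the Lax--Milgram argument; for the Neumann recovery the paper invokes surjectivity of $\bs\gamma_D:\bs V_h\to\bs M^{1/2}$, which is the precise content of the ``polarity/duality'' step you allude to.
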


\begin{proof}
To prove (a), consider the coercive variational problem:
\begin{subequations}\label{eq:AA20}
\begin{alignat}{6}
& \bs w\in \bs V, \qquad \bs\gamma_D\bs w-\bs\xi_D \in \bs M^{1/2},\\
\label{eq:AA20b}
& (\bs w,\bs v)_{\bs H}+[\bs w,\bs v]
	=(\bs g,\bs w)_\RdG+\llangle \bs\xi_N, \bs\gamma_D\bs v\rrangle_\Gamma
		\quad\forall \bs v\in \bs V_h.
\end{alignat}
\end{subequations}
The coercivity constant of the bilinear form in \eqref{eq:AA20b} depends only on the constants $\kappa$ and $\rho$ if we use the standard $H^1(\RdG)^2$ norm in $\bs V_h\subset \bs V$. If we test \eqref{eq:AA20} with smooth functions that are compactly supported in $\RdG$, we can prove that $\bs\rho\bs w=\bs\kappa\Delta\bs w$. Therefore, by \eqref{eq:AA15}, it follows that
\[
\llangle \bs\gamma_N\bs\kappa \bs w-\bs\xi_N,\bs\gamma_D\bs v\rrangle_\Gamma=0 \qquad \forall \bs v\in \bs V_h,
\] 
Since $\bs\gamma_D:\bs V_h \to \bs M^{1/2}$ is surjective, this latter condition is
 equivalent to $\bs\gamma_N\bs\kappa \bs w-\bs\xi_N\in \bs M^{-1/2}=(\bs M^{1/2})^\circ$.

Note next that
\[
\llangle \bs\gamma_N\bs\kappa\bs w,\bs\gamma_D \bs v\rrangle_\Gamma=0
	\qquad\forall \bs w \in \bs D_h, \quad\bs v\in \bs V_h,
\]
and therefore, by \eqref{eq:AA15},
\begin{equation}\label{eq:BB22}
(A\bs w,\bs v)_{\bs H}=-[\bs w,\bs v] \quad\forall \bs w \in \bs D_h, \quad\bs v\in \bs V_h.
\end{equation}
This proves symmetry and dissipativity of $A$. Taking $\bs\xi_D=0$ and $\bs\xi_N=0$ in (a), we easily show that $I-A:D(A)\to \bs H$ is surjective and, therefore, $A$ is maximal dissipative and self-adjoint (see \cite[Proposition 3.11]{Schmudgen2012}). 

Finally $A$ is the infinitesimal generator of a contractive semigroup if and only if it is maximal dissipative (see \cite[Chapter 1, Theorem 4.3]{Pazy1983} or \cite[Theorem 4.4.3, Theorem 4.5.1]{Kesavan1989}) and the dissipativity and self-adjointness of $A$ show that the semigroup is analytic (see \cite[Corollary 4.8]{EnNa2006}).
\end{proof}

If we define
\[
| \bs w|_V:=[\bs w,\bs w]^{1/2} \approx \|\nabla \bs w\|_\RdG,
\]
the identity \eqref{eq:BB22} and a simple computation show that
\begin{equation}\label{eq:NEW24}
|\bs w|_V \le \| A\bs w\|_H + \|\bs w\|_H \approx
\|\Delta \bs w\|_\RdG+\|\bs w\|_\RdG \qquad \forall \bs w\in D(A).
\end{equation}
In the sequel we will also use H\"{o}lder spaces 
$\mathcal C^\theta(\mathbb{R}_+;X)$ for $\theta \in (0,1)$, where $X$ is a Hilbert space, and the seminorms
\[
| f|_{t,\theta,X} := \sup_{0\leq \tau_1 <  \tau_2\leq t} {\| f (\tau_1)- f(\tau_2)\|_X \over |\tau_1-\tau_2|^\theta},\qquad t>0.
\]

\begin{proposition}\label{prop:333}
Let $\bs\chi:=(\bs\chi_D,\bs\chi_N):\mathbb R_+\to \bs H_\Gamma:=\bs H^{1/2}_\Gamma\times \bs H^{-1/2}_\Gamma$ and assume that
\[
\dot{\bs\chi}\in \mathcal C^\theta(\mathbb R_+;\bs H_\Gamma),
	\qquad \bs\chi(0)=\dot{\bs\chi}(0)=0.
\]
The problem \eqref{eq:AA18} admits a unique solution satisfying
\[
\dot{\bs w}= A_\star\bs w\in \mathcal C^\theta(\mathbb R_+;\bs H),
	\qquad
\bs w\in \mathcal C^\theta(\mathbb R_+;\bs V).
\]
Moreover, there exist constants independent of $h$ such that for all $t$
\begin{subequations}
\begin{alignat}{6}
\label{eq:26a}
\| \bs w(t)\|_\RdG 
	& \le C_1 t (\|\bs\chi\|_{L^\infty(0,t;\bs H_\Gamma)}
			+\|\dot{\bs\chi}\|_{L^\infty(0,t;\bs H_\Gamma)}),\\
\label{eq:26b}
\| \Delta\bs w(t)\|_\RdG
	&\le C_2 \left(\max\{1,t\} \|\dot{\bs\chi}\|_{L^\infty(0,t;\bs H_\Gamma)}
		+\theta^{-1} t^\theta |\dot{\bs\chi}|_{t,\theta,\bs H_\Gamma}\right),\\
\label{eq:26c}
\| \nabla \bs w(t)\|_\RdG
	& \le C_3 \left( t\|\bs\chi\|_{L^\infty(0,t;\bs H_\Gamma)}
		+\max\{1,t\} \|\dot{\bs\chi}\|_{L^\infty(0,t;\bs H_\Gamma)}
		+\theta^{-1} t^\theta |\dot{\bs\chi}|_{t,\theta,\bs H_\Gamma}\right).
\end{alignat}
\end{subequations}
\end{proposition}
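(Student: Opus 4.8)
The plan is to reduce the inhomogeneous-boundary-condition problem \eqref{eq:AA18} to a standard abstract parabolic initial value problem governed by the generator $A$ from Proposition~\ref{prop:4}, by subtracting off a suitable lifting of the boundary data. Concretely, for each $t$ let $\boldsymbol{\psi}(t)\in\boldsymbol D$ be the solution of the steady-state problem \eqref{eq:BB20} with right-hand side $(\boldsymbol g,\boldsymbol\xi_D,\boldsymbol\xi_N)=(0,\boldsymbol\chi_D(t),\boldsymbol\chi_N(t))$; by part (a) of Proposition~\ref{prop:4} this is well defined and bounded, $\|\boldsymbol\psi(t)\|_{1,\RdG}+\|\Delta\boldsymbol\psi(t)\|_\RdG\le C(\|\boldsymbol\chi_D(t)\|_{1/2,\Gamma}+\|\boldsymbol\chi_N(t)\|_{-1/2,\Gamma})$, and by linearity $\dot{\boldsymbol\psi}$ solves the same problem with data $\dot{\boldsymbol\chi}$, so $\dot{\boldsymbol\psi}$ inherits the H\"older regularity of $\dot{\boldsymbol\chi}$ with the quantitative bound $|\dot{\boldsymbol\psi}|_{t,\theta,\boldsymbol V}+|\Delta\dot{\boldsymbol\psi}|_{t,\theta,\boldsymbol H}\le C|\dot{\boldsymbol\chi}|_{t,\theta,\boldsymbol H_\Gamma}$. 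Setting $\boldsymbol z:=\boldsymbol w-\boldsymbol\psi$, the transmission conditions become homogeneous, i.e. $\boldsymbol z(t)\in D(A)=\boldsymbol D_h$, and $\boldsymbol z$ solves $\dot{\boldsymbol z}(t)=A\boldsymbol z(t)+\boldsymbol F(t)$, $\boldsymbol z(0)=0$, where $\boldsymbol F:=\boldsymbol\psi-\dot{\boldsymbol\psi}$ (using $A_\star\boldsymbol\psi=\boldsymbol\psi$ from \eqref{eq:BB20} with $\boldsymbol g=0$, which gives $A_\star\boldsymbol\psi=\boldsymbol\psi$; more carefully $A_\star\boldsymbol\psi=\boldsymbol\psi-\boldsymbol g=\boldsymbol\psi$, so the forcing is $-\dot{\boldsymbol\psi}+A_\star\boldsymbol\psi=\boldsymbol\psi-\dot{\boldsymbol\psi}$). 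Note $\boldsymbol\psi(0)=0$ and $\dot{\boldsymbol\psi}(0)=0$ because $\boldsymbol\chi(0)=\dot{\boldsymbol\chi}(0)=0$, which is exactly what makes $\boldsymbol z(0)=0$ compatible.

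Next I would invoke the standard theory of analytic semigroups for the inhomogeneous Cauchy problem. Since $A$ generates a bounded analytic semigroup (Proposition~\ref{prop:4}(c)) and the forcing $\boldsymbol F$ is H\"older continuous in $\boldsymbol H$ on $\mathbb R_+$ (because $\boldsymbol\psi$ is even Lipschitz-into-$\boldsymbol H$, being the integral of the bounded $\dot{\boldsymbol\psi}$, and $\dot{\boldsymbol\psi}\in\mathcal C^\theta(\mathbb R_+;\boldsymbol H)$), the mild solution $\boldsymbol z(t)=\int_0^t e^{(t-\tau)A}\boldsymbol F(\tau)\,\mathrm d\tau$ is a classical solution with $\boldsymbol z(t)\in D(A)$ for $t>0$, $\dot{\boldsymbol z}=A\boldsymbol z+\boldsymbol F\in\mathcal C^\theta$, and the maximal-regularity-type estimate $\|A\boldsymbol z(t)\|_{\boldsymbol H}\le C(\|\boldsymbol F(0)\|_{\boldsymbol H}+\sup_{[0,t]}\|\boldsymbol F\|_{\boldsymbol H}+t^\theta\theta^{-1}|\boldsymbol F|_{t,\theta,\boldsymbol H})$; this is the classical result (Pazy, Thm.~4.3.1, or Lunardi) combined with contractivity of the semigroup to keep constants time-uniform, and the $\theta^{-1}$ arises from integrating $\tau^{\theta-1}$. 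Then $\boldsymbol w=\boldsymbol z+\boldsymbol\psi$, and \eqref{eq:26b} follows by writing $\Delta\boldsymbol w$ in terms of $A\boldsymbol z$, $\Delta\boldsymbol\psi$ (using $A_\star=\boldsymbol\rho^{-1}\boldsymbol\kappa\Delta$, so $\|\Delta\boldsymbol w\|\lesssim\|A_\star\boldsymbol w\|=\|\dot{\boldsymbol w}\|\lesssim\|A\boldsymbol z\|+\|\dot{\boldsymbol\psi}\|$), the $\boldsymbol F(0)$ term dropping out since $\boldsymbol F(0)=\boldsymbol\psi(0)-\dot{\boldsymbol\psi}(0)=0$, and the $\sup\|\boldsymbol F\|$ term contributing the $\max\{1,t\}\|\dot{\boldsymbol\chi}\|_{L^\infty}$ (the $\|\boldsymbol\psi\|$ part grows like $t\sup\|\dot{\boldsymbol\chi}\|$ via $\boldsymbol\psi=\int_0^t\dot{\boldsymbol\psi}$, combined with the non-$t$-dependent part of the regularity estimate).

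For the remaining two bounds: \eqref{eq:26a} is the crudest — simply integrate $\dot{\boldsymbol w}$ and use $\boldsymbol w(0)=0$ to get $\|\boldsymbol w(t)\|_\RdG\le\int_0^t\|\dot{\boldsymbol w}(\tau)\|_\RdG\,\mathrm d\tau$, then bound $\|\dot{\boldsymbol w}(\tau)\|_\RdG=\|A_\star\boldsymbol w(\tau)\|\lesssim\|A\boldsymbol z(\tau)\|+\|\dot{\boldsymbol\psi}(\tau)\|$; here it is cleanest to use a weaker version of the preceding estimate, or to observe directly from contractivity that $\|\boldsymbol z(t)\|_{\boldsymbol H}\le\int_0^t\|\boldsymbol F\|_{\boldsymbol H}\le t\sup_{[0,t]}\|\boldsymbol F\|_{\boldsymbol H}$ and that the $H^{1/2}$-$H^{-1/2}$ norms of $\boldsymbol\chi$ control the $\boldsymbol H$-norm of $\boldsymbol\psi$, which yields the linear-in-$t$ growth against $\|\boldsymbol\chi\|_{L^\infty}+\|\dot{\boldsymbol\chi}\|_{L^\infty}$. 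For \eqref{eq:26c} I would use the interpolation inequality \eqref{eq:NEW24}, $|\boldsymbol w|_V\lesssim\|A_\star\boldsymbol w\|_{\boldsymbol H}+\|\boldsymbol w\|_{\boldsymbol H}$ applied with $\boldsymbol w$ replaced by $\boldsymbol z$ (which lies in $D(A)$), plus the direct $H^1$ bound on $\boldsymbol\psi$ from Proposition~\ref{prop:4}(a), giving $\|\nabla\boldsymbol w\|_\RdG\lesssim\|A\boldsymbol z\|_{\boldsymbol H}+\|\boldsymbol z\|_{\boldsymbol H}+\|\boldsymbol\psi\|_{1,\RdG}$; inserting \eqref{eq:26b}-type and \eqref{eq:26a}-type bounds produces exactly the three terms on the right of \eqref{eq:26c}. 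The main obstacle I anticipate is not any single estimate but the bookkeeping needed to route the H\"older seminorm of $\dot{\boldsymbol\chi}$ through the lifting $\boldsymbol\psi$ and then through the classical parabolic regularity estimate while keeping every constant genuinely independent of $t$ (and of $h$); the $t$-independence of constants rests crucially on the contractivity — not merely boundedness — of the semigroup from Proposition~\ref{prop:4}(c), so I would be careful to use contraction bounds $\|e^{\tau A}\|\le1$ everywhere rather than a generic $\|e^{\tau A}\|\le Ce^{\omega\tau}$, and to track that the $h$-independence is inherited from the $h$-independent constant in Proposition~\ref{prop:4}(a).
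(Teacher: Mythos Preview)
Your proposal is correct and follows essentially the same strategy as the paper: the paper also lifts the boundary data via the steady-state problem \eqref{eq:BB20} (their $\bs w_\chi$ is your $\bs\psi$), reduces to an abstract Cauchy problem for $\bs w_0=\bs w-\bs w_\chi$ with forcing $\bs f=\bs w_\chi-\dot{\bs w}_\chi$ (your $\bs F$), and then applies the contractive-analytic-semigroup estimates of Theorem~\ref{th:semigroup} together with \eqref{eq:NEW24} to obtain \eqref{eq:26a}--\eqref{eq:26c}. Your attention to contractivity (rather than mere boundedness) for $t$-uniform constants and to the $h$-independence via Proposition~\ref{prop:4}(a) is exactly right.
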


\begin{proof}
The proof is based on the decomposition of the solution of \eqref{eq:AA18} into a sum $\bs w=\bs w_\chi+\bs w_0$, where $\bs w_\chi$ takes care of the data (using Proposition \ref{prop:4}), while $\bs w_0$ will be handled using a Cauchy problem (Theorem \ref{th:semigroup}). 

Let $\bs w_\chi(t)$ be the solution of \eqref{eq:BB20} with $\bs g=0$, $\bs\xi_D=\bs\chi_D(t)$ and $\bs\xi_N=\bs\chi_N(t)$, and note that $\dot{\bs w}_\chi=\bs w_{\dot\chi}\in \mathcal C^\theta(\mathbb R_+;\bs D)$, since we have applied a time-independent operator to the transmission data. Let now  
$\bs f:=\bs w_\chi-\dot{\bs w}_\chi\in \mathcal C^\theta(\mathbb R_+;\bs H),$ which satisfies $\bs f(0)=0.$
Note that for all $t\ge 0$
\begin{subequations}
\begin{alignat}{6}
\label{eq:27a}
\|\bs w_\chi(t)\|_{1,\RdG}+\| \Delta\bs w_\chi(t)\|_\RdG
	&\le C \|\bs \chi(t)\|_{\bs H_\Gamma}, \\
\label{eq:27b}
\| \bs f(t)\|_\RdG
	& \le C (\|\bs \chi(t)\|_{\bs H_\Gamma}
		+\| \dot{\bs\chi}(t)\|_{\bs H_\Gamma}),\\
\label{eq:27c}
| \bs f|_{\theta,t,\bs H}
 	&\le C( |\bs\chi|_{\theta,t,\bs H_\Gamma}+ 
 	|\dot{\bs\chi}|_{\theta,t,\bs H_\Gamma}),
\end{alignat}
\end{subequations}
with a constant $C$ depending exclusively on the parameters and geometry. Let finally $\bs w_0:\mathbb R_+\to D(A)$ be the solution of
\[
\dot{\bs w}_0(t)=A\bs w(t)+\bs f(t) \qquad t\ge 0, \qquad \bs w(0)=0.
\]
By Theorem \ref{th:semigroup}, $\bs w_0$ and therefore $\bs w$ have the required regularity. Using the bound for $\bs w_0$ in Theorem \ref{th:semigroup} and \eqref{eq:27a}-\eqref{eq:27b}, we can easily prove \eqref{eq:26a}. Using the bound for $A\bs w_0$ in Theorem \ref{th:semigroup} and \eqref{eq:27a}-\eqref{eq:27c}, we can prove that
\[
\|\Delta \bs w(t)\|_\RdG \le 
C \left(\|\bs\chi(t)\|_{\bs H_\Gamma}
+ \|\dot{\bs\chi}(t)\|_{\bs H_\Gamma}
+\theta^{-1} t^\theta |\bs\chi|_{t,\theta,\bs H_\Gamma}
+\theta^{-1} t^\theta |\dot{\bs\chi}|_{t,\theta,\bs H_\Gamma}\right).
\]
Proving \eqref{eq:26b} from the above estimate is the result of a simple computation.  Finally, in view of \eqref{eq:NEW24}, the estimate
\begin{alignat*}{6}
\|\nabla\bs w(t)\|_\RdG & \le 
C(\|\nabla\bs w_\chi(t)\|_\RdG + \| \bs w_0(t)\|_\RdG+\| A\bs w_0(t)\|_\RdG) \\
& \le C' (\|\bs w_\chi(t)\|_{1,\RdG}
	+\| \bs w(t)\|_\RdG+\|\Delta\bs w(t)\|_\RdG)
\end{alignat*}
follows and therefore \eqref{eq:26c} is a simple consequence of \eqref{eq:27a}, \eqref{eq:26a}, and \eqref{eq:26b}.
\end{proof}

\subsection{Main results on the semidiscrete problem}\label{sec:3.2}

The first step towards the analysis of the two problems that are hidden in \eqref{eqn:tdtransmission} is the reconciliation of the solution of the classical differential equation \eqref{eq:AA18} with a distributional form, where we look for $\bs w\in \td(\bs D)$ such that
\begin{subequations}\label{eq:BBB100}
\begin{alignat}{6}
\dot{\bs w} &=A_\star \bs w & \qquad & (\mbox{in $\bs H$}),\\
\mathcal B\bs w-\bs\eta
&\in \bs M & & (\mbox{in $\bs H_\Gamma$}),
\end{alignat}
\end{subequations}
for given data $\bs\eta\in \td(\bs H_\Gamma)$. 

\begin{proposition}
Problem \eqref{eq:BBB100} has a unique solution.
\end{proposition}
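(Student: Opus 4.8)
The plan is to reduce the distributional problem \eqref{eq:BBB100} to the strongly-posed Cauchy-type problem \eqref{eq:AA18} treated in Proposition \ref{prop:333}, exploiting the fact that a $\td(\bs H_\Gamma)$-valued distribution is, after sufficiently many antiderivatives, a continuous (indeed, as smooth as desired) causal function. Concretely, since $\bs\eta\in\td(\bs H_\Gamma)$, there exists an integer $k$ and a causal continuous function $\bs\eta_{(-k)}:\mathbb R_+\to\bs H_\Gamma$ with $\bs\eta=\partial_t^k\bs\eta_{(-k)}$ in the sense of distributions, and we may take $k$ large enough that $\bs\chi:=\bs\eta_{(-k)}$ satisfies the hypotheses of Proposition \ref{prop:333}, namely $\dot{\bs\chi}\in\mathcal C^\theta(\mathbb R_+;\bs H_\Gamma)$ with $\bs\chi(0)=\dot{\bs\chi}(0)=0$ (taking two extra antiderivatives beyond continuity buys the vanishing initial conditions, and one more buys the H\"older regularity of the derivative). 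Let $\bs W$ be the solution of \eqref{eq:AA18} with this data $\bs\chi$, provided by Proposition \ref{prop:333}. I then set $\bs w:=\partial_t^k\bs W$, the $k$-th distributional time derivative of (the causal extension of) $\bs W$.

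The next step is verification that $\bs w$ solves \eqref{eq:BBB100}. Since $\bs W\in\mathcal C^\theta(\mathbb R_+;\bs D)\cap\mathcal C^1$ with $\dot{\bs W}=A_\star\bs W$ and $\bs W(0)=0$, the causal extension of $\bs W$ is a $\bs D$-valued causal distribution, and all the steady-state operators involved ($A_\star$, $\bs\gamma_D$, $\bs\gamma_N$, multiplication by $\bs\kappa$) are time-independent and continuous, hence commute with distributional differentiation in $t$. Differentiating $\dot{\bs W}=A_\star\bs W$ distributionally $k$ times gives $\dot{\bs w}=A_\star\bs w$ in $\bs H$; differentiating the inclusions $\bs\gamma_D\bs W-\bs\chi_D\in\bs M^{1/2}$ and $\bs\gamma_N\bs\kappa\bs W-\bs\chi_N\in\bs M^{-1/2}$ $k$ times (using that $\bs M^{1/2},\bs M^{-1/2}$ are closed subspaces, so the space of $\bs M^{\pm1/2}$-valued distributions is closed under differentiation) yields $\mathcal B\bs w-\bs\eta\in\bs M$. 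One must also check $\bs w\in\td(\bs D)$: this follows because $\bs W$ and its derivative grow polynomially in $t$ (the bounds in Proposition \ref{prop:333} are at worst of the form $t\,\|\bs\chi\|_{L^\infty}$, and $\|\bs\chi\|_{L^\infty}$, $|\dot{\bs\chi}|_{t,\theta}$ grow polynomially since $\bs\eta$ is tempered), so $\bs W$ is a tempered causal $\bs D$-valued distribution, and differentiation preserves $\td$.

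For uniqueness I would argue via the Laplace transform, as is standard in this framework (cf.\ \cite[Chapter 3]{Sayas2016}): if $\bs w\in\td(\bs D)$ solves the homogeneous problem ($\bs\eta=0$), then its Laplace transform $\bs W(s)\in\bs D$ satisfies, for each $s\in\mathbb C_+$, the steady-state problem $s\bs W(s)=A_\star\bs W(s)$ with $\bs\gamma_D\bs W(s)\in\bs M^{1/2}$ and $\bs\gamma_N\bs\kappa\bs W(s)\in\bs M^{-1/2}$, i.e.\ $\bs W(s)\in D(A)$ and $s\bs W(s)=A\bs W(s)$. Since $A$ is maximal dissipative (Proposition \ref{prop:4}(b)), its spectrum lies in $(-\infty,0]$, so $s\notin\sigma(A)$ for $s\in\mathbb C_+$ and hence $\bs W(s)=0$ for all such $s$; by injectivity of the Laplace transform on causal distributions, $\bs w=0$. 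Alternatively, rescaling $s\mapsto s$ and pairing $s\bs W(s)=A\bs W(s)$ against $\bs W(s)$ in $\bs H$ and using the dissipativity identity \eqref{eq:BB22} gives $s\|\bs W(s)\|_H^2=-|\bs W(s)|_V^2$, whose real part forces $\bs W(s)=0$ on $\mathbb C_+$. The main obstacle — really the only place requiring care — is the bookkeeping in the first paragraph: confirming that one can choose the order of integration $k$ so that the antiderivative $\bs\chi$ genuinely meets \emph{all} the hypotheses of Proposition \ref{prop:333} simultaneously (the H\"older regularity of $\dot{\bs\chi}$ and the double vanishing at $t=0$), which is a matter of invoking the structure theorem for $\td(X)$-distributions carefully; everything after that is commutation of distributional differentiation with bounded time-independent operators.
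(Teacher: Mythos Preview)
Your argument is correct, but it follows a genuinely different route from the paper's. The paper works entirely in the Laplace domain: it transforms the data, writes down for each $s\in\mathbb C_+$ the equivalent coercive variational problem~\eqref{eq:3232}, and then establishes a transfer-function bound of the form $\|\bs W(s)\|_{1,\RdG}\le C(\mathrm{Re}\,s)\,|s|^\nu\,\|\mathrm H(s)\|_{\bs H_\Gamma}$ (via a Bamberger--HaDuong lifting for the Dirichlet part plus a coercivity estimate for the remainder), which guarantees that $\bs W$ is the Laplace transform of an element of $\td(\bs D)$. Your approach stays in the time domain for existence: you antidifferentiate the data until it is regular enough for Proposition~\ref{prop:333}, solve the strong Cauchy problem there, and then differentiate back; uniqueness you handle as the paper implicitly does, through the resolvent of $A$. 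What the paper's route buys is a self-contained argument that produces explicit Laplace-domain bounds as a byproduct (useful later in the CQ analysis) and does not rely on the structure theorem for $\td(X)$. What your route buys is economy---you reuse the semigroup machinery already built in Proposition~\ref{prop:333} and avoid the lifting/coercivity computation entirely---and it is arguably more in keeping with the paper's stated preference for time-domain arguments. The bookkeeping you flag (choosing $k$ large enough that the antiderivative is causal, $\mathcal C^{1+\theta}$, vanishes to first order at $t=0$, and grows polynomially) is indeed routine once one has the representation $\bs\eta=\partial_t^m g$ with $g$ continuous, causal, and polynomially bounded; two further antiderivatives of $g$ do the job.
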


\begin{proof}
Let $\mathrm H=(\mathrm H_D,\mathrm H_N):=\mathcal L\{\bs\eta\}$. The $s$-dependent transmission problem 
\begin{subequations}\label{eq:3131}
\begin{alignat}{6}
s\bs W(s) & =A_\star \bs W(s),\\
\mathcal B \bs W(s)-\mathrm H(s)
& \in \bs M,
\end{alignat}
\end{subequations}
is equivalent to the variational problem
\begin{subequations}\label{eq:3232}
\begin{alignat}{6}
& \bs W(s)\in \bs V,\\
& \bs \gamma_D \bs W(s)-\mathrm H_D(s)\in \bs M^{1/2},\\
& [\bs W(s),\bs w]+s(\bs V(s),\bs w)_{\bs H}=
	\llangle \mathrm H_N(s),\bs\gamma_D\bs w\rrangle_\Gamma
	\qquad\forall \bs w\in \bs V_h,
\end{alignat}
\end{subequations}
for all $s\in \mathbb C_+$,
which can be easily proved using the techniques of the proof of Proposition \ref{prop:4}. We will prove that \eqref{eq:3232} is uniquely solvable and that its solution can be bounded as
\begin{equation}\label{eq:3333}
\| \bs W(s)\|_\RdG+\| \nabla \bs W(s)\|_\RdG \le C(\mathrm{Re}\,s) |s|^\nu \, \| \mathrm H(s)\|_{\bs H_\Gamma} \qquad \forall s\in \mathbb C_+,
\end{equation}
for some $\nu \ge 0$ and non-increasing $C:(0,\infty)\to (0,\infty)$ that is allowed to grow rationally at the origin. These statements imply that $\bs W=\mathcal L\{\bs w\}$ where $\bs w\in \td(\bs D)$ (note that the needed bounds for the Laplacian of $\bs W(s)$ follow from equation \eqref{eq:3131}) and $\bs w$ satisfies \eqref{eq:BBB100}, which is the inverse Laplace transform of \eqref{eq:3131}.

In order to deal with \eqref{eq:3232} and \eqref{eq:3333}, we proceed as follows. For fixed $s\in \mathbb C_+$, we consider the coercive transmission problem
\begin{subequations}\label{eq:3434}
\begin{alignat}{6}
 -|s|\bs W_D(s)+A_\star\bs W_D(s) & =0, \\
 \label{eq:3434b}
 \bs \gamma \bs W_D(s) &=\Theta_D^{-1} \mathrm H_D(s),
\end{alignat}
\end{subequations}
and note that the four separate boundary conditions in \eqref{eq:3434b} are equivalent to the transmission conditions $\bs\gamma_D \bs W_D(s)=\mathrm H_D(s)$. Using the Bamberger-HaDuong lifting lemma (the original appears in \cite{BaHa1986} and an `extension' to non-smooth boundaries can be found as Lemma 2.7.1 in \cite{Sayas2016}), it follows that
\begin{alignat}{6}
\nonumber
\triple{\boldsymbol W_D(s)}_{|s|}^2
& := \|\boldsymbol\kappa^{1/2}\nabla \boldsymbol W_D(s)\|_\RdG^2+ |s|\, \|\boldsymbol\rho^{1/2} \boldsymbol W_D(s)\|_\RdG^2 \\
\label{eq:3434B}
& \le C \,\left( \frac{|s|}{\min\{1,\mathrm{Re}\,s\}}\right)^{1/2} \| H_D(s)\|^2_{1/2,\Gamma}.
\end{alignat}
We then consider the coercive variational problem
\begin{subequations}
\begin{alignat}{6}
& \boldsymbol W_0(s)\in \boldsymbol V_h,\\
\label{eq:3535b}
& [\boldsymbol W_0(s),\boldsymbol w]+s(\boldsymbol W_0(s),\boldsymbol w)_{\boldsymbol H}
	= && \llangle \mathrm H_N(s),\boldsymbol\gamma_D\boldsymbol w\rrangle_\Gamma \\
	&&& -[\boldsymbol W_D(s),\boldsymbol w]
	-s(\boldsymbol W_D(s),\boldsymbol w)_{\boldsymbol H} \quad \forall \boldsymbol w\in \boldsymbol V_h.
\end{alignat}
\end{subequations}
Testing \eqref{eq:3535b} with $\boldsymbol w=\overline{s^{1/2}\boldsymbol W_0(s)}$, we have the inequalities
\begin{alignat*}{6}
(\mathrm{Re}\,\overline s^{1/2}) \triple{\boldsymbol W_0(s)}_{|s|}^2
	&=\mathrm{Re}\,
		\left(\overline s^{1/2}\left( [\boldsymbol W_0(s),\overline{\boldsymbol W_0(s)}]
			+s (\boldsymbol W_0(s),\overline{\boldsymbol W_0(s)})_{\boldsymbol H}\right)\right) \\
	& \le (\mathrm{Re}\, s^{1/2}) 
		\left( \|\mathrm H_N(s)\|_{-1/2,\Gamma}\| \boldsymbol\gamma_D\boldsymbol W_0(s)\|_{1/2,\Gamma}
			+ \triple{\boldsymbol W_D(s)}_{|s|}\triple{\boldsymbol W_0(s)}_{|s|}\right).
\end{alignat*}
What is left for the proof is very simple indeed. First of all, it is clear that $\bs W(s):=\bs W_D(s)+\bs W_0(s)$ is the solution to \eqref{eq:3232}. Second, it is simple to see that
\[
\|\bs\gamma_D\bs W_D(s)\|_{1/2,\Gamma}
\le \frac{C}{\min\{1,\mathrm{Re}\,s^{1/2}\}}\triple{\bs W_D(s)}_{|s|}
\le \frac{C}{\min\{1,\mathrm{Re}\,s\}}\triple{\bs W_D(s)}_{|s|},
\]
which yields a bound for $\triple{\bs W_0(s)}_{|s|}$ in terms of $\|\mathrm H_N(s)\|_{-1/2,\Gamma}$ and $\triple{\bs W_D(s)}_{|s|}$. Finally \eqref{eq:3434B} can be used to prove \eqref{eq:3333}.
\end{proof}

The following process mimics the one in \cite{HaQiSaSa2015} and in \cite[Chapter 7]{Sayas2016}. It involves two aspects: (a) an extension of zero of the data to negative values of the time variable; (b) a hypothesis on polynomial growth of the data. The reason to deal with (a) lies in the fact that the distributional equations \eqref{eq:BBB100} are for causal distributions of the real variable, not for distributions defined in the positive real axis. This is due to the fact that the heat potentials and operators have memory terms that involve the entire history of the process including values at time $t=0$. The extension by zero to negative time will be done through the operator
\[
E f(t)=\begin{cases} f(t), & t\ge 0, \\ 0, & t< 0.\end{cases}
\]
The reason why (b) is important is the fact that the equation \eqref{eq:BBB100} can be shown to have a unique solution in the space $\td(\bs D)$, which imposes some restrictions on the growth of the solution (and hence the data) at infinity. 

\begin{proposition}\label{prop:55}
Let $\bs\chi:\mathbb R_+\to \bs H_\Gamma$ be continuous and $\dot{\bs\chi}\in \mathcal C^\theta(\mathbb R_+;\bs H_\Gamma)$ be polynomially bounded in the following sense: there exist $C>0$ and $m\ge 0$ such that
\[
\|\dot{\bs\chi}(t)\|_{\bs H_\Gamma}+|\dot{\bs\chi}|_{t,\theta,\bs H_\Gamma}
\le C t^m \qquad \forall t\ge 1.
\]
Assume also that $\bs\chi(0)=\dot{\bs\chi}(0)=0$. 
If $\bs w$ is the solution to \eqref{eq:AA18}, then $\bs v=E\bs w$ is the solution to \eqref{eq:BBB100} with $\bs\eta=E\bs\chi$.
\end{proposition}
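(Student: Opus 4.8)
The plan is to extend $\bs w$ by zero to the whole real line, pass to the Laplace domain, recognize the transformed system as the $s$-dependent problem \eqref{eq:3131}, and close the argument with the uniqueness of \eqref{eq:BBB100} established in the preceding proposition together with injectivity of the Laplace transform.

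First I would record the growth of $\bs w$. Because $\bs\chi(0)=0$, we have $\bs\chi(t)=\int_0^t\dot{\bs\chi}$, so the polynomial bound assumed on $\dot{\bs\chi}$ propagates to $\bs\chi$; inserting this into \eqref{eq:26a}--\eqref{eq:26c} shows that $\bs w$ and $\dot{\bs w}=A_\star\bs w$ are polynomially bounded as $\bs D$- and $\bs H$-valued functions, respectively (they are continuous -- in fact $\mathcal C^\theta$ -- by Proposition~\ref{prop:333}). Hence $\bs v:=E\bs w$ is a causal tempered $\bs D$-valued distribution and $\bs\eta:=E\bs\chi$ a causal tempered $\bs H_\Gamma$-valued distribution, with Laplace transforms $\bs W:=\mathcal L\{\bs v\}$, $\mathrm H:=\mathcal L\{\bs\eta\}$ analytic on $\mathbb C_+$; estimating $\|\bs W(s)\|_{\bs D}\le\int_0^\infty e^{-(\mathrm{Re}\,s)t}\|\bs w(t)\|_{\bs D}\,\mathrm dt$ with the polynomial bound yields a symbol estimate of the form \eqref{eq:CCC1}, so in fact $\bs v\in\td(\bs D)$ and $\bs\eta\in\td(\bs H_\Gamma)$.

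Next I would transform the two conditions of \eqref{eq:AA18}. The crucial -- and only slightly delicate -- point is that $\bs w(0)=0$ and $\bs w$ is continuously differentiable up to $t=0$, so the distributional derivative of $E\bs w$ carries no Dirac mass at the origin: $\dot{\bs v}=E(\dot{\bs w})=E(A_\star\bs w)$. Since $A_\star\colon\bs D\to\bs H$ is bounded and $\bs w$ is continuous into $\bs D$, one can pull $A_\star$ through the Laplace (Bochner) integral, while causality gives $\mathcal L\{\dot{\bs v}\}(s)=s\bs W(s)$; hence $s\bs W(s)=A_\star\bs W(s)$. The time-independent bounded operator $\mathcal B$ likewise commutes with $E$ and with the Laplace integral, so $\mathcal L\{\mathcal B\bs v\}(s)=\mathcal B\bs W(s)$, and since \eqref{eq:AA18b}--\eqref{eq:AA18c} say $\mathcal B\bs w(t)-\bs\chi(t)\in\bs M$ for every $t\ge0$, with $\bs M=\bs M^{1/2}\times\bs M^{-1/2}$ a \emph{closed} subspace of $\bs H_\Gamma$ (the polars $X_h^\circ$, $Y_h^\circ$ and the spaces $X_h$, $Y_h$ all being closed), the integral $\mathcal B\bs W(s)-\mathrm H(s)=\int_0^\infty e^{-st}\big(\mathcal B\bs w(t)-\bs\chi(t)\big)\,\mathrm dt$ stays in $\bs M$. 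Thus $\bs W$ solves \eqref{eq:3131} with datum $\mathrm H$. (Alternatively, one can skip the transform and argue directly that $E$ commutes with $\partial_t$, with $A_\star$ and with $\mathcal B$ at the level of $\bs D$-valued distributions, and that $E(\mathcal B\bs w-\bs\chi)$ is $\bs M$-valued because $\mathcal B\bs w(t)-\bs\chi(t)\in\bs M$ pointwise; either way $\bs v$ satisfies \eqref{eq:BBB100} with $\bs\eta=E\bs\chi$.)

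Finally, the preceding proposition shows that \eqref{eq:BBB100} -- equivalently, after transforming, the coercive variational problem \eqref{eq:3232} -- has a unique solution; combined with the previous step, this identifies $\bs v=E\bs w$ as \emph{the} solution. I expect the main obstacle to be exactly the reconciliation between the strong formulation on $\mathbb R_+$ and the distributional one on $\mathbb R$: one must check that extension by zero introduces no jump term in $\partial_t$ (this is where $\bs w(0)=0$ is used), that it genuinely commutes with the spatial operators $A_\star$ and $\mathcal B$, and that membership in the closed space $\bs M$ survives the Laplace (Bochner) integral; the rest is routine bookkeeping with the causal Laplace transform.
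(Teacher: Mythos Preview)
Your proposal is correct and follows essentially the same approach as the paper: establish polynomial growth of $\bs w$ via Proposition~\ref{prop:333} so that $E\bs w\in\td(\bs D)$ and $E\bs\chi\in\td(\bs H_\Gamma)$, use $\bs w(0)=0$ to get $\dot{\bs v}=E\dot{\bs w}$, and commute $E$ with the spatial operators $A_\star$ and $\mathcal B$. The paper's proof is extremely terse (three sentences) and argues directly at the level of distributions rather than passing through the Laplace domain; your Laplace-domain detour and explicit appeal to closedness of $\bs M$ and to uniqueness from the preceding proposition are not wrong, just more elaborate than what the paper records.
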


\begin{proof}
The hypotheses imply that $E\bs\chi\in \td(\bs H_\Gamma)$. The bounds of Proposition \ref{prop:333} imply that $\bs w$ is polynomially bounded as a $\bs D$-valued function and therefore $\bs v:=E\bs w\in \td(\bs D)$. Finally, since $\bs w(0)=0$, it follows that $E\dot{\bs w}=\dot{\bs v}$, which finishes the proof, since $E$ commutes with any operator that does not affect the time variable.
\end{proof}

We are almost ready to state and prove the two main results concerning the semidiscrete system: semidiscrete stability and an error estimate. To shorten up some of the expressions, to come, we introduce the bounded jump operator
\[
\bs D \ni \bs u=(u_-,u_+)
\longmapsto \bs J\bs u:=(\jump{\gamma u_-}, \jump{\partial_\nu u_-})\in H^{1/2}_\Gamma\times H^{-1/2}_\Gamma=:H_\Gamma.
\]
We also consider the function spaces tagged in the parameter $\theta\in (0,1)$
\begin{alignat*}{6}
\mathcal B^\theta :=\{ \bs\eta\in \mathcal C(\mathbb R;H_\Gamma)\,:\,
	& \bs\eta\equiv 0 \mbox{ in $(-\infty,0)$}, \quad
	 \bs\eta|_{\mathbb R_+}\in \mathcal C^\theta(\mathbb R_+;H_\Gamma),\\
	& \exists C,m\ge 0 \mbox{ s.t. }
	\| \bs\eta(t)\|_{H_\Gamma}+|\bs\eta|_{t,\theta,H_\Gamma}\le C t^m \quad \forall t\ge 1\},\\
\mathcal B^{1+\theta} :=\{ \bs\eta:\mathbb R \to H_\Gamma\,:\,
	& \bs\eta\equiv 0 \mbox{ in $(-\infty,0)$}, \quad \dot{\bs\eta}\in \mathcal B^\theta\},\\
\mathcal U^\theta :=\{\bs u\in \mathcal C^1(\mathbb R;\bs H)\,:\,
	& \bs u\equiv 0 \mbox{ in $(-\infty,0)$}, \quad 
	\bs u|_{\mathbb R_+}\in \mathcal C^\infty(\mathbb R_+;\bs D)\}.
\end{alignat*}

\begin{theorem}\label{the:3.6}
If $\bs\beta\in \mathcal B^{1+\theta}$, $\bs\psi^h=(\phi^h,\lambda^h)$ is the solution to the semidiscrete system of TDBIE \eqref{eq:8} and $\bs u^h=(u^h_-,u^h_+)$ is given by the potential representation \eqref{eq:9}, then $\bs u^h\in \mathcal U^\theta$, $\bs\psi^h\in \mathcal B^\theta$, and we can bound
\[
\|\bs u^h(t)\|_{1,\RdG}+\|\bs \psi^h(t)\|_{H_\Gamma}
\le C \max\{1,t\} \mathrm{Acc}(\bs\beta,t,\theta),
\]
where
\begin{equation}
\mathrm{Acc}(\bs\beta,t,\theta)
:=\max_{0\le \tau\le t}\|\bs\beta(\tau)\|_{H_\Gamma}
+\max_{0\le \tau\le t}\|\dot{\bs\beta}(\tau)\|_{H_\Gamma}
+ \frac{t^\theta}\theta\sup_{0\le \tau_1<\tau_2\le t}
\frac{\| \bs\beta(\tau_1)-\bs\beta(\tau_2)\|_{H_\Gamma}}{|\tau_1-\tau_2|^\theta},
\end{equation}
is a collection of cummulative seminorms in $\mathcal B^{1+\theta}$.
\end{theorem}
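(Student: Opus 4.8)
The plan is to reduce Theorem~\ref{the:3.6} to the concrete estimates of Proposition~\ref{prop:333}, via the translation machinery of Proposition~\ref{prop:55} and the equivalence of the semidiscrete TDBIE system~\eqref{eq:8}--\eqref{eq:9} with the exotic transmission problem~\eqref{eqn:tdtransmission} (equivalently, the distributional problem~\eqref{eq:BBB100}) set up in the functional framework of Section~3.2. First I would recall that, as observed after~\eqref{eqn:tdtransmission}, the solution of the semidiscrete system~\eqref{eq:8}--\eqref{eq:9} corresponds to taking $\lambda=0$, $\phi=0$ in~\eqref{eqn:tdtransmission}, so that the boundary data for the associated evolutionary problem~\eqref{eq:AA18} are $\bs\chi_D=(\beta_0,0,0,0)$ and $\bs\chi_N=(0,0,0,\beta_1)$; in particular $\|\bs\chi\|_{\bs H_\Gamma}$, $\|\dot{\bs\chi}\|_{\bs H_\Gamma}$ and the relevant H\"older seminorm are all controlled by the corresponding quantities for $\bs\beta=(\beta_0,\beta_1)\in\mathcal B^{1+\theta}$. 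The hypothesis $\bs\beta\in\mathcal B^{1+\theta}$ provides exactly the regularity $\dot{\bs\chi}\in\mathcal C^\theta(\mathbb R_+;\bs H_\Gamma)$ with $\bs\chi(0)=\dot{\bs\chi}(0)=0$ and polynomial boundedness required by Propositions~\ref{prop:333} and~\ref{prop:55}.

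Next I would invoke Proposition~\ref{prop:333} to obtain a solution $\bs w:\mathbb R_+\to\bs D$ of~\eqref{eq:AA18} with $\dot{\bs w}=A_\star\bs w\in\mathcal C^\theta(\mathbb R_+;\bs H)$ and $\bs w\in\mathcal C^\theta(\mathbb R_+;\bs V)$, and then apply Proposition~\ref{prop:55} to conclude that $E\bs w\in\td(\bs D)$ solves the distributional problem~\eqref{eq:BBB100}. By uniqueness (the Proposition preceding Proposition~\ref{prop:55}), $E\bs w$ is the genuine semidiscrete solution, so its components $(w_-,w_+)$ coincide with $(u_-^h,u_+^h)$ from~\eqref{eq:9}, and the boundary unknowns are recovered through the last two transmission conditions of~\eqref{eqn:tdtransmission} with $\lambda=\phi=0$, namely $\lambda^h=\jump{\partial_\nu w_-}=(\bs J\bs w)_2$ and $\phi^h=\jump{\gamma w_-}=(\bs J\bs w)_1$. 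Since $\bs J:\bs D\to H_\Gamma$ is bounded, $\|\bs\psi^h(t)\|_{H_\Gamma}\le C\|\bs w(t)\|_{\bs D}\le C(\|\bs w(t)\|_{1,\RdG}+\|\Delta\bs w(t)\|_\RdG)$, and the membership $\bs u^h\in\mathcal U^\theta$, $\bs\psi^h\in\mathcal B^\theta$ follows from the $\mathcal C^\theta$-regularity in Proposition~\ref{prop:333}, the smoothing of analytic semigroups (Theorem~\ref{th:semigroup}) for the interior regularity $\bs u^h|_{\mathbb R_+}\in\mathcal C^\infty(\mathbb R_+;\bs D)$, and the polynomial bounds built into $\mathcal B^{1+\theta}$.

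It then remains to assemble the quantitative bound. Combining~\eqref{eq:26a}, \eqref{eq:26b}, \eqref{eq:26c} from Proposition~\ref{prop:333}, the right-hand sides are, up to the constants $C_1,C_2,C_3$, each dominated by
\[
\max\{1,t\}\Big(\|\bs\chi\|_{L^\infty(0,t;\bs H_\Gamma)}+\|\dot{\bs\chi}\|_{L^\infty(0,t;\bs H_\Gamma)}+\tfrac{t^\theta}{\theta}|\dot{\bs\chi}|_{t,\theta,\bs H_\Gamma}\Big),
\]
and the factor in parentheses is exactly $\mathrm{Acc}(\bs\beta,t,\theta)$ after we bound $\|\bs\chi\|_{L^\infty}$ by $\max_{0\le\tau\le t}\|\bs\beta(\tau)\|_{H_\Gamma}$ (using $\bs\chi(0)=0$ this is also controlled, if one prefers, by $t\|\dot{\bs\chi}\|_{L^\infty}$, matching the $\max_{0\le\tau\le t}\|\dot{\bs\beta}(\tau)\|_{H_\Gamma}$ term), and identify the H\"older seminorm of $\dot{\bs\chi}$ with that of $\dot{\bs\beta}$. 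Together with $\|\bs\psi^h(t)\|_{H_\Gamma}\le C(\|\bs w(t)\|_{1,\RdG}+\|\Delta\bs w(t)\|_\RdG)$ this gives the claimed estimate with $\|\bs u^h(t)\|_{1,\RdG}\approx\|\bs w(t)\|_{1,\RdG}$.

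The only genuinely delicate point is the identification in the second paragraph: one must check carefully that the strong solution of~\eqref{eq:AA18} supplied by Proposition~\ref{prop:333}, after extension by $E$, really is the distributional semidiscrete solution, i.e.\ that the row equations~\eqref{eq:8} (the Galerkin-tested TDBIE) are equivalent to~\eqref{eqn:tdtransmission}/\eqref{eq:BBB100} with $\lambda=\phi=0$ — this is the content of the remark "It can be shown by Theorem~\ref{th:2} that \eqref{eq:8}--\eqref{eq:9} is equivalent to the above system with $\lambda=0,\phi=0$," which I would expand using the jump relations of the potentials and the polar-set characterization of the Galerkin conditions. Everything else is bookkeeping: matching norms on the product spaces, tracking that the constants from Proposition~\ref{prop:333} are $h$-independent, and simplifying the three bounds into the single $\mathrm{Acc}$ functional.
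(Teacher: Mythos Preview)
Your proposal is correct and follows essentially the same route as the paper's own proof: choose $\bs\chi_D=(\beta_0,0,0,0)$, $\bs\chi_N=(0,0,0,\beta_1)$, identify the semidiscrete potential pair $\bs u^h$ with the solution of the exotic transmission problem \eqref{eq:BBB100} (equivalently \eqref{eq:AA18} after applying Proposition~\ref{prop:55}), set $\bs\psi^h=\bs J\bs u^h$, and then read off the bound from the three estimates \eqref{eq:26a}--\eqref{eq:26c} of Proposition~\ref{prop:333} together with the trivial control $\|\bs\psi^h(t)\|_{H_\Gamma}\le C(\|\bs u^h(t)\|_{1,\RdG}+\|\Delta\bs u^h(t)\|_\RdG)$. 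Your write-up is in fact more explicit than the paper's (which compresses all of this into three sentences), and your flagging of the identification step as the only ``delicate point'' is apt. One cosmetic remark: the H\"older term that emerges from Proposition~\ref{prop:333} is $|\dot{\bs\chi}|_{t,\theta,\bs H_\Gamma}$, while the third summand in the stated $\mathrm{Acc}(\bs\beta,t,\theta)$ is the H\"older seminorm of $\bs\beta$ rather than $\dot{\bs\beta}$; this mismatch is present in the paper's statement as well and is not a defect of your argument.
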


\begin{proof}
If $\bs\beta=(\beta_0,\beta_1)$, we define $\bs\chi_D:=(\beta_0,0,0,0)$ and $\bs\chi_N=(0,0,0,\beta_1)$. Let $\bs u^h$ be the solution to \eqref{eq:BBB100} with data $\bs\eta=(\bs\chi_D,\bs\chi_N)$ and $\bs\psi^h:=\bs J\bs u^h$. We can identify $\bs u^h$ and $\bs  \psi^h$  with the solution of \eqref{eq:8} and \eqref{eq:9}. We also note that
\[
\|\bs u^h(t)\|_{1,\RdG}+\|\bs \psi^h(t)\|_{H_\Gamma}
\le C( \|\bs u^h(t)\|_\RdG+\|\nabla\bs u^h(t)\|_\RdG+\|\Delta\bs u^h(t)\|_\RdG).
\]
The bounds in the statement of the theorem follow from the fact that Proposition \ref{prop:55} identifies $\bs u^h|_{\mathbb R_+}$ with the solution of \eqref{eq:AA18}, with $(\bs\chi_D,\bs\chi_N)|_{\mathbb R_+}$ as data, and we can thus use the estimates of Proposition \ref{prop:333}.
\end{proof}

\begin{theorem}
For data $(\beta_0,\beta_1)\in \td(H_\Gamma)$, we let
\begin{itemize}
\item[\rm (a)] $\bs\psi=(\phi,\lambda)$ be the solution of the TDBIE \eqref{eq:2.8},
\item[\rm (b)] $\bs u=(u_-,u_+)$ be given by the potential representation \eqref{eq:2.7},
\item[\rm (c)] $\bs\psi^h=(\phi^h,\lambda^h)$ be the solution of the semidiscrete TDBIE \eqref{eq:8},
\item[\rm (d)] $\bs u^h=(u^h_-,u^h_+)$ be given by the potential representation \eqref{eq:9}.
\end{itemize}
If $\bs\psi\in \mathcal B^{1+\theta}$, then
\begin{equation}\label{eq:3.AA}
\| \bs u^h(t)-\bs u(t)\|_{1,\RdG}+\|\bs\psi^h(t)-\bs\psi(t)\|_{H_\Gamma}
\le C \max\{1,t\} \mathrm{Acc}(\bs\psi-\bs\Pi_h\bs\psi,t,\theta),
\end{equation}
where $\bs\Pi_h:H_\Gamma\to Y_h\times X_h$ is the best approximation operator onto $Y_h\times X_h$.
\end{theorem}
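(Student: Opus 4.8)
The plan is to reduce the error estimate \eqref{eq:3.AA} to an application of the stability result, Theorem~\ref{the:3.6}, via a Galerkin-orthogonality / Strang-type argument. First I would observe that the difference $(\bs u^h-\bs u, \bs\psi^h-\bs\psi)$ does not solve a transmission problem with data $\bs\beta=(\beta_0,\beta_1)$, because the continuous solution $\bs u$ does not satisfy the discrete transmission conditions involving $X_h^\circ$ and $Y_h^\circ$; this is exactly why \eqref{eq:3.8}--\eqref{eq:3.9} feature the extra densities $\lambda,\phi$ and why the ``exotic transmission problem'' \eqref{eqn:tdtransmission} was set up with those free parameters. The key algebraic point is that in \eqref{eqn:tdtransmission} (equivalently \eqref{eq:AA18}) the quantities $\lambda$ and $\phi$ enter only through the combinations $\jump{\gamma w_-}+\phi\in Y_h$ and $\jump{\partial_\nu w_-}+\lambda\in X_h$, so one is free to replace $(\phi,\lambda)$ by $(\phi-\varphi^h, \lambda-\mu^h)$ for any $(\varphi^h,\mu^h)\in Y_h\times X_h$ without changing the solution of the error problem. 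Choosing $(\varphi^h,\mu^h)=\bs\Pi_h\bs\psi$ (acting componentwise) replaces $(\phi,\lambda)$ by the best-approximation error $\bs\psi-\bs\Pi_h\bs\psi$.

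Concretely, the steps would be: (1) Recall from the discussion after \eqref{eqn:tdtransmission} that with $\beta_0=\beta_1=0$ the error pair $(e_-^h,e_+^h)$ of \eqref{eq:3.9} solves \eqref{eqn:tdtransmission}, hence (restricted to $\mathbb R_+$, in strong form) solves \eqref{eq:AA18} with $\bs\chi_D=(0,0,-\phi,0)$ and $\bs\chi_N=(0,-\kappa\lambda,0,0)$, where $(\phi,\lambda)=(\gamma^-u_-,\partial_\nu^-u_-)$ is fixed by \eqref{eq:2.4}. (2) Invoke the invariance just described: since $\phi$ appears in \eqref{eqn:tdtransmission} only via $\jump{\gamma w_-}+\phi\in Y_h$ and $\lambda$ only via $\jump{\partial_\nu w_-}+\lambda\in X_h$, the solution is unchanged if we use instead $\widetilde\phi:=\phi-\Pi_h^{1/2}\phi$ and $\widetilde\lambda:=\lambda-\Pi_h^{-1/2}\lambda$, i.e.\ the components of $\bs\psi-\bs\Pi_h\bs\psi$. (3) Therefore $(e_-^h,e_+^h)|_{\mathbb R_+}$ is the solution of \eqref{eq:AA18} with the modified transmission data $\bs\chi_D=(0,0,-\widetilde\phi,0)$, $\bs\chi_N=(0,-\kappa\widetilde\lambda,0,0)$, which is exactly of the form covered by Theorem~\ref{the:3.6} with $\bs\beta$ replaced by (a constant multiple of the components of) $\bs\psi-\bs\Pi_h\bs\psi$; the hypothesis $\bs\psi\in\mathcal B^{1+\theta}$, together with boundedness of $\bs\Pi_h$, guarantees $\bs\psi-\bs\Pi_h\bs\psi\in\mathcal B^{1+\theta}$ so the regularity hypotheses of Proposition~\ref{prop:55} and Proposition~\ref{prop:333} are met. (4) Finally, $\bs u^h-\bs u=e_-^h\chi_{\Omega_-}+e_+^h\chi_{\Omega_+}$ and $\bs\psi^h-\bs\psi=\bs J(e_-^h,e_+^h)$ by the same identification used in the proof of Theorem~\ref{the:3.6}, and applying the bound of that theorem (or directly Propositions~\ref{prop:333}--\ref{prop:55}) with data $\bs\psi-\bs\Pi_h\bs\psi$ yields \eqref{eq:3.AA} with the $\mathrm{Acc}(\bs\psi-\bs\Pi_h\bs\psi,t,\theta)$ on the right-hand side, the constant $C$ absorbing $\kappa$ and the norm of $\bs\Pi_h$.

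The main obstacle I expect is step (2): carefully verifying that the parameters $\phi,\lambda$ in the exotic transmission problem \eqref{eqn:tdtransmission} (and its strong/Hilbert-space incarnation \eqref{eq:AA18} with $\bs\chi_D=(\beta_0,0,-\phi,0)$, $\bs\chi_N=(0,-\kappa\lambda,0,\beta_1)$) genuinely enter only through $\jump{\gamma w_-}+\phi$ and $\jump{\partial_\nu w_-}+\lambda$ lying in $Y_h$ and $X_h$ respectively, so that shifting them by an element of $Y_h\times X_h$ is a symmetry of the problem. This requires tracking the roles of the third/fourth components of $\bs\chi_D$ and the second component of $\bs\chi_N$ through the definitions of $\bs M^{1/2}$, $\bs M^{-1/2}$ and the membership conditions \eqref{eq:AA18b}--\eqref{eq:AA18c}, and checking that none of the other transmission conditions (the ones feeding from $\beta_0,\beta_1$, or the $X_h^\circ,Y_h^\circ$ conditions) are disturbed. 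Once this invariance is established cleanly, the rest is bookkeeping: uniqueness of the solution of \eqref{eq:BBB100} (already proved) ensures that the two data choices produce the same $(e_-^h,e_+^h)$, and everything else is a direct citation of Theorem~\ref{the:3.6}.
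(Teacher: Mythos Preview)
Your approach is correct and is essentially the paper's own: both exploit the invariance of the error problem \eqref{eqn:tdtransmission} under shifts of $(\phi,\lambda)$ by elements of $Y_h\times X_h$ to replace $\bs\psi$ by $\bs\psi-\bs\Pi_h\bs\psi$, and then re-run the stability argument behind Theorem~\ref{the:3.6} via Propositions~\ref{prop:333}--\ref{prop:55}. Two small corrections: in step~(4) one has simply $\bs u^h-\bs u=(e_-^h,e_+^h)=\bs e^h$ (both components already live on $\RdG$, no characteristic functions), and since Theorem~\ref{the:3.6} places data in the first and last slots of $\bs\chi_D,\bs\chi_N$ rather than the third and second, you cannot cite it verbatim but must, as you parenthetically note and as the paper does, invoke Propositions~\ref{prop:333}--\ref{prop:55} directly.
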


\begin{proof}
First of all, note that the errors $\bs\psi^h-\bs\psi$ can be defined as the solution of the semidiscrete TDBIE \eqref{eq:3.8} and the associated potential errors $\bs e^h:=\bs u^h-\bs u$ are given by a potential representation \eqref{eq:3.9} using $\bs\psi^h-\bs\psi$ as input densities. 
If we define $\boldsymbol\chi_D=(0,0,-\phi,0)$ and $\bs\chi_N=(0,-\kappa\lambda,0,0)$, we can see that $\bs e^h$ is the solution to \eqref{eq:BBB100} with data $\bs\eta=(\bs\chi_D,\bs\chi_N)$ and that $\bs\psi^h-\bs\psi=\bs J\bs e^h$. Using the same arguments as in the proof of Theorem \ref{the:3.6}, we can prove that
\begin{equation}\label{eq:3.AB}
\|\bs e^h(t)\|_{1,\RdG}+\|\bs J\bs e^h(t)\|_{H_\Gamma}\le C \max\{1,t\} \mathrm{Acc}(\bs\psi,t,\theta).
\end{equation}
Note now that we can decompose $\bs\psi^h-\bs\psi=(\bs\psi^h-\bs\Pi_h\bs\psi)-(\bs\psi-\bs\Pi_h\bs \psi)$ and consider $\bs\psi-\bs\Pi_h\bs \psi$ as the exact solution in the argument and $\bs\psi^h-\bs\Pi_h\bs\psi$ as its Galerkin approximation. In other words, if we input $\bs\Pi_h\bs\psi$ as exact solution of the TDBIE, then $\bs\Pi_h\bs\psi$  is also the solution of the semidiscrete TDBIE and the associated error is zero. Therefore, we can rewrite \eqref{eq:3.AB} as
\[
\|\bs e^h(t)\|_{1,\RdG}+\|\bs J\bs e^h(t)\|_{H_\Gamma}\le C \max\{1,t\} \mathrm{Acc}(\bs\psi-\bs\Pi_h\bs\psi,t,\theta),
\]
which proves \eqref{eq:3.AA}.
\end{proof}

\section{Multistep CQ time discretization}
\label{sect:multistep}

In this section we introduce and analyze Convolution Quadrature schemes, based on BDF time-integrators, applied to the semidiscrete TDBIE \eqref{eq:8} and to the potential postprocessing \eqref{eq:9}. All convolution operators --- in the left and right hand sides of \eqref{eq:8} and in the retarded potentials in \eqref{eq:9} --- will be treated with BDF-CQ.

\subsection{The algorithm}

Consider a constant $k>0$ and a sequence of discrete time steps $t_n := nk$ for $n\geq 0$ and let
\begin{equation}\label{eq:4.1}
\delta(\zeta ):=\sum_{j=1}^q {1\over j} (1-\zeta)^j =: \sum_{j=0}^q \alpha_j \zeta^j
\end{equation}
be the characteristic polynomial of the BDF($q$) method. If we write the $\zeta$-transform of a sequence of samples of a causal $X$-valued function $v$,
\[
V(\zeta ) := \sum_{n=0}^\infty v(t_n) \zeta^n,
\]
then
\begin{equation}\label{eq:4.2}
\frac1k \delta(\zeta) V(\zeta) 
=\frac1k \sum_{n=0}^\infty \left(\sum_{j=0}^{\min\{q,n\}}
\alpha_j v(t_{n-j})\right)\zeta^n
\approx \dot V(\zeta) = \sum_{n=0}^\infty \dot v(t_n) \zeta^n.
\end{equation}
Note that the approximation of the derivative is only good when $v$ is a smooth causal function.
In terms of the $\zeta$-transform of data
\[
B_0(\zeta) := \sum_{n=0}^\infty \beta_0(t_n) \zeta^n,\qquad
B_1(\zeta) := \sum_{n=0}^\infty \beta_1(t_n) \zeta^n,
\]
and of the fully discrete unknowns
\begin{alignat*}{3}
&\Lambda^{h,k}(\zeta) := \sum_{n=0}^\infty \lambda_n^{h,k} \zeta^n \approx \sum_{n=0}^\infty \lambda^h(t_n)\zeta^n, &&\quad
\Phi^{h,k}(\zeta) := \sum_{n=0}^\infty \phi_n^{h,k} \zeta^n \approx \phi^h(t_n) \zeta^n,\\
&U_-^{h,k}(\zeta) := \sum_{n=0}^\infty u_{-,n}^{h,k} \zeta^n
\approx \sum_{n=0}^\infty u^h_-(t_n)\zeta^n, && \quad
U_+^{h,k}(\zeta) := \sum_{n=0}^\infty u_{+,n}^{h,k} \zeta^n
\approx \sum_{n=0}^\infty u^h_+(t_n)\zeta^n,
\end{alignat*}
the fully discrete CQ-BEM equations look for $(\Lambda^{h,k}(\zeta),\Phi^{h,k}(\zeta))\in X_h\times Y_h$ such that
\begin{equation} \label{eq:45}
\begin{aligned}
\begin{bmatrix}
	 \VV(\frac{1}{k m} \delta (\zeta))+\kappa \VV(\frac1k \delta (\zeta)) & - \KK(\frac{1}{k m} \delta (\zeta))- \KK(\frac1k \delta (\zeta)) \\[0.05in]
	 \KK^T(\frac{1}{k m} \delta (\zeta))+ \KK^T(\frac1k \delta (\zeta))   &  \WW(\frac{1}{k m} \delta (\zeta))+\frac{1}{\kappa} \WW(\frac1k \delta(\zeta))
\end{bmatrix}
\begin{bmatrix} \Lambda^{h,k}(\zeta) \\[0.05in] \Phi^{h,k}(\zeta) \end{bmatrix} & \\
-
\frac12\begin{bmatrix} B_0(\zeta) \\[0.05in] \frac{1}{\kappa} B_1(\zeta) \end{bmatrix}
- \begin{bmatrix}
	 \VV(\frac1k \delta(\zeta)) & -\KK(\frac1k \delta(\zeta)) \\[0.05in]
	\frac{1}{\kappa} \KK^T(\frac1k \delta(\zeta)) & \frac{1}{\kappa} \WW(\frac1k \delta(\zeta))
\end{bmatrix}
\begin{bmatrix} B_1(\zeta) \\[0.05in] B_0(\zeta) \end{bmatrix} & \in X_h^\circ \times Y_h^\circ ,
\end{aligned}
\end{equation}
and then postprocess their output to build
\begin{subequations}\label{eq:46}
\begin{align}
U_-^{h,k}(\zeta) & = \SS(\tfrac{1}{k m} \delta(\zeta)) \Lambda^{h,k}(\zeta) - \DD(\tfrac{1}{k m} \delta(\zeta))\Phi^{h,k}(\zeta), \\
U_+^{h,k}(\zeta) &=-\SS(\tfrac1k \delta(\zeta))(\kappa \Lambda^{h,k}(\zeta)-B_1(\zeta))+ \DD(\tfrac1k \delta(\zeta)) (\Phi^{h,k}(\zeta)-B_0(\zeta)).
\end{align}
\end{subequations}
This short-hand exposition of the BDF-CQ method can be easily derived by taking the Laplace transform of \eqref{eq:8} and \eqref{eq:9} and substituting the Laplace transformed variable $s$ by the discrete symbol $k^{-1}\delta(\zeta)$. For readers who are not acquainted with CQ techniques, we explain in Appendix \ref{app:A2} the meaning of formulas \eqref{eq:45} and \eqref{eq:46}. 

\subsection{The analysis}

We can think of \eqref{eq:45}-\eqref{eq:46} as a `frequency-domain' system of BIE followed by potential postprocessing associated to a transmission problem with diffusion parameters $\rho \kappa^{-1}\,k^{-1}\delta(\zeta)$ and $k^{-1}\delta(\zeta)$. Therefore, if we write $\bs u_n^{h,k}:=(u_{-,n}^{h,k},u_{+,n}^{h,k})$, $\bs\chi(t):=((\beta_0(t),0,0,0),(0,0,0,\beta_1(t))$, and recall \eqref{eq:4.2}, it follows that
\begin{subequations}
\begin{alignat}{6}
\partial_k \bs u_n^{h,k} &=A_\star \bs u_n^{h,k},\\
\mathcal B\bs u_n^{h,k}
-\bs\chi(t_n) &\in \bs M,
\end{alignat}
\end{subequations}
where
\[
\partial_k \bs u_n^{h,k}=
\frac1k\sum_{j=0}^{\min\{q,n\}}\alpha_j \bs u_{n-j}^{h,k}
\]
is the backward derivative associated to the BDF scheme. On the other hand, the semidiscrete solution $\bs u^h$ satisfies very similar equations at the discrete times
\begin{subequations}
\begin{alignat}{6}
\dot{\bs u}^h(t_n) &=A_\star \bs u^h(t_n) ,\\
\mathcal B\bs u^h(t_n) 
-\bs\chi(t_n) &\in \bs M.
\end{alignat}
\end{subequations}
Therefore, the error $\bs e_n:=\bs u_n^{h,k}-\bs u^h(t_n)$ satisfies the equations
\begin{subequations}\label{eq:4.7}
\begin{alignat}{6}
\bs e_n &\in \bs D_h=D(A),\\
\partial_k \bs e_n &=A\bs e_n+\bs\theta_n,
\end{alignat}
\end{subequations}
where
$
\bs\theta_n:=\dot{\bs u}^h(t_n)-\partial_k \bs u^h(t_n)
$
is the error associated to the finite difference approximation of the time derivative. Note that
\begin{equation}\label{eq:4.888}
\bs J\bs e_n=\bs\psi_n^{h,k}-\bs\psi^h(t_n)
=(\phi_n^{h,k}-\phi^h(t_n),\lambda_n^{h,k}-\lambda^h(t_n)).
\end{equation}

\begin{theorem}
If $\bs u^h$ is smooth enough, then for all $n\ge 0$
\begin{subequations}
\begin{alignat}{6}
\label{eq:4.88a}
\| \bs u_n^{h,k}-\bs u^h(t_n)\|_\RdG & \le
C t_n k^q \max_{t\le t_n}\left\| \tfrac{\mathrm d^q}{\mathrm d t^q}\bs u^{h}(t)\right\|_\RdG,\\
\label{eq:4.88b}
\| \nabla \bs u_n^{h,k}-\nabla \bs u^h(t_n)\|_\RdG 
+\|\bs\psi_n^{h,k}-\bs\psi^h(t_n)\|_{H_\Gamma}
& \le C t_n k^q \max_{t\le t_n}\left\| \tfrac{\mathrm d^{q+1}}{\mathrm d t^{q+1}}\bs u^{h}(t)\right\|_\RdG.
\end{alignat}
\end{subequations}
\end{theorem}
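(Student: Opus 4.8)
The plan is to reduce the fully discrete error estimate to the classical error analysis of BDF methods applied to the abstract Cauchy problem \eqref{eq:4.7}, exactly as laid out in the paragraphs preceding the statement. The starting point is that $\bs e_n = \bs u_n^{h,k}-\bs u^h(t_n)$ solves $\partial_k\bs e_n = A\bs e_n + \bs\theta_n$ with $\bs\theta_n = \dot{\bs u}^h(t_n)-\partial_k\bs u^h(t_n)$ the consistency defect of the BDF($q$) difference quotient, and that $A$ generates a contractive analytic semigroup on $\bs H$ (Proposition \ref{prop:4}(c)), is maximal dissipative and self-adjoint (Proposition \ref{prop:4}(b)). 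These are precisely the hypotheses under which the standard parabolic BDF theory (e.g., \cite{Thomee2006}, or a functional-calculus/$A(\vartheta)$-stability argument adapted to BDF($q$) for $q\le 6$) gives stability of the time-stepping scheme and an $\bigO(k^q)$ local truncation error, provided $\bs u^h$ has $q$ (resp.\ $q+1$) time derivatives with values in the appropriate space. So the first step is to state cleanly the abstract estimate one needs: for the homogeneous-data BDF recursion driven by the defect $\bs\theta_n$, one has $\|\bs e_n\|_{\bs H}\le C\,t_n\max_{m\le n}\|\bs\theta_m\|_{\bs H}$ and, by an energy/resolvent estimate using $[\cdot,\cdot]$, also control of $|\bs e_n|_V$ and $\|A\bs e_n\|_{\bs H}$ in terms of (discrete differences of) the $\bs\theta_m$.

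Second, I would estimate the consistency defect. By the standard Peano-kernel / Taylor-with-integral-remainder argument for the BDF($q$) quotient, $\|\bs\theta_n\|_{\bs H}\le C k^q \max_{t\le t_n}\|\tfrac{\mathrm d^{q+1}}{\mathrm d t^{q+1}}\bs u^h(t)\|_{\bs H}$; but to get the sharper bound in \eqref{eq:4.88a}, which loses only one order in the number of derivatives (i.e.\ involves $\tfrac{\mathrm d^q}{\mathrm d t^q}$, not $\tfrac{\mathrm d^{q+1}}{\mathrm d t^{q+1}}$), one uses the usual trick of writing $\bs\theta_n$ as a backward difference of lower-order remainders and summing by parts against the (bounded) discrete solution operator of the BDF scheme — this is the mechanism that converts $k^{q}\|\partial_t^{q+1}\bs u^h\|$ controlled by $t_n$ into $k^q\|\partial_t^{q}\bs u^h\|$ controlled by $t_n$, and is exactly the argument used for BDF in \cite{Thomee2006}. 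For the $L^2(\RdG)$ estimate \eqref{eq:4.88a} we only need $\|\bs e_n\|_{\bs H}$, while for \eqref{eq:4.88b} we need the stronger norm: here I would invoke \eqref{eq:NEW24}, $|\bs w|_V\lesssim \|A\bs w\|_{\bs H}+\|\bs w\|_{\bs H}$, so it suffices to bound $\|A\bs e_n\|_{\bs H}$, which one obtains from the same BDF stability theory applied with one extra time derivative (hence the $q+1$ in \eqref{eq:4.88b}). Finally, $\|\bs\psi_n^{h,k}-\bs\psi^h(t_n)\|_{H_\Gamma}=\|\bs J\bs e_n\|_{H_\Gamma}$ by \eqref{eq:4.888}, and $\bs J:\bs D\to H_\Gamma$ is bounded, with $\|\bs e_n\|_{\bs D}\approx \|\bs e_n\|_{1,\RdG}+\|\Delta\bs e_n\|_\RdG\lesssim \|\bs e_n\|_{\bs H}+\|A\bs e_n\|_{\bs H}$; so the boundary-density error is controlled by the same right-hand side as the gradient error, giving \eqref{eq:4.88b}.

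The main obstacle, I expect, is the \emph{sharpness} of the time-dependence and of the derivative count simultaneously: naively combining stability ($\lesssim t_n\max\|\bs\theta_m\|$) with the crude consistency bound yields $t_n k^q\|\partial_t^{q+1}\bs u^h\|$ for \eqref{eq:4.88a} as well, which is one derivative worse than claimed. Getting \eqref{eq:4.88a} with only $\partial_t^q\bs u^h$ requires the more careful representation of $\bs\theta_n$ and a summation-by-parts against a \emph{uniformly bounded} (in $n$ and $k$) discrete evolution family for the BDF scheme on a Hilbert space — i.e.\ one must know not just $A(\vartheta)$-stability of BDF($q$) but the discrete analogue of the bound $\|\sum_j S_{n-j}\|\lesssim t_n$ for the discrete solution operators $S_j$, which for the self-adjoint dissipative $A$ follows from spectral calculus. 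A secondary, more bookkeeping, obstacle is the phrase ``if $\bs u^h$ is smooth enough'': one should record that under $\bs\beta\in\mathcal B^{1+\theta}$ (or a higher-order analogue) Proposition \ref{prop:333} / Proposition \ref{prop:55} give the needed temporal regularity of $\bs u^h$ with values in $\bs D$, so that the right-hand sides $\max_{t\le t_n}\|\partial_t^{q}\bs u^h(t)\|_\RdG$ and $\max_{t\le t_n}\|\partial_t^{q+1}\bs u^h(t)\|_\RdG$ are finite; strictly this requires iterating the smoothing estimates of Section \ref{sect:galerkin}, which I would either cite or relegate to a remark rather than prove in detail here.
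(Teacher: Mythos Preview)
Your plan is correct and tracks the paper's proof closely. The paper carries out exactly the reduction you describe: the error recurrence \eqref{eq:4.7}, an explicit discrete variation-of-constants representation $\bs e_n=k\sum_{j=0}^{n-1}P_j(-kA)(\alpha_0I-kA)^{-j-1}\bs\theta_{n-j}$ taken from \cite[Lemma 10.3]{Thomee2006}, and then the functional-calculus bound of Theorem~\ref{the:A2} (using that $-A$ is self-adjoint and non-negative, so the rational functions $P_j(z)/(\alpha_0+z)^{j+1}$ can be bounded by their sup on $(0,\infty)$) to obtain $\|\bs e_n\|_{\bs H}\le Ct_n\max_j\|\bs\theta_j\|_{\bs H}$. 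For \eqref{eq:4.88b} the paper reruns the same argument on $\bs f_n:=\partial_k\bs e_n$, which satisfies the same recurrence with forcing $\partial_k\bs\theta_n$, writes $A_\star\bs e_n=\bs f_n-\bs\theta_n$, and finishes via \eqref{eq:NEW24} and the boundedness of $\bs J$ --- precisely your outline for the gradient and boundary-density bounds.

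The one place where your proposal is more elaborate than the paper is the derivative count in \eqref{eq:4.88a}. You correctly note that the bare Peano/Taylor bound on $\bs\theta_n$ yields $Ck^q\max\|\partial_t^{q+1}\bs u^h\|$, and you propose a summation-by-parts reduction to recover the stated $\partial_t^q$ right-hand side. The paper does \emph{not} do this: it simply asserts $\|\bs\theta_j\|\le Ck^q\max\|\partial_t^q\bs u^h\|$ directly from Taylor expansion and moves on. So you are not missing an idea here; your version of this step is, if anything, more careful than the paper's. Your remark about the phrase ``smooth enough'' is likewise handled in the paper only at the level of a hypothesis, not by iterating the estimates of Section~\ref{sect:galerkin}.
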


\begin{proof}
Following \cite[Lemma 10.3]{Thomee2006}, we can show that the solution of the recurrence \eqref{eq:4.7} is
\begin{equation}\label{eq:4.9}
\bs e_n=k \sum_{j=0}^{n-1} P_j(-k\,A) (\alpha_0I-kA)^{-j-1} \bs\theta_{n-j}
\end{equation}
(here $\alpha_0=\delta(0)$ is the leading coefficient of the BDF derivative), where  $\{ P_j\}$ is a sequence of polynomials  with $\mathrm{deg}\,P_j\le j$ and
\begin{equation}\label{eq:4.8}
\sup_{z>0}\left| \frac{P_j(z)}{(\alpha_0+z)^{j+1}}\right| \le C \qquad \forall j.
\end{equation}
The proof of \eqref{eq:4.9} is purely algebraic, based only on the fact that $ \alpha_0I-kA $ can be inverted.
The rational functions in \eqref{eq:4.8} are bounded at infinity and have all their poles at $-\alpha_0<0$, which allows us to apply Theorem \ref{the:A2} and show that
\[
\|\bs e_n\|_\RdG \le C k \sum_{j=1}^{n} \|\bs\theta_{j}\|_\RdG 
\le C t_n \max_{1\le j\le n} \|\bs\theta_j\|_\RdG.
\]
However, if $\bs u^h$ is smooth enough (as a function from $\mathbb R$ to $\bs H=L^2(\RdG)^2)$,  Taylor expansion yields
\[
\|\bs\theta_j\|_\RdG\le C k^q \max_{t_{j-q}\le t\le t_j}\left\| \tfrac{\mathrm d^q}{\mathrm d t^q}\bs u^{h}(t)\right\|_\RdG,
\]
which proves \eqref{eq:4.88a}.

Note now that 
\[
\partial_k \bs\theta_n=\partial_k \dot{\bs u}^h(t_n)-\ddot{\bs u}^h(t_n)
-(\partial_k^2\bs u^h(t_n)-\ddot{\bs u}^h(t_n))
\]
and that $\bs f_n:=\partial_k\bs e_n\in \bs D_h$ satisfies the recurrence
$
\partial_k \bs f_n =A\bs f_n+\partial_k\bs\theta_n.
$
Using a simple argument on Taylor expansions and Theorem \ref{the:A2}, it follows that
\[
\|\bs e_n\|_\RdG \le C t_n \max_{1\le j\le n} \|\partial_k\bs\theta_j\|_\RdG
\le C t_n \max_{t\le t_n}\left\| \tfrac{\mathrm d^{q+1}}{\mathrm d t^{q+1}}\bs u^{h}(t)\right\|_\RdG.
\]
This can be used to give a bound for $A_\star\bs e_n=\partial_k\bs e_n-\bs\theta_n$ and \eqref{eq:NEW24} then proves that
\[
\|\bs e_n\|_{1,\RdG}+\|\Delta\bs e_n\|_\RdG
\le C t_n \max_{t\le t_n}\left\| \tfrac{\mathrm d^{q+1}}{\mathrm d t^{q+1}}\bs u^{h}(t)\right\|_\RdG.
\]
Using this bound, \eqref{eq:4.888}, and the boundedness of $\bs J$, \eqref{eq:4.88b} follows.
\end{proof}

\section{Multistage CQ time discretization} \label{sec:6}

In this section we introduce and analyze some Runge-Kutta based Convolution Quadrature (RKCQ) schemes for the full discretization of the semidiscrete system of TDBIE \eqref{eq:8} and the potential postprocessing \eqref{eq:9}. Some background material and references on RKCQ and the needed Dunford calculus can be found in Appendices \ref{app:B} and \ref{app:A4}.

\subsection{The algorithm and some observations}

We consider an implicit $s$-stage RK method with Butcher tableau
\[
\begin{array}{c|c}
\bff c & \mathcal{Q}\\ \hline
& \\[-2ex]
 & \bff b^T
\end{array} \qquad
\begin{array}{c|cccc}
c_1 & a_{11} & a_{12} & \cdots & a_{1s} \\
c_2 & a_{21} & a_{22} & \cdots & a_{2s} \\
\vdots & \vdots & \vdots & \ddots & \vdots \\
c_s & a_{s1} & a_{s2} & \cdots & a_{ss} \\ \hline
 & b_1 & b_2 & \cdots & b_s
\end{array}
\]
and stability function
\[
r(z) := 1+z\bff b^T(\mathcal I-z\mathcal Q)^{-1}\bff 1,
\]
where $\bff 1={(1,\ldots ,1)}^{T} \in \mathbb R^s$ and $\mathcal I$ is the $s\times s$ identity matrix.  We will assume the following hypotheses on the RK method:
\begin{itemize}
\item[(a)] The method has (classical) order $p$ and stage order $q\leq p-1$. We exclude methods where $q=p$ for simplicity. (For instance, the one-stage backward Euler formula, which was covered as the BDF$(1)$ method in the previous section, is not included in this exposition.)
\item[(b)] The method is A-stable, i.e., the matrix $\mathcal I-z\mathcal Q$ is invertible for $\operatorname{Re} z\leq 0$
  and the stability function satisfies $|r(z)|\leq 1$  for those values of $z$.
\item[(c)] The method is stiffly accurate, i.e., 
$\bff b^T \mathcal{Q}^{-1}  = (0,0,\ldots ,0,1)=:\mathbf e_s^T.$
This implies $\lim_{|z|\to\infty} |r(z)|=0$.  

Assuming the usual simplifying hypothesis for RK schemes $\mathcal Q\mathbf 1=\mathbf c$, stiff accuracy implies that $c_s=1$, that is, the last stage of the method is the step. Stiff accuracy also implies that (cf. \cite[Lemma 2]{BaLuMe2011})
\begin{alignat*}{6}
r(z)
= & 
\mathbf b^T\mathcal Q^{-1}\mathbf 1 + z\mathbf b^T (\mathcal I-z\mathcal Q)^{-1}\mathbf 1 \\
=& \mathbf b^T \mathcal Q^{-1} (\mathcal I-z\mathcal Q+ z\mathcal Q)
(\mathcal I-z\mathcal Q)^{-1}\mathbf 1 \\
=& \bff b^T\mathcal Q^{-1} (\mathcal I - z\mathcal Q)^{-1} \bff 1.
\end{alignat*}

\item[(d)] The matrix $\mathcal Q$ is invertible. This hypothesis and A-stability imply that the spectrum of $\mathcal Q$ is contained in $\mathbb C_+$. 
\end{itemize}
Examples of the Runge-Kutta methods satisfying all the hypotheses above are provided by the family of $s$-stage Radau IIA methods with order $p=2s-1$ and stage order $q=s$. 

Given a function of the time variable, we will write
\[
w(t_n+\bff ck) := \begin{bmatrix} w(t_n+c_1k) \\ w(t_n+c_2k) \\ \vdots \\ w(t_n+c_s k)\end{bmatrix}
\]
to denote the $s$-vectors with the samples at the stages in the time interval $[t_n,t_{n+1}]$. We sample the boundary data and collect the vectors of time samples in formal $\zeta$-series
\[
B_0(\zeta) := \sum_{n=0}^\infty \beta_0(t_n+\bff ck) \zeta^n,\qquad
B_1(\zeta) := \sum_{n=0}^\infty \beta_1(t_n+\bff ck) \zeta^n.
\]
The unknowns for the fully discrete method can be collected in 
\begin{alignat*}{3}
&\Lambda^{h,k}(\zeta) := \sum_{n=0}^\infty \lambda^{h,k}_n \zeta^n 
\approx \sum_{n=0}^\infty \lambda^h(t_n+\bff ck)\zeta^n, &&\quad
\Phi^{h,k} (\zeta):= \sum_{n=0}^\infty \phi^{h,k}_n \zeta^n \approx \sum_{n=0}^\infty\phi^h(t_n+\bff ck) \zeta^n,\\
&U_-^{h,k} (\zeta):= \sum_{n=0}^\infty U_{-,n}^{h,k} \zeta^n
\approx \sum_{n=0}^\infty u^h_-(t_n+\bff ck)\zeta^n, && \quad
U_+^{h,k} (\zeta):= \sum_{n=0}^\infty U_{+,n}^{h,k} \zeta^n
\approx \sum_{n=0}^\infty u^h_+(t_n+\bff ck)\zeta^n,
\end{alignat*}
The pairs $(\lambda^{h,k}_n,\phi^{h,k}_n)\in X_h^s\times Y_h^s$ are computed using \eqref{eq:45} where the symbol $\delta$ is now the matrix-valued RK differentiation operator
\begin{equation}\label{eq:deltazeta}
\delta (\zeta) = 
\left(\mathcal Q+\frac\zeta{1-\zeta}\mathbf 1\mathbf b^T\right)^{-1}=\mathcal{Q}^{-1} - \zeta \mathcal{Q}^{-1} \bff 1\bff{b}^T \mathcal{Q}^{-1},
\end{equation}
(these two matrices can be easily seen to be equal using the stiff accuracy hypothesis) and the testing condition has to be modified, imposing that the residual is in $(X_h^s\times Y_h^s)^\circ\equiv (X_h^\circ)^s\times (Y_h^\circ)^s$. The discrete potentials at the different stages can be computed using \eqref{eq:46}, with the new definition of $\delta(\zeta)$. In all the expressions for the RK-CQ fully discrete equations, analytic functions are evaluated at $k^{-1}\delta(\zeta)$ via Dunford calculus (see Appendix \ref{app:B}). This is meaningful since the spectrum of $\delta(\zeta)$ lies in $\mathbb C_+$ for $\zeta$ small enough, which is due to the fact that $\delta(\zeta)$ is a small (rank-one) perturbation of $\mathcal Q^{-1}$ and the spectrum of $\mathcal Q$ is in $\mathbb C_+$ (see hypotheses (b) and (d) above). 

Before we embark ourselves in the error analysis of the fully discrete method, which involves using quite non-trivial results from \cite{AlPa2003}, we are going to make some important remarks that will be pertinent to the analysis.
\begin{itemize}
\item[(1)] Using classical results on interpolation spaces on (bounded and unbounded) Lipschitz domains and the identification of Sobolev spaces with or without Dirichlet condition for low order indices (see \cite[Theorem 3.33, Theorem B.9, Theorem 3.40]{McLean2000}) it follows that for all $\mu<1/2$ 
\begin{alignat*}{6}
H^1(\mathbb R^d\setminus\Gamma)\equiv H^1(\Omega_-)\times H^1(\Omega_+)
 &\subset H^\mu(\Omega_-)\times H^\mu(\Omega_+) 
  =H^\mu_0(\Omega_-)\times H^\mu_0(\Omega_+) \\
 & = [L^2(\Omega_-),H^2_0(\Omega_-)]_{\mu/2} \times
 	 [L^2(\Omega_+),H^2_0(\Omega_+)]_{\mu/2}\\
& \equiv [L^2(\mathbb R^d),H^2_0(\mathbb R^d\setminus\Gamma)]_{\mu/2},
\end{alignat*}
and therefore
\[
\bs V=H^1(\mathbb R^d\setminus\Gamma)^2
\subset [\bs H,H^2_0(\mathbb R^d\setminus\Gamma)^2]_\nu
\subset [\bs H,D(A)]_\nu=D((I-A)^\nu) \qquad \forall \nu <1/4.
\]
\item[(2)] Because of the hypotheses on the RK method, the rational function $R(z):=r(-z)$ satisfies the conditions of Theorem \ref{the:A2} (it is bounded at infinity and has all its poles in the negative real part complex half-plane). We also know that the operator $-A$ is self-adjoint and non-negative (Proposition \ref{prop:4}(c)). Therefore,
\begin{equation}\label{eq:920a}
\| r(kA)\|_{\bs H\to\bs H}=\| R(-kA)\|_{\bs H\to \bs H}
\le \sup_{z>0}|R(z)|=\sup_{z<0}|r(z)|= 1,
\end{equation}
where we have used $A$-stability of the RK scheme.
Consequently
\begin{equation}\label{eq:920b}
\sup_n \| r(kA)^n\|_{\bs H \to \bs H}\le 1 \qquad \forall n,\quad \forall k>0.
\end{equation}
\item[(3)] The entries of the matrix-valued rational function $z\mapsto (\mathcal I+z\mathcal Q)^{-1}$ are rational functions with poles in $\{ z\,:\,\mathrm{Re}\,z<0\}$ and converging to zero as $|z|\to \infty$. Using  Theorem \ref{the:A2} in a similar way to how we proved \eqref{eq:920a} above, it follows that
\begin{equation}\label{eq:920c}
\| (\mathcal I\otimes I-k\mathcal Q \otimes A)^{-1}\|_{\bs H^s \to \bs H^s}\le C \qquad \forall k>0.
\end{equation}
Here we have used traditional tensor product notation for Kronecker products of $s\times s$ matrices by operators. 
\end{itemize}

\subsection{Error estimates}

The following proposition basically says that using RKCQ in the semidiscrete system of integral equations and then in the potential representation is equivalent to applying RK to the transmission problem associated to the heat equation satisfied by the potential fields. 

\begin{proposition}\label{prop:55.1}
If $\bs U^{h,k}_n:=(U^{h,k}_{-,n},U^{h,k}_{+,n})\in \bs D^s$ is the vector of the internal stages of the RKCQ method in $[t_n,t_{n+1}]$, then
\begin{subequations}\label{eq:555.2}
\begin{alignat}{6}
\bs U^{h,k}_n-k(\mathcal Q\otimes A_\star) \bs U^{h,k}_n 
	&=\mathbf 1\mathbf e_s^T \bs U^{h,k}_{n-1},\\
\mathcal B \otimes \bs U^{h,k}_n -\bs\chi(t_n+\mathbf c\,k) &\in \bs M^s.
\end{alignat}
\end{subequations}
\end{proposition}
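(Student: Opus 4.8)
The plan is to read \eqref{eq:555.2} off the generating-function (operational) form of the RKCQ system by comparing coefficients of powers of $\zeta$. The starting point is the observation, already exploited in the proof of well-posedness of \eqref{eq:BBB100}, that for a scalar frequency $s\in\mathbb C_+$ the potential postprocessing of the solution of the $s$-domain system of BIE is precisely the solution $\bs W(s)$ of the $s$-domain transmission problem \eqref{eq:3131} (this being, in the Laplace domain, Green's representation, Theorem \ref{th:2}). By its very construction, the fully discrete system \eqref{eq:45}--\eqref{eq:46} is obtained from \eqref{eq:8}--\eqref{eq:9} by the substitution $s\mapsto k^{-1}\delta(\zeta)$, where $\delta(\zeta)$ is now the matrix-valued RK symbol \eqref{eq:deltazeta}, acting on the stage index and commuting with every spatial operator. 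Since $\delta(0)=\mathcal Q^{-1}$ has spectrum in $\mathbb C_+$ (hypotheses (b), (d)), the transfer functions $\SS,\DD,\VV,\KK,\KK^T,\WW$ --- analytic in $\mathbb C_\star$ with the bounds of Theorem \ref{th:bounds} --- are evaluated at $k^{-1}\delta(\zeta)$ through the Dunford calculus of Appendix \ref{app:B} for $\zeta$ near the origin, giving convergent operator-valued power series; and every algebraic identity among the potentials, the boundary operators, $A_\star$ and $\mathcal B$ that held for scalar $s$ lifts verbatim, since it is an analytic identity in $s$. Repeating the well-posedness argument following \eqref{eq:BBB100} with $s$ replaced by $k^{-1}\delta(\zeta)$ then shows $\bs U^{h,k}_n\in\bs D^s$ and that the generating function $\bs U^{h,k}(\zeta)=\sum_{n\ge0}\bs U^{h,k}_n\zeta^n$ satisfies, as an identity of operator-valued power series near $\zeta=0$,
\begin{equation*}
(k^{-1}\delta(\zeta)\otimes I)\,\bs U^{h,k}(\zeta)=(\mathcal I\otimes A_\star)\,\bs U^{h,k}(\zeta),
\qquad
(\mathcal I\otimes\mathcal B)\,\bs U^{h,k}(\zeta)-\widehat{\bs\chi}(\zeta)\in\bs M^s ,
\end{equation*}
where $\widehat{\bs\chi}(\zeta):=\sum_{n\ge0}\bs\chi(t_n+\mathbf c\,k)\,\zeta^n$ is assembled from $B_0(\zeta),B_1(\zeta)$ in the same pattern as $\bs\chi(t)$.

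Next comes an elementary matrix identity. Inserting the stiff-accuracy relation $\mathbf b^T\mathcal Q^{-1}=\mathbf e_s^T$ into the second form of \eqref{eq:deltazeta} gives $\delta(\zeta)=\mathcal Q^{-1}(\mathcal I-\zeta\,\mathbf 1\mathbf e_s^T)$, whence
\begin{equation*}
k\,\mathcal Q\cdot k^{-1}\delta(\zeta)=\mathcal I-\zeta\,\mathbf 1\mathbf e_s^T .
\end{equation*}
Multiplying the interior equation of the displayed system by $k\,\mathcal Q\otimes I$ and using this identity yields
\begin{equation*}
\bs U^{h,k}(\zeta)-(k\,\mathcal Q\otimes A_\star)\,\bs U^{h,k}(\zeta)=\zeta\,(\mathbf 1\mathbf e_s^T\otimes I)\,\bs U^{h,k}(\zeta) .
\end{equation*}
Comparing coefficients of $\zeta^n$ --- noting that these series have no negative-index terms, so the $\zeta^n$-coefficient on the right is $\mathbf 1\mathbf e_s^T\bs U^{h,k}_{n-1}$ with the convention $\bs U^{h,k}_{-1}:=0$ (which also makes the $n=0$ case the homogeneous start) --- gives the first line of \eqref{eq:555.2}; comparing coefficients of $\zeta^n$ in the transmission condition (a closed-subspace constraint, hence equivalent to one on each coefficient) gives the second line.

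The genuinely delicate step is the first one: making the substitution $s\mapsto k^{-1}\delta(\zeta)$ rigorous when the frequency is a matrix. One must check that the holomorphic functional calculus of $\delta(\zeta)$ commutes with the Kronecker structure separating the stage space from $A_\star$ and $\mathcal B$, that $k^{-1}\delta(\zeta)$ stays in the region $\mathbb C_+$ where the resolvent and transfer-function bounds of Theorem \ref{th:bounds} hold (so the Dunford integrals defining $\SS(k^{-1}\delta(\zeta)),\dots$ converge and the resulting $\zeta$-series are well defined near the origin), and that the mapping properties yielding $\bs U^{h,k}_n\in\bs D^s$ survive the lift. This is precisely what the RKCQ and Dunford-calculus machinery of Appendices \ref{app:B} and \ref{app:A4}, together with the spectral information on $\delta(\zeta)$ recorded in Section \ref{sec:6}, is designed to deliver; once it is in hand, the remaining two steps are the purely formal computations above.
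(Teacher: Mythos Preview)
Your proof is correct and follows essentially the same route as the paper's: establish the generating-function identities $k^{-1}\delta(\zeta)\,\bs U^{h,k}(\zeta)=(\mathcal I\otimes A_\star)\bs U^{h,k}(\zeta)$ and $\mathcal B\otimes\bs U^{h,k}(\zeta)-\Xi(\zeta)\in\bs M^s$, use $\mathcal Q\,\delta(\zeta)=\mathcal I-\zeta\,\mathbf 1\mathbf e_s^T$ (from stiff accuracy) to rewrite the first, and then compare coefficients of $\zeta^n$. The only difference is cosmetic: the paper outsources the justification of the matrix-frequency substitution $s\mapsto k^{-1}\delta(\zeta)$ to the appendix of \cite{MeRi2017}, whereas you sketch it inline via the Dunford calculus and the spectral location of $\delta(\zeta)$ near $\zeta=0$.
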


\begin{proof}
If we collect data and potential fields in 
\begin{alignat*}{6}
U^{h,k}(\zeta)& :=(U^{h,k}_-(\zeta),U^{h,k}_+(\zeta))\in \bs D^s\\
\Xi(\zeta)&:=((B_0(\zeta),0,0,0),(0,0,0,B_1(\zeta))\in \bs H_\Gamma^s,
\end{alignat*}
it then follows that
\begin{subequations}
\begin{alignat}{6}
\label{eq:555.3a}
 k^{-1}\delta(\zeta)   U^{h,k}(\zeta) &=(\mathcal I \otimes A_\star) U^{h,k}(\zeta),\\
 \label{eq:555.3b}
\mathcal B\otimes U^{h,k}(\zeta)-\Xi(\zeta) &\in \bs M^s.
\end{alignat}
\end{subequations}
(See the Appendix of \cite{MeRi2017} for a detailed argument showing why this holds in the context of wave equations. Those ideas can be used almost verbatim for the heat equation.) Equation \eqref{eq:555.3a} is equivalent to
\begin{equation}\label{eq:555.4}
U^{h,k}(\zeta)-\zeta \mathbf 1\mathbf e_s^T 
U^{h,k}(\zeta)
=k(\mathcal Q\otimes A_\star) U^{h,k}(\zeta). 
\end{equation}
Looking at the time instances of the discrete-in-time equations enconded in the $\zeta$-transformed equations \eqref{eq:555.4} and \eqref{eq:555.3b}, the result follows. 
\end{proof}

\begin{proposition}[$L^2$ error estimate for the steps]\label{prop:5.2}
Let $\bs u^{h,k}_n=\bs e_s^T\bs U^{h,k}_n$ be the $n$-th step approximation provided by the RKCQ method. We have the bounds: if $p=q+1$, then
\[
\|\bs u^{h,k}_n-\bs u^h(t_n)\|_\Rd
\le C  k^{q+1}
c(\bs u^h,t_n),
\]
whereas if $q\leq p+2$ and $\varepsilon\in (0,1/4]$,
\[
\|\bs u^{h,k}_n-\bs u^h(t_n)\|_\Rd
\le C_\varepsilon  k^{q+1+\frac14-\varepsilon}
c(\bs u^h,t_n).
\]
Here,
\begin{equation}
c(\bs u^h,t_n):=(1+ t_n)\sum_{\ell=q+1}^{p+1}\max_{  t\le t_n} 
\left\| \tfrac{\mathrm d^\ell}{\mathrm d t^\ell}\bs u^h(t)\right\|_{1,\RdG}.
\end{equation}
\end{proposition}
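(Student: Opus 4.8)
The plan is to reduce the $L^2$ error estimate for the RK steps to a known abstract result on Runge--Kutta time discretization of the abstract Cauchy problem $\dot{\bs w} = A\bs w + \bs g$ with the dissipative, self-adjoint generator $A$ from Proposition~\ref{prop:4}. The key observation, already prepared by Proposition~\ref{prop:55.1}, is that the RKCQ scheme applied to \eqref{eq:45}--\eqref{eq:46} produces exactly the internal stages of the RK method applied to \eqref{eq:AA18}, and the semidiscrete solution $\bs u^h$ satisfies \eqref{eq:AA18} exactly. Subtracting the two, the error $\bs e_n := \bs u^{h,k}_n - \bs u^h(t_n)$ (and its stage vector $\bs E_n$) satisfies a homogeneous RK recursion driven only by the consistency defect $\bs\delta_n$ coming from inserting the exact semidiscrete solution into the RK stage equations. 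Because the transmission conditions \eqref{eq:AA18b}--\eqref{eq:AA18c} are satisfied exactly by both $\bs u^{h,k}_n$ and $\bs u^h(t_n+\bs c k)$ (they are the same time-independent affine constraints), the defect $\bs\delta_n$ lies in $D(A)$ and the error equation is genuinely an abstract ODE recursion in the Hilbert space $\bs H$, with $A$ self-adjoint and non-positive.

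First I would write the defect via Taylor expansion: since the method has stage order $q$, the stage defect is $O(k^{q+1})$ measured in terms of $\tfrac{\mathrm d^{q+1}}{\mathrm d t^{q+1}}\bs u^h$, while the step defect (the $\bs b^T$-combination) is $O(k^{p+1})$. Next I would invoke the error representation for A-stable, stiffly accurate RK methods in the style of \cite{AlPa2003}: the step error admits a discrete variation-of-constants formula
\[
\bs e_n = \sum_{j=1}^{n} k\, \bs b^T \big(\mathcal I - k\mathcal Q\otimes A\big)^{-1}\, r(kA)^{\,n-j}\, \tfrac1k\big(\text{stage defect at step }j\big),
\]
where $r$ is the stability function. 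The bounds \eqref{eq:920a}--\eqref{eq:920c}, namely $\|r(kA)^n\|\le 1$ and $\|(\mathcal I\otimes I - k\mathcal Q\otimes A)^{-1}\|\le C$ uniformly in $k$ and $n$, are precisely what is needed to control the propagation. Summing the $n$ terms of size $O(k^{q+1})$ (after one factor of $k$ is absorbed) against uniformly bounded propagators gives the factor $t_n = nk$, yielding the clean $k^{q+1}$ bound when $p=q+1$, which covers, e.g., the implicit midpoint / two-stage Gauss case relevant here only if it satisfies the hypotheses — but in our setting this is the $p=q+1$ branch. For the general case $q\le p-1$ (with $p\le q+2$, so the stated range), the naive argument only gives $k^{q+1}$ and we must exploit the extra smoothing: using remark~(1), $\bs V = H^1(\mathbb R^d\setminus\Gamma)^2 \subset D((I-A)^\nu)$ for every $\nu<1/4$, so $\bs u^h(t)$ and its time derivatives live in a slightly-better-than-$\bs H$ fractional domain of $A$. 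Feeding one factor $(I-A)^{-\nu}$ from the defect (which is $\nabla$-controlled, hence $H^1$, hence in $D((I-A)^\nu)$ after pairing with the derivative bound of $\bs u^h$ in $\|\cdot\|_{1,\RdG}$) into the Alonso--Palencia-type bounds for $\|(kA)^{\nu} r(kA)^n\|$ gains an extra $k^{\nu}$ with $\nu = \tfrac14-\varepsilon$, producing the fractional order $k^{q+1+\frac14-\varepsilon}$. This is why the defining seminorm $c(\bs u^h,t_n)$ uses the $H^1(\RdG)$ norm of the time derivatives rather than the $L^2$ norm.

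The main obstacle I expect is the fractional-order gain: the inequalities \eqref{eq:920a}--\eqref{eq:920c} only give uniform boundedness of the propagator $r(kA)^n$ and the resolvent, not the smoothing estimate $\|(kA)^\nu r(kA)^n\|_{\bs H\to\bs H}\le C n^{-\nu}$ (or the summable version $\sum_n \|(kA)^\nu r(kA)^n\|\le C\,t_n^{1-\nu}/k^{1-\nu}\cdot k^\nu$) that is genuinely needed to trade spatial regularity for a power of $k$. This is exactly the hard analytic input one borrows from \cite{AlPa2003}, which establishes such maximal-regularity-type discrete smoothing for A-acceptable RK methods applied to generators of bounded analytic semigroups; verifying that $A$ (self-adjoint, non-positive, hence generator of a bounded analytic semigroup by Proposition~\ref{prop:4}(c)) satisfies their hypotheses, and bookkeeping the interplay between the stage-defect order $q+1$, the step-defect order $p+1$, and the fractional exponent $\nu<1/4$ coming from remark~(1), is where the real work lies. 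The dependence on $t_n$ (the factor $1+t_n$) comes from the straightforward summation over $n$ steps, but one must be careful that the Alonso--Palencia constants are uniform in $k$ and in the number of steps — which is guaranteed by A-stability and stiff accuracy, hypotheses (b) and (c).
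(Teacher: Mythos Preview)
Your strategy is correct and is essentially the same as the paper's: both routes identify the RKCQ scheme as the RK method applied to the semidiscrete transmission problem (Proposition~\ref{prop:55.1}) and then invoke the abstract RK error theory of \cite{AlPa2003}, with the fractional gain $\tfrac14-\varepsilon$ coming from the interpolation embedding $\bs V\subset D((I-A)^{1/4-\varepsilon})$ of remark~(1). The paper's proof is more compressed than yours: rather than writing an error recursion and a discrete variation-of-constants formula, it introduces the orthogonal projection $\mathcal P:\bs H_\Gamma\to\bs M^\bot$, rewrites \eqref{eq:AA18} as $\dot{\bs u}^h=A_\star\bs u^h$, $\mathcal P\mathcal B\bs u^h=\mathcal P\bs\chi$, observes that RK applied to this system reproduces \eqref{eq:555.2}, and then cites \cite[Theorem~2]{AlPa2003} (together with its Section~4 treatment of nonhomogeneous boundary conditions) as a black box, supplying only a translation table. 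Your sketch unpacks what that theorem does internally; the mathematical content is the same.

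One point to correct: you assert that the consistency defect $\bs\delta_n$ lies in $D(A)$. It does not in general. What lies in $D(A)^s$ is the stage error $\bs E_n=\bs U^{h,k}_n-\bs u^h(t_n+\mathbf c\,k)$, because both terms satisfy the \emph{same} nonhomogeneous transmission conditions. The defect itself is a Taylor-type combination of $\bs u^h$ and $\dot{\bs u}^h$ at nearby times, and the corresponding combination of the boundary data $\bs\chi$ does not vanish, so the defect only lies in $\bs V^s\subset D((I-A)^{1/4-\varepsilon})^s$. Fortunately you do not use the stronger claim anywhere: your fractional-gain step correctly trades exactly this $\bs V$-regularity (hence the $H^1(\RdG)$ norm in $c(\bs u^h,t_n)$) for the extra $k^{1/4-\varepsilon}$. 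Had the defect genuinely been in $D(A)$ you would gain a full power of $k$, which is more than what is claimed or true.
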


\begin{proof}
If $\mathcal P: \bs H_\Gamma \to \bs M^\bot$ is the orthogonal projection onto $\bs M^\bot$ (the orthogonality is with respect to the $\bs H_\Gamma$ inner product), then the semidiscrete equations (see \eqref{eq:AA18}) are equivalent to
\begin{subequations}\label{eq:555.5}
\begin{alignat}{6}
\dot{\bs u}^h(t) &=A_\star \bs u^h(t),\\
\mathcal P\mathcal B \bs u^h(t) & = \mathcal P \bs\chi(t),\\
\bs u^h(0) &=0.
\end{alignat}
\end{subequations}
(Note that the last condition is equivalent to $\mathcal B\bs u(t)-\bs\chi(t)\in \bs M$.) If we apply the RK method to equations \eqref{eq:555.5}, and we recall that the method is stiffly accurate, we obtain that the computation of the internal stages is given by
\begin{subequations}\label{eq:555.6}
\begin{alignat}{6}
\bs U^{h,k}_n
	&=\mathbf 1\mathbf e_s^T \bs U^{h,k}_{n-1}
		+k(\mathcal Q\otimes A_\star) \bs U^{h,k}_n ,\\
\mathcal P\mathcal B \otimes \bs U^{h,k}_n &=\mathcal P\otimes \bs\chi(t_n+\mathbf c\,k).
\end{alignat}
\end{subequations}
These equations are clearly equivalent to equations \eqref{eq:555.2}, which have been shown to be equivalent to the equations satisfied by the fields obtained in the RKCQ method. In summary, we are dealing here with the direct application of the RK method to equations \eqref{eq:555.5}.

This result is now a consequence of one of the main theorems of \cite{AlPa2003}. Unfortunately the reader will be now teleported from the middle of this proof to the core of a highly technical article. We will just give a translation guide to help with the application of the results of that paper. In our context, and taking advantage of our limitation to methods with $p\ge q+1$, the key theorem of \cite{AlPa2003} is Theorem 2. This result is stated for problems with homogeneous boundary conditions, but Section 4 of \cite{AlPa2003} explains how to handle the non-homogeneous boundary conditions and why the result still holds. The following translation table 
\[
\begin{array}{r||c|c|c|c|c|c|c|c|c|c|c|}
\mbox{\cite{AlPa2003}} 
& \widetilde A & \partial & D(\widetilde A) & D(A) & S_\alpha & Z_\alpha & \rho_k(T) 
& m & \nu & \nu^* & \theta\\ 
\hline 
& & & & & & & & & & &  \\[-2ex]
\mbox{Here} 
& A_\star & \mathcal P\mathcal B & \bs D & \bs D_h & (-\infty,0] & \emptyset & 1 
& 0 & \min\{p-q-1,\frac14-\varepsilon\} & 1 & 0 
\end{array}
\]
can be used to navigate \cite[Theorem 2]{AlPa2003} and relate it to our particular problem. Note that when $p\ge q+2$, it is important to have $\bs D\subset \bs V \subset [\bs H,D(A)]_{1/4-\varepsilon}$ with bounded embeddings. This is a hypothesis needed in the application of the theorems and allows us to eliminate the estimates in terms of interpolated norms and write them in the natural Sobolev norm of $\bs V$. 
\end{proof}

We note that the error for the case $p=q+1$ can be written with the $\bs H=L^2(\RdG)^2$ norm in the right-hand-side. We will keep the $\bs V= H^1(\RdG)^2$ overestimate in this case for simplicity. 

\begin{proposition}[$L^2$ error for the internal stages]\label{prop:5.3}
With $c(\bs u^h,t_n)$ defined as in Proposition \ref{prop:5.2}, we have the bounds
\[
\| \bs U^{h,k}_n-\bs u^h(t_n+\mathbf c\,k)\|_{\Rd}
\le C k^{q+1} 
c(\bs u^h,t_n).
\]
\end{proposition}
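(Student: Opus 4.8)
The plan is to recognize this as a short consequence of the step estimate of Proposition~\ref{prop:5.2} and the $h$- and $k$-uniform resolvent bound \eqref{eq:920c}, via a one-step error recurrence for the stages. Set $\bs E_n:=\bs U^{h,k}_n-\bs u^h(t_n+\mathbf c\,k)$. First, Proposition~\ref{prop:55.1} gives $\bs U^{h,k}_n-k(\mathcal Q\otimes A_\star)\bs U^{h,k}_n=\mathbf 1\mathbf e_s^T\bs U^{h,k}_{n-1}$ together with $\mathcal B\otimes\bs U^{h,k}_n-\bs\chi(t_n+\mathbf c\,k)\in\bs M^s$. Second, since $\bs u^h$ solves $\dot{\bs u}^h=A_\star\bs u^h$ with $\mathcal B\bs u^h(t)-\bs\chi(t)\in\bs M$, a Taylor expansion of the stage values $\bs u^h(t_n+c_ik)$ (and of $\dot{\bs u}^h(t_n+c_jk)$) about $t_n$, combined with the stage-order-$q$ relations $\sum_j a_{ij}c_j^{\ell-1}=c_i^{\ell}/\ell$ ($1\le\ell\le q$) and stiff accuracy ($c_s=1$, so $\mathbf e_s^T\bs u^h(t_{n-1}+\mathbf c\,k)=\bs u^h(t_n)$), gives
\[
\bs u^h(t_n+\mathbf c\,k)-k(\mathcal Q\otimes A_\star)\bs u^h(t_n+\mathbf c\,k)
=\mathbf 1\mathbf e_s^T\bs u^h(t_{n-1}+\mathbf c\,k)+\bs d_n ,
\]
with a Runge--Kutta stage defect $\bs d_n\in\bs H^s$ satisfying $\|\bs d_n\|_{\bs H^s}\le Ck^{q+1}c(\bs u^h,t_n)$ by the standard stage-order argument (time derivatives of $\bs u^h$ entering in $\bs H$, which is dominated by the $H^1(\RdG)$ norm used in $c(\bs u^h,t_n)$). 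Subtracting the two relations, the common inhomogeneous datum $\bs\chi(t_n+\mathbf c\,k)$ cancels, so $\bs E_n$ satisfies the homogeneous transmission conditions, i.e.\ $\bs E_n\in\bs D_h^s=D(A)^s$ --- where $A_\star$ restricts to the self-adjoint dissipative generator $A$ --- and
\[
(\mathcal I\otimes I-k\mathcal Q\otimes A)\,\bs E_n=\mathbf 1\mathbf e_s^T\bs E_{n-1}-\bs d_n .
\]

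The key observation is that this recurrence need not be unrolled: the quantity $\mathbf e_s^T\bs E_{n-1}=\mathbf e_s^T\bs U^{h,k}_{n-1}-\bs u^h(t_n)$ is precisely a step error, which Proposition~\ref{prop:5.2} already bounds by $Ck^{q+1}c(\bs u^h,t_n)$ in the $\bs V$-norm --- hence \emph{a fortiori} in the weaker $\bs H$-norm (and if $p\ge q+2$ the step bound is even of order $k^{q+1+\frac14-\varepsilon}$). Inverting $\mathcal I\otimes I-k\mathcal Q\otimes A$ by means of \eqref{eq:920c} then yields
\[
\|\bs E_n\|_{\bs H^s}\le C\big(\|\mathbf e_s^T\bs E_{n-1}\|_{\bs H}+\|\bs d_n\|_{\bs H^s}\big)\le Ck^{q+1}c(\bs u^h,t_n),
\]
which is the asserted bound, $\|\cdot\|_{\bs H^s}$ being the $L^2(\RdG)^2$-norm of the stage vector.

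I expect the only points requiring genuine care to be: (i) the bookkeeping showing that the two stage relations carry the \emph{same} sampled transmission datum $\bs\chi(t_n+\mathbf c\,k)$, so that $\bs E_n$ lands in $D(A)^s$ and $\mathcal I\otimes I-k\mathcal Q\otimes A$ there agrees with the uniformly invertible operator of \eqref{eq:920c} --- this uses the identity $A_\star|_{\bs D_h}=A$ and the maximal dissipativity from Proposition~\ref{prop:4}; and (ii) the routine Taylor/stage-order estimate of $\bs d_n$, which needs only the time regularity of $\bs u^h$ already assumed in Proposition~\ref{prop:5.2}. There is no deeper difficulty: once Proposition~\ref{prop:5.2} supplies the step error, the result follows from the single resolvent inequality above, with the $(1+t_n)$ growth inherited from that proposition.
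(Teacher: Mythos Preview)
Your argument is essentially the paper's own proof: define the stage error $\bs E_n$, derive the one-step recurrence $(\mathcal I\otimes I-k\mathcal Q\otimes A)\bs E_n=\mathbf 1\mathbf e_s^T\bs E_{n-1}+k\mathcal Q\bs\Theta_n$ (your $-\bs d_n$ equals the paper's $k\mathcal Q\bs\Theta_n$), bound the defect by the stage-order-$q$ Taylor argument, invert via the uniform resolvent estimate \eqref{eq:920c}, and feed in the step error from Proposition~\ref{prop:5.2}. One small slip: Proposition~\ref{prop:5.2} bounds the step error in the $\bs H$-norm $\|\cdot\|_\Rd$, not in the $\bs V$-norm as you wrote, so the ``a fortiori'' remark is unnecessary --- but this does not affect correctness.
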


\begin{proof}
Let $\bs E_n:=\bs U^{h,k}_n-\bs u^h(t_n+\mathbf c\,k)\in \bs D_h^s=D(A)^s$ and note that
\begin{equation}\label{eq:921a}
\bs E_n-k(\mathcal Q\otimes A) \bs E_n
=\mathbf 1\mathbf e_s^T \bs E_{n-1}+k \mathcal Q \bs\Theta_n,
\end{equation}
where 
\begin{alignat*}{6}
\bs \Theta_n:= &  \dot{\bs u}^h(t_n+\mathbf c\,k)
- k^{-1} \mathcal Q^{-1} (\bs u^h(t_n+\mathbf c\,k)-
\mathbf 1\mathbf e_s^T\bs u^h(t_{n-1}+\mathbf c\,k)) \\
= & \dot{\bs u}^h(t_n+\mathbf c\,k)
- k^{-1} \mathcal Q^{-1} (\bs u^h(t_n+\mathbf c\,k)-
\mathbf 1 u^h(t_n)).
\end{alignat*}
Using Taylor expansions and the fact that a method with stage order $q$ satisfies $\ell \mathcal Q\mathbf c^{\ell-1}=\mathbf c^\ell$ for $1\le \ell\le q$ (powers of a vector are taken componentwise), we can easily prove that
\begin{equation}\label{eq:920d}
\|\bs\Theta_n\|_\Rd\le C k^q \max_{t_n\le t\le t_{n+1}} 
\left\| \tfrac{\mathrm d^{q+1}}{\mathrm dt^{q+1}}\bs u^h(t)\right\|_\Rd.
\end{equation}
Therefore, by \eqref{eq:920c}, we can bound
\begin{alignat*}{6}
\|\bs E_n\|_{\bs H_s} 
	\le & C ( \|\bs u^{h,k}_{n-1}-\bs u^h(t_{n-1})\|_\Rd+k \|\bs\Theta_n\|_\Rd)
\end{alignat*}
and the result follows by applying \eqref{eq:920d} and Proposition \ref{prop:5.2}.
\end{proof}

\begin{proposition}[$H^1$ error and estimates for boundary unknowns]
\label{prop:5.4}
With the definition of $c(\bs u^h,t_n)$ given in Proposition \ref{prop:5.2},
we have the estimates
\begin{alignat*}{6}
\|\phi^{h,k}_n-\phi^h(t_n+\mathbf c k)\|_{1/2,\Gamma}
+\| \bs U^{h,k}_n-\bs u^h(t_n+\mathbf c\, k)\|_{1,\RdG}
\le & C  k^{q+\frac12} c(\bs u^h,t_n),\\
\|\lambda^{h,k}_n-\lambda^h(t_n+\mathbf c\,k)\|_{-1/2,\Gamma}
\le & C k^q c(\bs u^h,t_n).
\end{alignat*}
\end{proposition}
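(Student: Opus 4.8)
The plan is to bootstrap from the $L^2$ stage estimate of Proposition~\ref{prop:5.3} to an $H^1$ estimate, by interpolating --- through the self-adjoint ``energy'' identity \eqref{eq:BB22} --- between that $L^2$ bound on $\bs E_n:=\bs U^{h,k}_n-\bs u^h(t_n+\mathbf c\,k)$ and a one-order-worse $L^2$ bound on $(\mathcal I\otimes A)\bs E_n$; the estimates for the boundary unknowns then fall out of the mapping properties of the trace and normal-derivative operators applied stagewise. Recall from the proof of Proposition~\ref{prop:5.3} that $\bs E_n\in D(A)^s=\bs D_h^s$ and that it satisfies the recurrence \eqref{eq:921a}.

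The first step is an $L^2$ bound on $(\mathcal I\otimes A)\bs E_n$. Multiplying \eqref{eq:921a} by $k^{-1}(\mathcal Q^{-1}\otimes I)$ gives the explicit identity
\[
(\mathcal I\otimes A)\bs E_n
=k^{-1}(\mathcal Q^{-1}\otimes I)\bigl(\bs E_n-\mathbf 1\mathbf e_s^T\bs E_{n-1}\bigr)-\bs\Theta_n .
\]
Here $\mathbf e_s^T\bs E_{n-1}=\bs u^{h,k}_{n-1}-\bs u^h(t_{n-1})$ is a step error, bounded by $Ck^{q+1}c(\bs u^h,t_n)$ thanks to Proposition~\ref{prop:5.2} and the monotonicity of $t\mapsto c(\bs u^h,t)$; Proposition~\ref{prop:5.3} gives $\|\bs E_n\|_\RdG\le Ck^{q+1}c(\bs u^h,t_n)$, and the Taylor estimate \eqref{eq:920d} gives $\|\bs\Theta_n\|_\RdG\le Ck^q c(\bs u^h,t_n)$. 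Hence $\bs E_n-\mathbf 1\mathbf e_s^T\bs E_{n-1}$ is itself $O(k^{q+1})$ in $\bs H^s$, so dividing by $k$ costs only one power and
\[
\|(\mathcal I\otimes A)\bs E_n\|_{\bs H^s}\le Ck^q c(\bs u^h,t_n) .
\]

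The second step invokes \eqref{eq:BB22} componentwise over the $s$ stages. Since $\bs E_n\in D(A)^s\subset\bs V_h^s$, testing each stage against itself gives $\sum_i |(\bs E_n)_i|_V^2=-\bigl((\mathcal I\otimes A)\bs E_n,\bs E_n\bigr)_{\bs H^s}$, and therefore, combining the two displays above with Proposition~\ref{prop:5.3},
\[
\|\nabla\bs E_n\|_\RdG^2\approx\sum_i |(\bs E_n)_i|_V^2
\le\|(\mathcal I\otimes A)\bs E_n\|_{\bs H^s}\,\|\bs E_n\|_{\bs H^s}
\le Ck^{2q+1}c(\bs u^h,t_n)^2 .
\]
Taking square roots and absorbing the smaller $L^2$ contribution $\|\bs E_n\|_\RdG\le Ck^{q+1}c\le Ck^{q+1/2}c$ yields $\|\bs U^{h,k}_n-\bs u^h(t_n+\mathbf c\,k)\|_{1,\RdG}\le Ck^{q+1/2}c(\bs u^h,t_n)$. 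Finally, the stagewise jumps of $\bs E_n$ reproduce the errors in the boundary unknowns (as in \eqref{eq:4.888} for the multistep case): the Dirichlet jump only uses the bounded trace $H^1(\RdG)\to H^{1/2}_\Gamma$, so $\|\phi^{h,k}_n-\phi^h(t_n+\mathbf c\,k)\|_{1/2,\Gamma}\le C\|\bs E_n\|_{1,\RdG}\le Ck^{q+1/2}c(\bs u^h,t_n)$, while the Neumann jump uses $H^1_\Delta(\RdG)\to H^{-1/2}_\Gamma$ and additionally requires $\|\Delta\bs E_n\|_\RdG\approx\|(\mathcal I\otimes A)\bs E_n\|_{\bs H^s}\le Ck^q c(\bs u^h,t_n)$, giving $\|\lambda^{h,k}_n-\lambda^h(t_n+\mathbf c\,k)\|_{-1/2,\Gamma}\le C(k^{q+1/2}+k^q)c\le Ck^q c(\bs u^h,t_n)$.

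The main obstacle --- and the only genuinely non-routine point --- is the $O(k^q)$ bound on $A\bs E_n$: one must resist the naive estimate $\|A\bs E_n\|\lesssim k^{-1}(\|\bs E_n\|+\dots)$, which loses a full power and, via \eqref{eq:NEW24}, would give only $k^q$ in $H^1$ and $k^{q-1}$ for $\lambda$. The saving comes from observing that $\bs E_n-\mathbf 1\mathbf e_s^T\bs E_{n-1}$, being a difference of consecutive errors each of size $O(k^{q+1})$, is still $O(k^{q+1})$, and then feeding this into the coercivity identity \eqref{eq:BB22} buys back the extra half order in the $H^1$ norm. Secondary care is needed with the Kronecker-product bookkeeping (the factor $\mathcal Q^{-1}\otimes I$ and the componentwise use of \eqref{eq:BB22}) and with the verification that $\bs E_n\in D(A)^s$, which is precisely where the stiff-accuracy structure used in Proposition~\ref{prop:55.1} enters.
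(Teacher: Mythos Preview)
Your proof is correct and follows essentially the same route as the paper's: extract $(\mathcal I\otimes A)\bs E_n$ from the recurrence \eqref{eq:921a} to get an $O(k^q)$ bound in $\bs H^s$, then use the energy identity \eqref{eq:BB22} stagewise to interpolate between $\|\bs E_n\|_{\bs H^s}=O(k^{q+1})$ and $\|(\mathcal I\otimes A)\bs E_n\|_{\bs H^s}=O(k^q)$, yielding the $H^1$ bound $O(k^{q+1/2})$; the boundary estimates follow by bounding $\bs J\bs E_n$, with the Dirichlet part controlled by the $H^1$ norm and the Neumann part requiring the additional $\|\Delta\bs E_n\|\approx\|(\mathcal I\otimes A)\bs E_n\|=O(k^q)$. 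Your Kronecker-product bookkeeping is in fact slightly more explicit than the paper's (you correctly carry the factor $\mathcal Q^{-1}\otimes I$), and your identification of where each half-order is gained or lost is accurate.
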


\begin{proof}
From \eqref{eq:921a}, we have
\[
A\otimes \bs E_n=\mathcal Q^{-1}(\mathcal Q\otimes A)\bs E_n=
k^{-1}(\bs E_n-\mathbf 1\mathbf e_s^T\bs E_{n-1})-\bs\Theta_n,
\]
and therefore
\[
\| A_\star \otimes \bs U^{h,k}_n-A_\star\otimes \bs u^h(t_n+\mathbf c\,k)\|_\RdG
=\|A\otimes \bs E_n\|_\RdG
\le C k^q c(\bs u^h,t_n),
\]
by Proposition \ref{prop:5.3} and \eqref{eq:920d}. Therefore, by \eqref{eq:BB22}
\[
|\bs U^{h,k}_n-\bs u^h(t_n+\mathbf c\,k)|_{\bs V^s}
=|\bs E_n|_{\bs V^s}
\le \| \bs E_n\|_{\bs H^s}^{1/2}\| A\otimes \bs E_n\|_{\bs H^s}^{1/2}
\le C k^{q+\frac12} c(\bs u^h,t_n).
\]
The result is therefore clear, since the errors in the boundary quantities can be derived from jump relations applied to $\bs E_n$ and the $H^1(\RdG)^2$ norm can be bounded by the sum of the $\bs V$ seminorm and the $\bs H$ norm. 
\end{proof}

\section{Numerical experiments}
\label{sect:numerics}
In order to confirm our theoretical findings, we conduct some numerical experiments in $\mathbb{R}^2$.  
For this we combine a  frequency-domain Galerkin-BEM code with the fast CQ-algorithm from~\cite{banjai_sauter_rapid_wave}. Note that a faster implementation can be achieved using the Fast and Oblivious CQ method \cite{ScLFLu2006}. 
For the discretization in space we use piecewise polynomial spaces; for $X_h$ we use discontinuous polynomials of degree $p$ and for $Y_h$ we use globally
continuous piecewise polynomials of degree $p+1$. We denote these pairings as  $\mathcal {P}_p-\mathcal{P}_{p+1}$.

\paragraph{Testing on a manufactured solution.}
We start our experiments with the case where the exact solution can be  computed analytically. This allows us to test the
predicted convergence rates from Sections~\ref{sect:multistep} and~\ref{sec:6}.

Since we are mainly interested in the performance of the CQ-schemes, i.e., the discretization in time we try to use a spatial discretization of
higher order than the time discretization, while keeping the ratio $k/h$ of timestep size and mesh-width constant. This means,
whenever we cut the timestep size in half, we perform a uniform refinement of the spatial grid. See Table~\ref{tab:expected_rates}
for the degrees used.
We note that for smooth solutions, we expect convergence rates in space  of order $\bigO(k^{p+1})$ for the 
quantity $\|\lambda- \lambda^h\|_{L^2(\Gamma)} + \|\phi -\phi^h\|_{H^{1/2}(\Gamma)}$ when using the space $\mathcal{P}_p - \mathcal{P}_{p+1}$
(see e.g.~\cite{SaSc2011}).

The domain $\Omega_-$ is the quadrialteral with vertices $(0,0),(1,0),(0.8,0.8), (0.2,1).$
The thermal transmission constants are chosen to be $m:=\rho^{-1}\kappa=0.8, \kappa:=1.2$. We can then prescribe a  solution by
picking a source point outside the polygon $\bff x^{\mathrm{sc}}=(1.5,1.6)$ and defining:
\begin{align}
  \label{eq:point_source}
  u_-(\bff x,t)={1\over 4\pi m t}\exp \left( -{|\bff x-\bff x^{\mathrm{sc}}|^2\over 4m t} \right) \qquad
  u_+(\bff x,t)=0.
\end{align}
For our computations, we solve the system up to the fixed end-time $T=4$.

When using a Runge-Kutta method, we expect a reduction of order phenomenon, which depends on the norm under consideration,
see Proposition~\ref{prop:5.4}. Since they are easier to compute, we would like to consider the $L^2$-errors for the Dirichlet and Neumann traces.
For the Dirichlet-trace $\phi$, the $L^2(\Gamma)$-norm is weaker than the $H^{1/2}_\Gamma$-norm
in Proposition~\ref{prop:5.4}. We therefore expect a slightly higher rate of convergence. Namely, if we use the multiplicative 
trace estimate (see e.g. \cite[est. (1.6.2)]{brenner_scott}) we get:
\begin{align*}
  \|{\phi(t_n) - \phi^{h,k}(t_n)}\|_{\Gamma}&\lesssim \|\bs u(t_n) - \bs u^{h,k}(t_n)\|_{0,\mathbb{R}^d}^{1/2}
   \|\bs u(t_n) - \bs u^{h,k}(t_n)\|_{1,\mathbb{R}^d \setminus \Gamma}^{1/2}.
\end{align*}
Considering the error quantity
\begin{align*}
  E^{\phi}:=\max_{\max_{j \Delta t \leq T}}{\|{\phi(t_n) - \phi^{h,k}(t_n)}\|_{\Gamma}}
\end{align*}
we therefore expect a rate of $p_{e,\phi}:=\min\left\{q+3/4,\frac{3}{4}p+\frac{q}{4}\right\}$ (up to arbitrary $\varepsilon > 0$)
for the convergence in time.

For the normal derivative, the $L^2$ norm is stronger than what is covered by our theory.
Therefore, we also include an estimation for the true $H^{-1/2}_\Gamma$-norm. We thus consider
the following two quantities:
\begin{align*}
  E^{\lambda,0}&:=\max_{j \Delta t \leq T}\|\lambda(t_j) - \lambda^{h,k}(t_j)\|_{\Gamma}, \\
  E^{\lambda,-\frac{1}{2}}&:=\max_{j \Delta t \leq T}\|\Pi_{L^2}\lambda(t_j) - \lambda^{h,k}(t_j)\|_{-1/2,\Gamma},
\end{align*}
where $\Pi_{L^2}$ denotes the $L^2$-projection onto the boundary element space $X_h$. Since the exact solution is smooth
and the space-discretization error is taken to be of higher order than the time discretization, this should give a good
estimate for the $H^{-1/2}$-error. We predict rates of $p_{e,\lambda}:=q$. 

For multistep methods there is no reduction of order phenomenon, and we expect to see the full convergence order.
We collect the expected rates in Table~\ref{tab:expected_rates}.

Compared to the experiments, our estimates in Proposition~\ref{prop:5.4} do not appear to be sharp. It should be possible to improve the convergence estimate
  for $\|{\mathrm{ A} ( \bs u^{h}(t_n) - \bs u^{h,k}(t_n))\|}_{0,\mathbb{R}^d \setminus \Gamma}$ to $\bigO(k^{q+1/4-\varepsilon})$.
  Proving this estimate is more involved and part of future work. Using such sharper estimates, it should be possible to show an improved rate of
\[
  \widetilde{p}_{e,\phi}:=\min\left\{q+1,\frac{3}{4}p+\frac{q}{4}+\frac{1}{16}\right\}
  \qquad
  \widetilde{p}_{e,\lambda}:=q+1/4,
\]
for the $L^2$-error of $\phi$ and the $H^{-1/2}_\Gamma$-error of $\lambda$ respectively. We include
these rates as conjectured in Table~\ref{tab:expected_rates}.

\begin{table}
\center
\begin{tabular}{l|c|c|c|c|c}
  Method & $p_{e,\phi}$ &$p_{e,\lambda}$ & $\widetilde{p}_{e,\phi}$  (conjectured) & $\widetilde{p}_{e,\lambda}$ (conjectured) & $X_h\times Y_h$ \\
  \hline
  Radau IIa $s=2$ & 2.75 & 2 & 2.8125 & 2.25 & $\mathcal{P}_{3} - \mathcal{P}_4$\\
  Radau IIa $s=3$ & 3.75 & 3 & 4 & 3.25 & $\mathcal{P}_{4} - \mathcal{P}_5$\\
  BDF$(2)$ & 2 & 2 & -  & - & $\mathcal{P}_{2} - \mathcal{P}_3$\\
  BDF$(4)$ & 4 & 4 & -  & - & $\mathcal{P}_{4} - \mathcal{P}_5$
\end{tabular}
\caption{Expected convergence rates and space discretization used for the different convolution quadrature methods}
\label{tab:expected_rates}
\end{table}

\begin{figure}
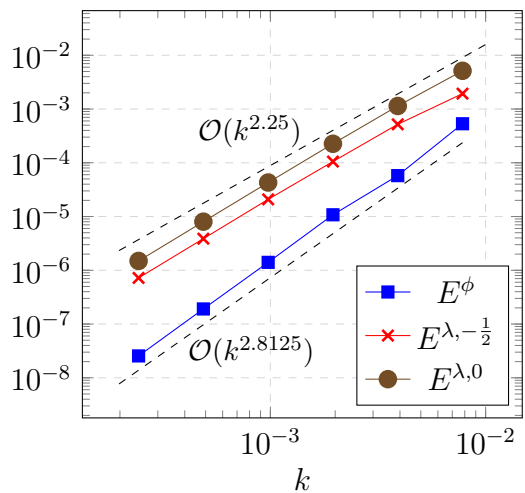
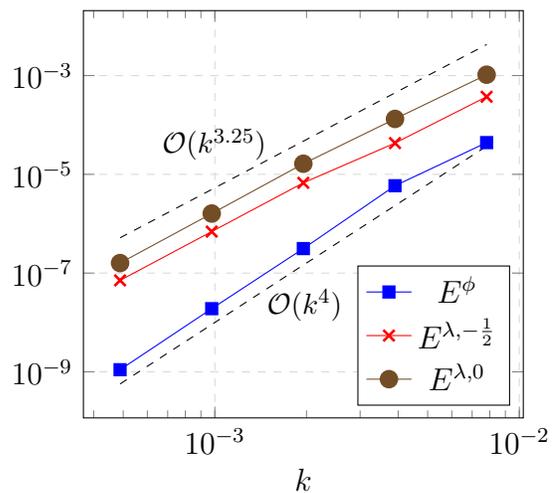
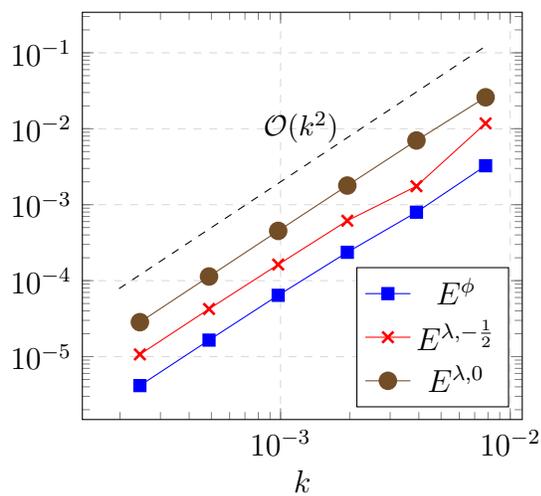
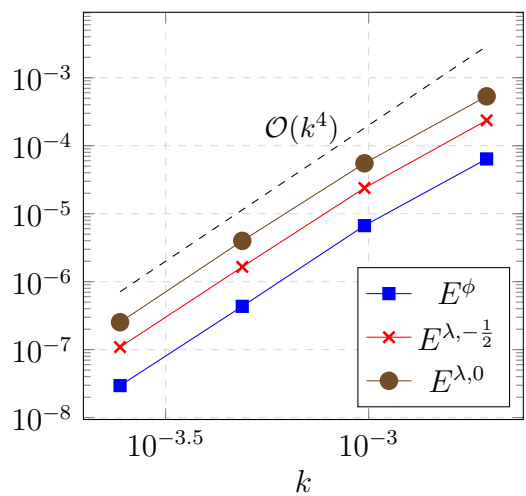

\begin{subfigure}{0.45\textwidth}
  \includetikz{conv_rk}
  %\vspace{-5mm}
  \caption{Convergence for 2-stage RadauIIa}
\end{subfigure}
\hfill
\begin{subfigure}{0.45\textwidth}
  \includetikz{conv_rk3}
  %\vspace{-5mm}
  \caption{Convergence for 3-stage RadauIIa}
\end{subfigure}
\begin{subfigure}{0.45\textwidth}
  \vspace{5mm}
  \includetikz{conv_bdf2}
  %\vspace{-5mm}
  \caption{Convergence for BDF$(2)$}
\end{subfigure}
\hfill
\begin{subfigure}{0.45\textwidth}
  \vspace{5mm}
  \includetikz{conv_bdf4}
  %\vspace{-5mm}
  \caption{Convergence for BDF$(4)$}
\end{subfigure}

\caption{Comparison of different CQ methods }
\label{fig:conv_rk_ex1}
\end{figure}

In Figure~\ref{fig:conv_rk_ex1} we observe that the Runge-Kutta based methods slightly outperform the expected rates $p_{e,\phi}$ and $p_{e,\lambda}$.
Instead we get very good correspondence to the conjectured rates $\widetilde{p}_{e,\phi}$ and $\widetilde{p}_{e,\lambda}$. 
For multistep methods we see the full convergence rate, as was predicted in Section~\ref{sect:multistep}.

\paragraph{A simulation.}
We finally illustrate the use of our method for a simulation without known analytic solution.
We choose the domain $\Omega_-$ as a horseshoe shaped polygon, with a high conductivity 
parameter $\kappa:=100$. The density $\rho$ was set to $1$. We then placed point-sources of the form
\begin{align}
  u^{\text{src}}(\bff x,t)
:=\begin{cases} \displaystyle
  \sum_{j=0}^{m}{{1\over 4\pi t}\exp \left( -{|\bff x-\bff x^{\mathrm{sc}}_j|^2\over 4t} \right) } &\text{for $\bff x \in \Omega_+$}, \\
  0 & \text{ in $\Omega_-$},
\end{cases}
\end{align}
on points $\bff x^{\mathrm{sc}}_j$ uniformly distributed on a circle. Making the ansatz for the solution $u^{\text{tot}}=u^{\text{src}} + u$,
we can compute $u$ using our numerical method and recover $u^{\text{tot}}$ by postprocessing.
See Figure~\ref{fig:simulation:t0} for the geometric setting and initial condition.
(Note: in order to avoid the singularity at $t=0$, we shifted the functions by a small time $t_{\text{lag}}:=0.001$).

We then solved the evolution problem up to the final time $T=1$. We used the BDF$(4)$ method with a step-size of $k:=1/2048$. We used
$\mathcal{P}_3 - \mathcal{P}_4$ elements with $h \approx 1/64$.
The results can be seen in Figure~\ref{fig:simulation}.

\begin{figure}
  \def\mywidth{0.3\textwidth}
  \def\picwidth{5.5cm}
  \begin{subfigure}{\mywidth}
    \center
    \includegraphics[width=\picwidth]{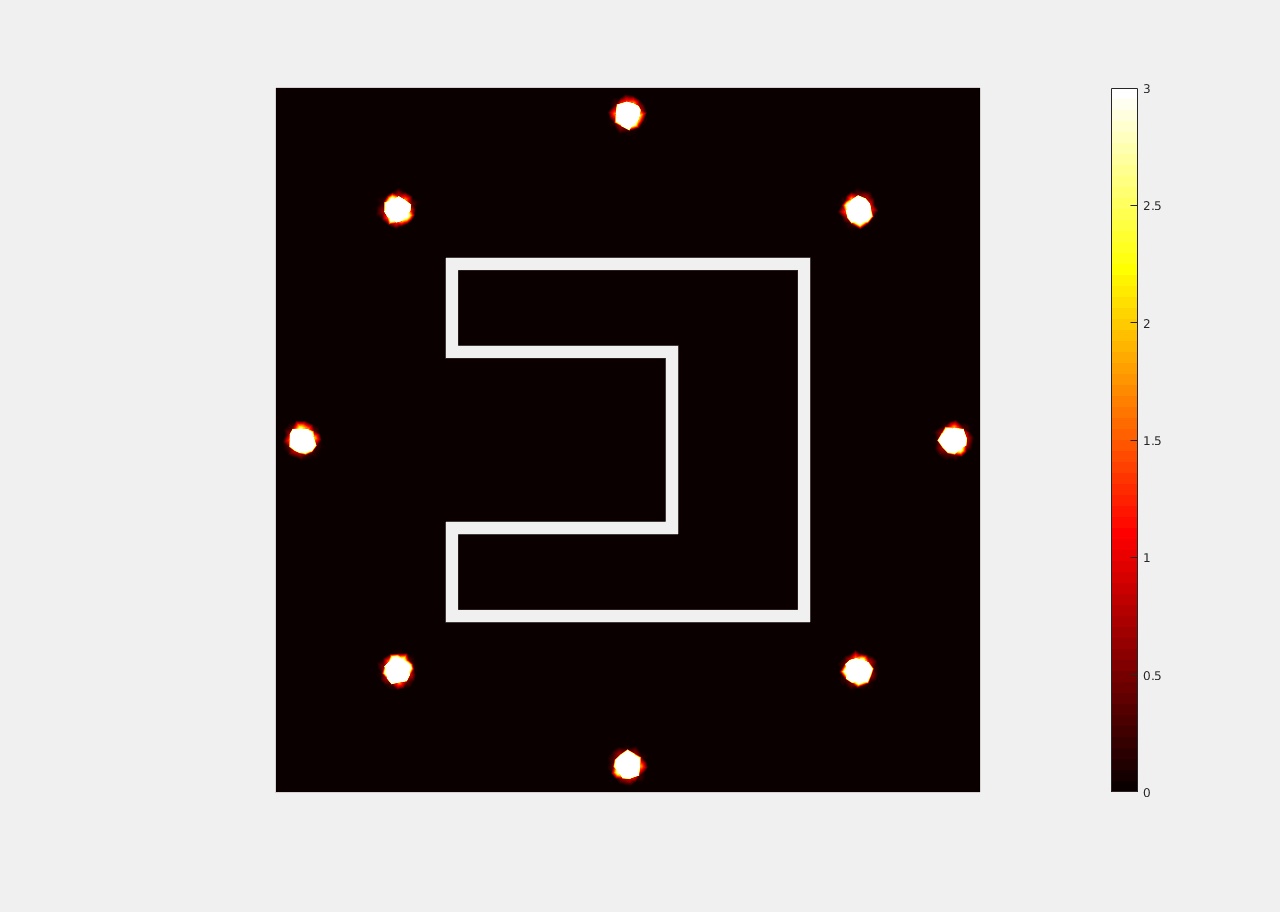}
    \caption{$t=0$}
    \label{fig:simulation:t0}
  \end{subfigure}
  \begin{subfigure}{\mywidth}
    \center
    \includegraphics[width=\picwidth]{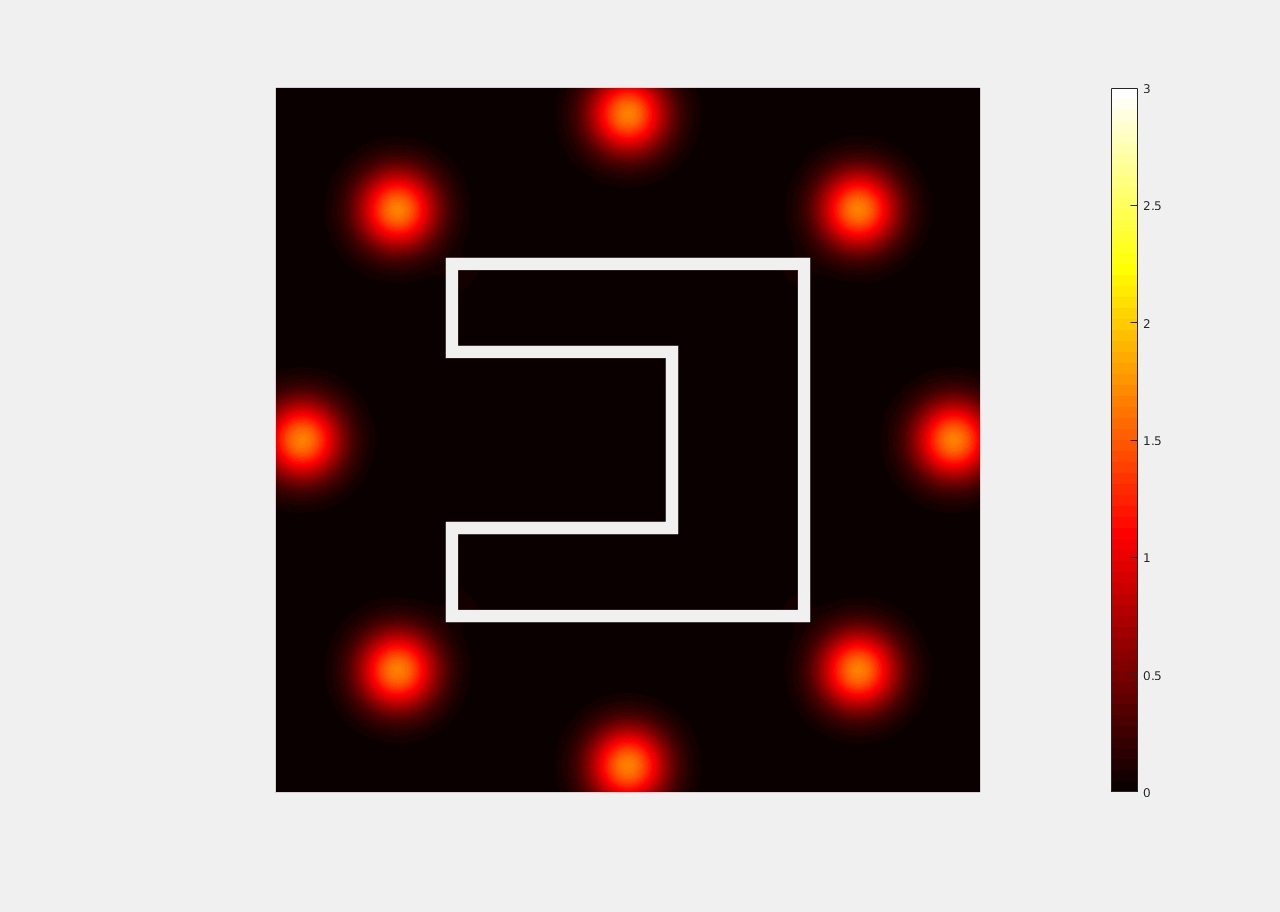}
    \caption{$t=0.06$}
  \end{subfigure}
  % \begin{subfigure}{\mywidth}
  %   \center
  %   \includegraphics[width=\picwidth]{}
  %   \caption{$t=0$}
  % \end{subfigure}
  \begin{subfigure}{\mywidth}
    \center
    \includegraphics[width=\picwidth]{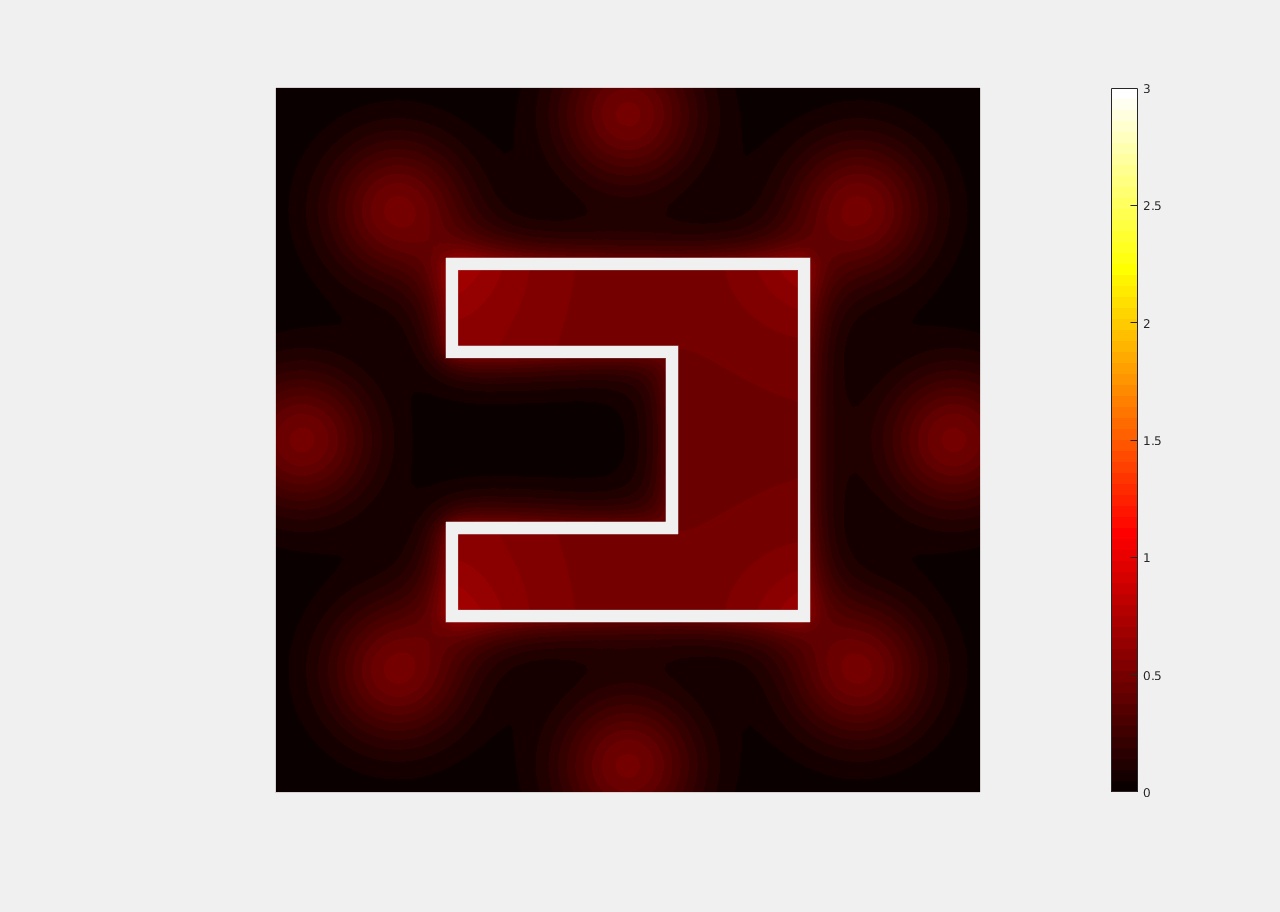}
    \caption{$t=0.2$}
  \end{subfigure}
  \hfill
  \begin{subfigure}{\mywidth}
    \center
    \includegraphics[width=\picwidth]{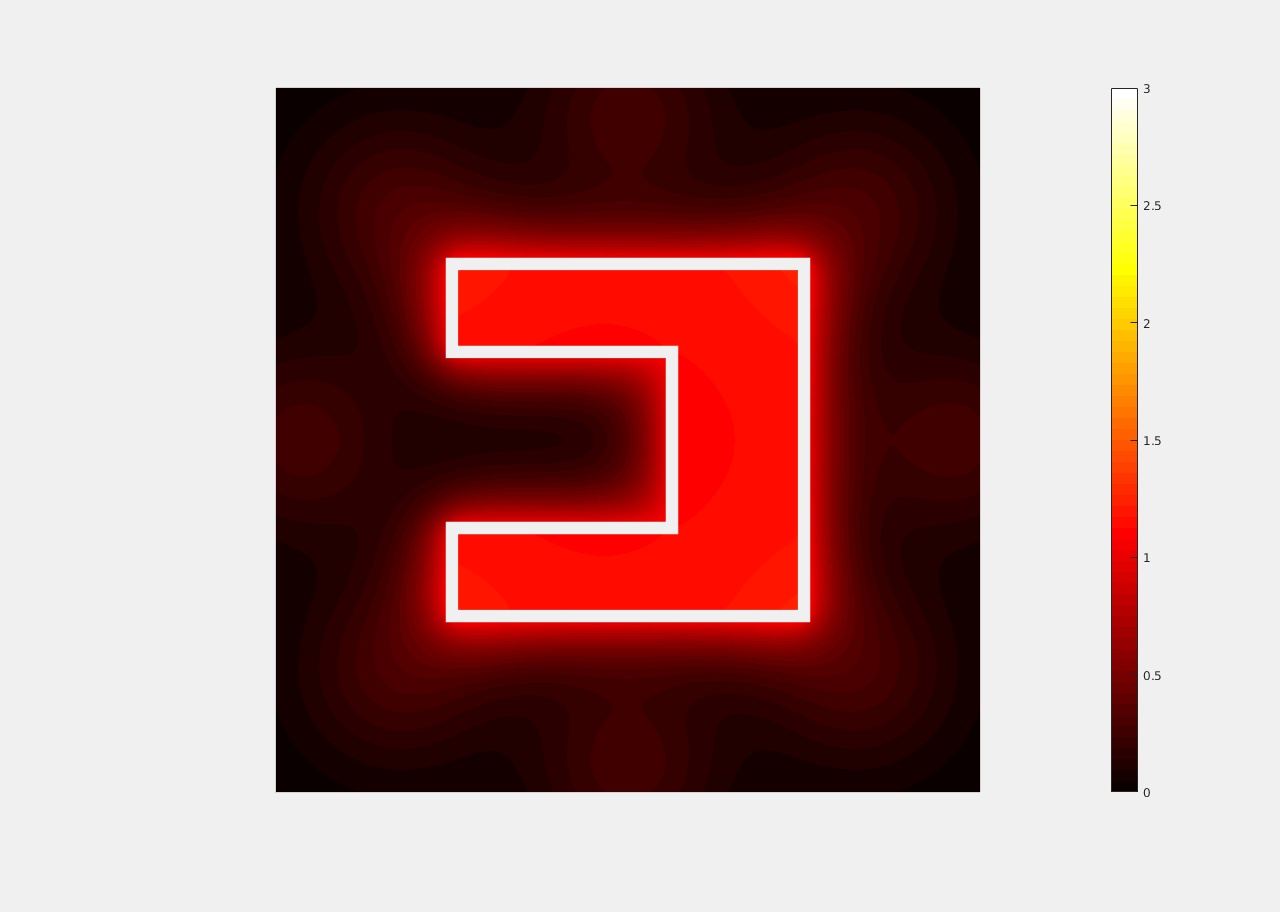}
    \caption{$t=0.375$}
  \end{subfigure}
  \hfill
  \begin{subfigure}{\mywidth}
    \center
    \includegraphics[width=\picwidth]{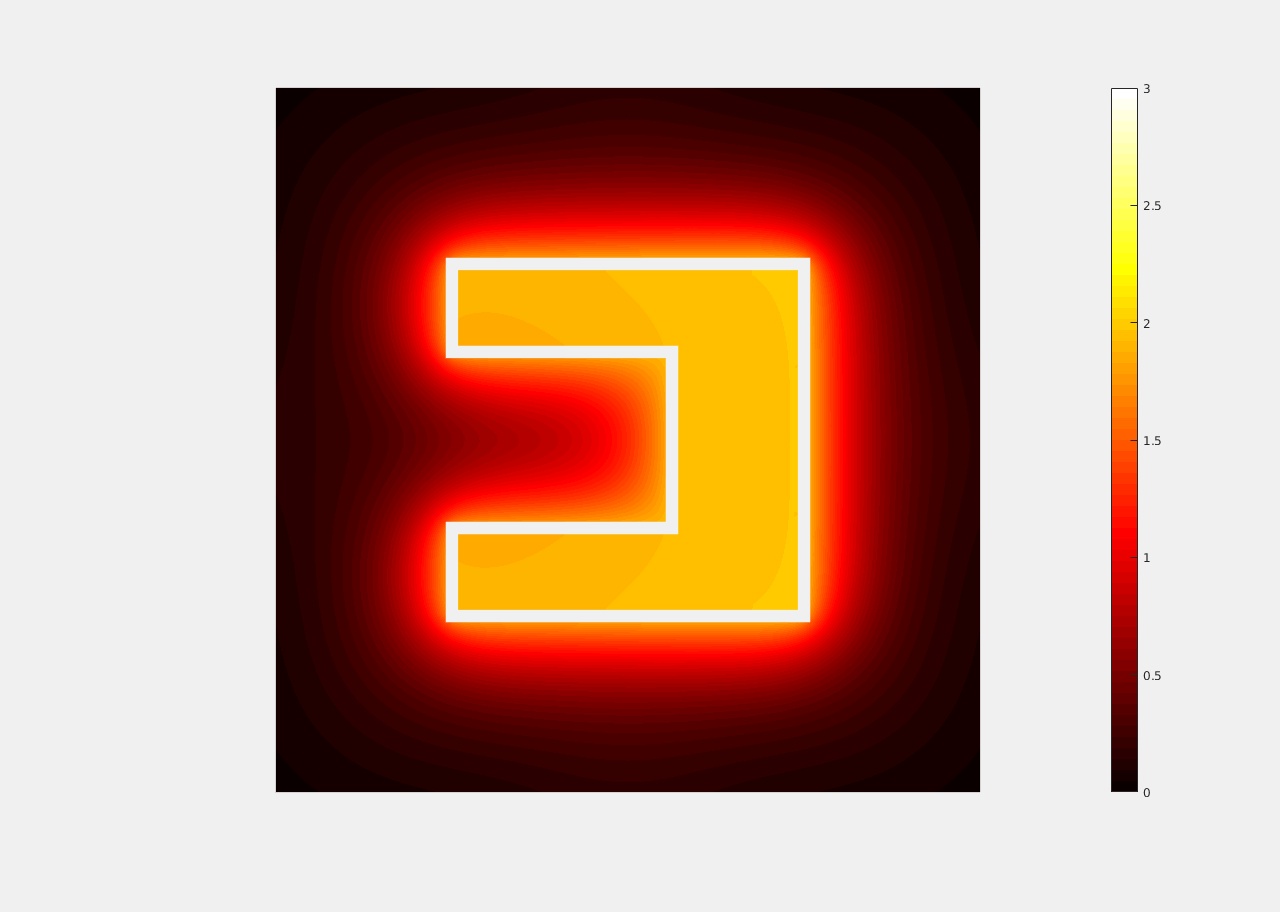}
    \caption{$t=0.75$}
  \end{subfigure}
\hfill
  \begin{subfigure}{\mywidth}
    \center
    \includegraphics[width=\picwidth]{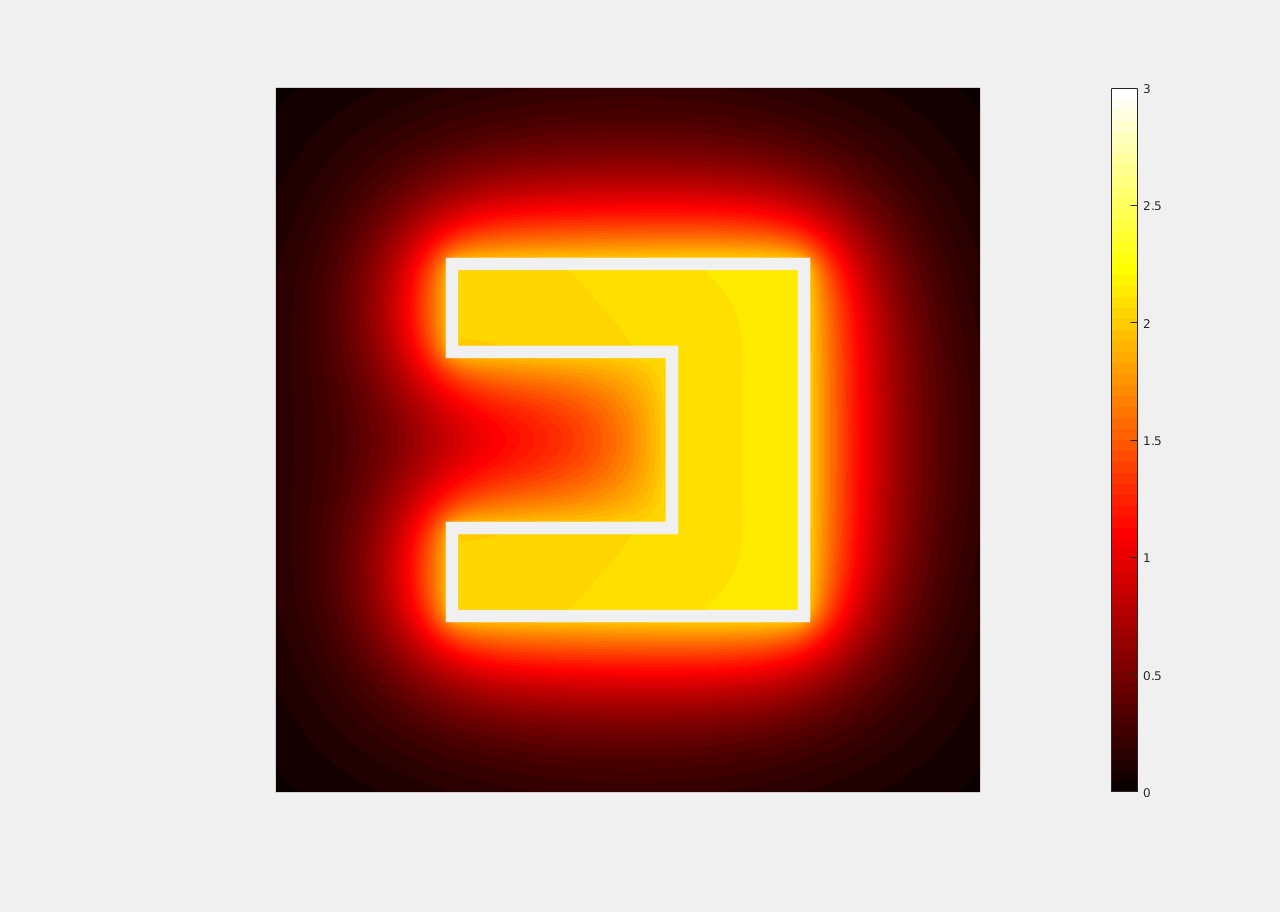}
    \caption{$t=1$}
    \label{fig:simulation:t1}
  \end{subfigure}
  \caption{Time evolution of the heat distribution}
  \label{fig:simulation}
\end{figure}

\section{Conclusions}

We have presented a collection of fully discrete methods for transmission problems associated to the heat equation in free space. The problem is reformulated as a system of time domain boundary integral equations associated to the heat kernel, thus reducing the computation work to the interface between the materials. The system is discretized with Galerkin BEM in the space variable and Convolution Quadrature (of multistep or multistage type) in time. Part of our work has consisted in dealing with the error analysis for the fully discrete method directly in the time domain, thus avoiding typically suboptimal estimates based on Laplace transforms. All the results have been presented for the case of a single inclusion, but the extension to multiple inclusions is straightforward. The case where the inclusion has piecewise constant material properties is more complicated and will be the aim of future work.

\appendix 

\bibliographystyle{abbrv}
\bibliography{Refer}

\section{Background material}\label{sec:A}

\subsection{A result on abstract evolution equations}

\begin{theorem}[Cauchy problems for analytic semigroups] \label{th:semigroup}
Let $X$ be a Hilbert space,  $B:D(B)\to X$ be self-adjoint and maximal dissipative, and let $f\in \mathcal C^\theta(\mathbb{R}_+; X)$ with $\theta\in (0,1)$ and $f(0)=0$. The nonhomogeneous initial value problem
\begin{equation} \label{eq:21}
\dot{w}_0(t) = Bw_0(t) + f(t)\quad t\geq 0,\quad w_0(0) = 0,
\end{equation}
has a unique solution $w_0\in \mathcal C^{1+\theta}(\mathbb{R}_+;X)\cap \mathcal C^\theta (\mathbb{R}_+;D(B))$ and 
\begin{equation} \label{eq:22}
\| w_0(t)\|_X \leq \int_0^t \| f(\tau )\|_X \,\mathrm{d} \tau, \qquad
\| B w_0(t)\|_X \leq  t^\theta \theta^{-1}| f|_{t,\theta, X} + 2\| f(t)\|_X.
\end{equation}
\end{theorem}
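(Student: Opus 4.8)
The plan is to work with the variation-of-constants (mild) representation and to read both bounds in \eqref{eq:22} directly off the smoothing estimates of the analytic semigroup generated by $B$. Since $B$ is self-adjoint and maximal dissipative, $-B$ is self-adjoint and non-negative, so the spectral theorem yields a contractive analytic semigroup $\{e^{tB}\}_{t\ge 0}$ on $X$ with
\[
\|e^{tB}\|_{X\to X}\le 1,\qquad \|Be^{tB}\|_{X\to X}=\sup_{\mu\ge 0}\mu\,e^{-t\mu}\le t^{-1}\quad(t>0),\qquad \|B^2e^{tB}\|_{X\to X}\le t^{-2}.
\]
I would then set $w_0(t):=\int_0^t e^{(t-\tau)B}f(\tau)\,\mathrm d\tau$. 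The first inequality in \eqref{eq:22} is immediate from $\|e^{(t-\tau)B}\|\le1$.

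For the second inequality the key is the hypothesis $f(0)=0$. Writing the constant-in-time contribution as
\[
B\!\int_0^t e^{(t-\tau)B}\,\mathrm d\tau\, f(t)=-\!\int_0^t \tfrac{\mathrm d}{\mathrm d\tau}e^{(t-\tau)B}\,\mathrm d\tau\, f(t)=(e^{tB}-I)f(t),
\]
whose norm is at most $2\|f(t)\|_X$, I would use the splitting
\[
Bw_0(t)=B\!\int_0^t e^{(t-\tau)B}\bigl(f(\tau)-f(t)\bigr)\,\mathrm d\tau+(e^{tB}-I)f(t),
\]
and bound the first integrand by $\|Be^{(t-\tau)B}\|\,\|f(\tau)-f(t)\|_X\le (t-\tau)^{-1}|f|_{t,\theta,X}(t-\tau)^\theta=|f|_{t,\theta,X}(t-\tau)^{\theta-1}$, whose integral over $(0,t)$ equals $\theta^{-1}t^\theta|f|_{t,\theta,X}$. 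Closedness of $B$ together with norm-integrability of $\tau\mapsto Be^{(t-\tau)B}(f(\tau)-f(t))$ near $\tau=t$ (here $\theta-1>-1$ is used) justifies pulling $B$ inside the integral and simultaneously shows $w_0(t)\in D(B)$. This is exactly the second bound in \eqref{eq:22}.

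It remains to establish $w_0\in\mathcal C^{1+\theta}(\mathbb R_+;X)\cap\mathcal C^\theta(\mathbb R_+;D(B))$ and that $w_0$ solves \eqref{eq:21}. This is the classical maximal-Hölder-regularity statement for analytic semigroups, and I would either cite it (e.g.\ \cite{Pazy1983}) or reprove it in the same spirit: splitting $Bw_0(t+h)-Bw_0(t)$ into the contribution of the extra interval $[t,t+h]$ and that of $[0,t]$, and using the estimates $\|Be^{\sigma B}\|\le\sigma^{-1}$, $\|B^2e^{\sigma B}\|\le\sigma^{-2}$ and the Hölder seminorm of $f$, one obtains $\|Bw_0(t+h)-Bw_0(t)\|_X\lesssim h^\theta\,(\text{data})$, hence $w_0\in\mathcal C^\theta(\mathbb R_+;D(B))$; then $\dot w_0=Bw_0+f\in\mathcal C^\theta(\mathbb R_+;X)$ gives $w_0\in\mathcal C^{1+\theta}(\mathbb R_+;X)$ and shows $w_0$ is a classical solution with $w_0(0)=0$. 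Uniqueness is the easy part: a difference of two solutions solves $\dot v=Bv$, $v(0)=0$, and dissipativity gives $\tfrac{\mathrm d}{\mathrm dt}\|v(t)\|_X^2=2\,\mathrm{Re}\,(Bv(t),v(t))\le0$, so $v\equiv0$. I expect the main obstacle to be precisely the Hölder-continuity estimate for $t\mapsto Bw_0(t)$; once the semigroup smoothing estimates are in hand, the quantitative bounds \eqref{eq:22} themselves come out cheaply.
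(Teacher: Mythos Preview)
Your proposal is correct and follows essentially the same approach as the paper: the variation-of-constants formula, the smoothing bound $\|tBe^{tB}\|\le 1$ coming from self-adjointness, and the very same splitting $Bw_0(t)=\int_0^t Be^{(t-\tau)B}(f(\tau)-f(t))\,\mathrm d\tau+(e^{tB}-I)f(t)$ to obtain the second estimate in \eqref{eq:22}. The paper simply cites \cite{Pazy1983} for the existence and $\mathcal C^{1+\theta}\cap\mathcal C^\theta(D(B))$ regularity rather than sketching it, but your outline of that part is along the standard lines; one small remark is that the hypothesis $f(0)=0$ is not actually needed for the second inequality itself, but rather to ensure the claimed H\"older regularity of $Bw_0$ down to $t=0$.
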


\begin{proof}
We will use results from \cite[Section 4.3]{Pazy1983} concerning non-homogeneous problems associated to analytic semigroups. By \cite[Chapter 4, Theorem 3.5(iii)]{Pazy1983} the initial value problem \eqref{eq:21} has a unique solution with the given regularity and this solution is given by the variation of constants formula
\[
w_0(t)=\int_0^t T(t-\tau) f(\tau)\mathrm d\tau,
\]
where $\{ T(t)\}$ is the associated contractive semigroup. Since $B$ is selfadjoint, it follows that $\| tBT(t)\|\le 1$ for all $t>0$ (see \cite[Theorem 4.5.2]{Kesavan1989} for instance). To bound the norm of $Bw_0(t)$ we proceeed as in the proof of \cite[Chapter 4, Theorem 3.2]{Pazy1983} and decompose
\begin{alignat*}{6}
Bw_0(t) & =B\int_0^t T(t-\tau)(f(\tau)- f(t))\,\mathrm{d}\tau + B\int_0^t T(\tau) f(t)\,\mathrm{d}\tau\\
&=\int_0^t B T(t-\tau)(f(\tau)- f(t))\,\mathrm{d}\tau
+T(t) f(t)-f(t).
\end{alignat*}
Since
\[
\int_0^t \| BT(t-\tau) (f(\tau)-f(t))\|_X\mathrm d\tau
\le |f|_{t,\theta,X}\int_0^t\frac1{(t-\tau)^{1-\theta}}\mathrm d\tau,
\]
the result follows easily.
\end{proof}

\subsection{BDF-CQ}\label{app:A2}

Let $\delta$ be the backward differentiation symbol introduced in \eqref{eq:4.1}.
When $\mathrm F:\mathbb C_\star\to \mathcal B(X_1,X_2)$ is an operator-valued analytic function,
\begin{equation}\label{eq:ap1}
\mathrm F(\tfrac1k \delta(\zeta)) = \sum_{n=0}^\infty \omega_n^{\mathrm F} (k) \zeta^n
\end{equation}
is given by the Taylor expansion of the left hand side about $\zeta=0$ since $\delta(\zeta)$ takes values in the domain of $\mathrm F$ for small $|\zeta|$. (This is due to the $A(\theta)$-stability of the BDF formulas of order less than or equal to six.) We thus obtain a sequence of operator-valued coefficients 
\[
\omega_n^{\mathrm F}(k)\in \mathcal B(X_1,X_2).
\]
Then given $\mathrm G(\zeta):=\sum_{n=0}^\infty g_n \zeta^n : \mathbb C_\star \to X_1$, when we multiply
\begin{equation}\label{eq:ap2}
\mathrm F (\tfrac{1}{k} \delta(\zeta)) \mathrm G(\zeta) =
\sum_{n=0}^\infty \sum_{m=0}^n \left( \omega_{n-m}^{\mathrm F}(k) g_m \right) \zeta^n,
\end{equation}
we are just computing the discrete causal convolution of the sequence of operators $\{ \omega_n^{\mathrm F}(k) \}$ to the discrete sequence $\{g_n\}$.
Computational strategies for efficient implementation of multistep CQ (in the context we find it in \eqref{eq:45}-\eqref{eq:46}) can be found in the literature. The lecture notes \cite{HaSa2016} contain a simple introduction to the topic.

\subsection{Rudiments of functional calculus} \label{app:B}

We here introduce some minimun requirements on (Dunford-Riesz) functional calculus needed for our work. First of all, here is a result concerning bounds for rational functions of non-negative self-adjoint operators. The theorem is a consequence of more general results related to functions of operators, which can be found in general introductions to functional calculus (see, for instance, \cite[Corollary 7.1.6, Theorem 2.2.3]{Haase2006}).

\begin{theorem}\label{the:A2}
Let $B:D(B)\subset X\to X$ be a self-adjoint non-negative operator in a Hilbert space $X$. Let $P,Q\in \mathcal P(\mathbb C)$ be two polynomials such that the rational function $R:=P/Q$ is bounded at infinity ($\mathrm{deg}\,P\le \mathrm{deg}\,Q$) and has all its poles in $\{s\in \mathbb C\,:\, \mathrm{Re}\,s<0\}$. Then $R(B):X\to X$ is bounded and
\[
\| R(B)\|_{X\to X}\le \sup_{z>0} |R(z)|.
\]
\end{theorem}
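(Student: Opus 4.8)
The plan is to reduce the statement to the spectral theorem for self-adjoint operators. First I would record the relevant spectral facts: since $B$ is self-adjoint and non-negative, $\sigma(B)\subseteq[0,\infty)$; since the poles of $R$ all lie in $\{\mathrm{Re}\,s<0\}$, they are disjoint from $\sigma(B)$, so $Q$ does not vanish on $\sigma(B)$ and $R$ is holomorphic on an open neighbourhood of $[0,\infty)$; and since $\deg P\le\deg Q$, the function $R$ is bounded on $[0,\infty)$ (no poles there, and a finite limit at infinity). Thus $R$ restricts to a bounded continuous function on the set $[0,\infty)\supseteq\sigma(B)$.

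Next I would make sense of $R(B)$ and identify it as a spectral integral. Because $0\notin Q(\sigma(B))$, the polynomial operator $Q(B)=\sum_k a_k B^k$ (with domain $D(B^{\deg Q})$) is boundedly invertible; and since $\deg P\le\deg Q$, the range $D(B^{\deg Q})$ of $Q(B)^{-1}$ is contained in $D(B^{\deg P})=D(P(B))$, so $R(B):=P(B)Q(B)^{-1}$ is everywhere defined. Writing $B=\int_{[0,\infty)}\lambda\,\mathrm dE(\lambda)$ for the projection-valued spectral measure $E$ of $B$, multiplicativity of the functional calculus for $B$ (applied to the bounded function $1/Q$, which satisfies $Q\cdot(1/Q)\equiv1$) gives $Q(B)^{-1}=\int 1/Q(\lambda)\,\mathrm dE(\lambda)$, and hence $R(B)=P(B)Q(B)^{-1}=\int_{[0,\infty)}R(\lambda)\,\mathrm dE(\lambda)$; equivalently, one may simply invoke the compatibility of the Dunford--Riesz holomorphic calculus with the Borel calculus on functions holomorphic near $\sigma(B)$, as in \cite[Theorem 2.2.3, Corollary 7.1.6]{Haase2006}.

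The norm bound is then immediate from the standard estimate $\bigl\|\int f\,\mathrm dE\bigr\|_{X\to X}\le\sup_{\lambda\in\sigma(B)}|f(\lambda)|$, valid for every $f$ bounded and continuous on $\sigma(B)$: applied to $f=R$ it yields $\|R(B)\|_{X\to X}\le\sup_{\lambda\in\sigma(B)}|R(\lambda)|\le\sup_{\lambda\ge0}|R(\lambda)|=\sup_{z>0}|R(z)|$, where the endpoint $\lambda=0$ may be dropped using continuity of $R$ there.

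I do not expect a serious obstacle; the content is essentially bookkeeping. The one point that genuinely uses all the hypotheses is that $\sigma(B)$ may be unbounded, so one cannot just work with the continuous functional calculus on a compact spectrum: it is precisely $\deg P\le\deg Q$ that makes $R$ bounded on the (possibly unbounded) half-line and makes $R(B)$ a bounded operator, while the location of the poles keeps $R$ finite there. If one wishes to avoid the spectral theorem, a partial-fraction expansion $R=c_0+\sum_{j,k}c_{jk}(s-p_j)^{-k}$ together with $\|(B-p_j)^{-k}\|\le\mathrm{dist}(p_j,[0,\infty))^{-k}$ also bounds $R(B)$, but the triangle inequality over the partial fractions is lossy and does not recover the sharp constant $\sup_{z>0}|R(z)|$, so the spectral-theorem route is the one to take.
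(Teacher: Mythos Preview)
Your proposal is correct and essentially matches the paper's approach: the paper does not give a detailed argument but simply defers to the same functional-calculus results you cite, namely \cite[Corollary~7.1.6, Theorem~2.2.3]{Haase2006}. Your write-up merely unpacks that citation via the spectral theorem, which is exactly the content those results encode, so there is no substantive difference.
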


We will also need the evaluation of analytic functions on matrices, using Dunford calculus. Let $\mathrm F:\mathcal O\subset \mathbb C\to X$ be an analytic function defined on a simply connected open set of $\mathbb C$. Let $\mathcal M$ be a matrix whose spectrum is contained in $\mathcal O$. We then define
\[
\mathrm F(\mathcal M):=\frac1{2\pi\imath}
\oint_C (z\mathcal I_s-\mathcal M)^{-1}\otimes \mathrm F(z)\mathrm dz,
\]
where $C$ is any simple positively oriented open contour in $\mathcal O$ surrounding the spectrum of $\mathcal M$.

\subsection{RK-CQ}\label{app:A4}

Let $\mathrm F:\mathbb C_+\to \mathcal B(X_1,X_2)$ be an operator-valued analytic function and consider the expansion \eqref{eq:ap1}, 
where now $\delta(\zeta)\in \mathbb R^{s\times s}$ is defined by \eqref{eq:deltazeta}. Since for $\zeta$ small $\delta(\zeta)$ is a small perturbation of $\mathcal Q^{-1}$ and $\mathcal Q$ has its spectrum contained in $\mathbb C_+$, then $\mathrm F(k^{-1}\delta(\zeta))$ can be defined using functional calculus as in Section \ref{app:B}. The expansion is then a simple Taylor expansion about the origin for an analytic function with values in $\mathcal B(X_1,X_2)^{s\times s}$. The coefficients of this expansion can be given by the Cauchy integrals
\[
\omega_n^{\mathrm F} (k)=\frac1{2\pi\imath}\oint_C (z\mathcal I_s-k^{-1} \mathcal Q^{-1})^{-(n+1)}\otimes \mathrm F(z)\mathrm dz 
\in \mathcal B(X_1,X_2)^{s\times s},
\]
where $C$ is any simple positively oriented closed contour in $\mathbb C_+$ surrounding the spectrum of $\mathcal Q^{-1}$. Discrete causal convolutions in the form \eqref{eq:ap2} can be made now for sequences $\{ g_n\}$ in $X_1^s$. A simple practical introduction to Dunford calculus related to RK-CQ methods can be found in \cite[Section 6.3]{HaSa2016}. Practical computational strategies involve diagonalizing $\delta(\zeta)$, cf. \cite{Banjai2010,BaSc2012,HaSa2016}.

\end{document}